\documentclass[12pt,reqno]{article}
\usepackage{amsmath, amsthm, amssymb, graphicx}
\pagestyle{myheadings}
\markright{Two-component Ginzburg--Landau}

\textwidth=6.5in
\textheight=8.8in
\hoffset=-0.5in
\voffset=-0.6in

\newcommand{\eps}{\epsilon}
\newcommand{\stwo}{\sqrt{2}}
\newcommand{\rstwo}{{1\over\stwo}}
\newcommand{\Qll}{Q_{[1,1]}}
\newcommand{\Qlo}{Q_{[1,0]}}
\newcommand{\Ill}{I_{[1,1]}}
\newcommand{\Ilo}{I_{[1,0]}}
\newcommand{\Iol}{I_{[0,1]}}
\newcommand{\Qol}{Q_{[0,1]}}

\newcommand{\om}{\omega}
\newcommand{\into}{\int_\Omega}

\newcommand{\Om}{\Omega}

\newcommand{\RR}{{\mathbb R}}
\newcommand{\ZZ}{{\mathbb Z}}
\newcommand{\NN}{{\mathbb N}}
\newcommand{\CC}{{\mathbb C}}
\newcommand{\la}{\lambda}
\newcommand{\HH}{{\bf H}}
\newcommand{\Pf}{\noindent {\it Proof: \  }}
\newcommand{\QED}{\newline $\diamondsuit$}

\renewcommand{\Im}{{\rm Im\,}}
\renewcommand{\Re}{{\rm Re\,}}

\newcommand{\LL}{{\cal L}}
\newcommand{\XL}{{\mathcal X}_{\lambda}}
\newcommand{\XXo}{{\mathcal X}_{\lambda_0}}
\newcommand{\lan}{\langle}
\newcommand{\ran}{\rangle}
\newcommand{\llan}{\left\lan}
\newcommand{\rran}{\right\ran}

\newcommand{\mul}{\mu_1(\lambda)}
\newcommand{\lab}{\lambda_{\beta}}
\newcommand{\LLL}{\mathcal{L}_\la}
\newcommand{\LGL}{\mathcal{L}^G_\la}
\newcommand{\QQQ}{\mathcal{Q}_\la}

\def\so{{\mathbb S}^1}
\newcommand{\ddd}{{\mathbb D}_1}

\def\Xint#1{\mathchoice
  {\XXint\displaystyle\textstyle{#1}}%
  {\XXint\textstyle\scriptstyle{#1}}%
  {\XXint\scriptstyle\scriptscriptstyle{#1}}%
  {\XXint\scriptscriptstyle\scriptscriptstyle{#1}}%
  \!\int}
\def\XXint#1#2#3{{\setbox0=\hbox{$#1{#2#3}{\int}$}
    \vcenter{\hbox{$#2#3$}}\kern-.5\wd0}}

\def\dashint{\Xint-}

\newtheorem{thm}{Theorem}[section]
\newtheorem{lem}[thm]{Lemma}
\newtheorem{cor}[thm]{Corollary}
\newtheorem{prop}[thm]{Proposition}
\newtheorem{rem}[thm]{Remark}
\newtheorem{defn}[thm]{Definition}

\newcommand{\be}{\begin{equation}}
\newcommand{\ee}{\end{equation}}
\newcommand{\bea}{\begin{align}}
\newcommand{\eea}{\end{align}}
\newcommand{\beann}{\begin{align*}}
\newcommand{\eeann}{\end{align*}}
\newcommand{\nnn}{\nonumber}

\begin{document}

\baselineskip=18pt

\title{On compound vortices in a two-component \\ Ginzburg--Landau functional}
\author{{\Large Stan Alama}\footnote{Dept. of Mathematics and Statistics,
McMaster Univ., Hamilton, Ontario, Canada L8S 4K1.  Supported
by an NSERC Research Grant.  {\tt alama@mcmaster.ca, bronsard@mcmaster.ca}} \and  {\Large Lia Bronsard${}^*$}
  \and {\Large Petru Mironescu}\footnote{Universit\'e de Lyon;
CNRS;
Universit\'e Lyon 1;
Institut Camille Jordan,
43 blvd du 11 novembre 1918,
F-69622 Villeurbanne-Cedex, France.  {\tt mironescu@math.univ-lyon1.fr}
}}

\maketitle

\begin{abstract}
We study the structure of vortex solutions in a Ginzburg--Landau system for two complex valued order parameters.  We consider the Dirichlet problem in the disk in $\mathbb R^2$ with symmetric, degree-one boundary condition, as well as the associated degree-one entire solutions in all of $\RR^2$.  Each problem has degree-one equivariant solutions with radially symmetric profile vanishing at the origin, of the same form as the unique (complex scalar) Ginzburg--Landau minimizer.  We find that there is a range of parameters for which these equivariant solutions are the unique locally energy minimizing solutions for the coupled system.  Surprisingly, there is also a parameter regime in which the equivariant solutions are unstable, and minimizers must vanish separately in each component of the order parameter.
\end{abstract}

\section{Introduction}

We continue our study of the structure of vortices in two-component Ginzburg--Landau functionals begun in \cite{AB2,ABM}.  Let $\Omega\subset\RR^2$ be a smooth, bounded domain, and
$\Psi\in H^1(\Omega;\CC^2)$.  We define an energy functional,
\begin{equation}\label{EOmega}   
   E_\eps (\Psi;\Omega) = \into \left\{ \frac12|\nabla\Psi|^2 
         + {1\over 4\eps^2} \left(|\Psi|^2-1\right)^2
             + {\beta\over 4\eps^2} \left( |\psi_+|^2 -|\psi_-|^2\right)^2 \right\}\, dx,
\end{equation}
where $\Psi=(\psi_+,\psi_-)$, $\beta>0$ and $\eps>0$ are parameters.
Energy functionals of a  form similar to $E_\eps$ have been introduced in physical models, and at the end of the section we will briefly describe two such contexts:  a Spinor Ginzburg--Landau functional, describing ferromagnetic and antiferromagnetic superconductors, and giving rise to half-integer degree vortices; and a Gross-Pitaevskii functional for a two-component Bose-Einstein condensate.  Although the physical models are more complex, we expect that the essential features of the singular limit $\eps\to 0$ in the physical systems will be well described by the simpler energy \eqref{EOmega}.

As a model problem, we consider \eqref{EOmega} with appropriate Dirichlet boundary conditions, and study the behavior of energy minimizers as $\eps\to 0$.  In the limit, minimizers $\Psi$ should lie on the manifold in $\mathbb{C}^2$ on which the potential $F(\Psi)=\frac14(|\Psi|^2-1)^2 + {\beta\over 4}(|\psi_+|^2-|\psi_-|^2)^2$ vanishes.  That manifold is a 2-torus $\Sigma\subset \mathbb{S}^3\subset\mathbb{C}^2$, parametrized by two real phases  
$$\Psi = \left({1\over\sqrt{2}}e^{i\alpha_+},{1\over\sqrt{2}} e^{i\alpha_-}\right),
$$
and thus a $\Sigma$-valued map $\Psi(x)$ carries a pair of integer-valued degrees around any closed curve $C$, 
$$  \deg(\Psi; C)= [n_+,n_-],\qquad  n_+ = \deg(\psi_+; C) \qquad  n_- = \deg(\psi_-; C).  $$
If the given Dirichlet boundary condition has nonzero degree in either component, then there is no finite energy map $\Psi$ which takes values in $\Sigma$ and satisfies those boundary conditions, and we expect that vortices will be created in the $\eps\to 0$ limit, just as in the classical Ginzburg--Landau model \cite{BBH2}.

An analysis of the global minimizers of the Dirichlet problem, with $\Sigma$-valued boundary data, is given in \cite{AB1,AB2}.  As in the seminal work of Bethuel, Brezis, \& H\'elein \cite{BBH2}, nonzero degree boundary data give rise to vortices in $\Omega$, each of degree one in one (or both) of the two phases $\alpha_\pm$, and the location of the vortices is determined by minimizing a renormalized energy, which is derived by sharp estimates of the interaction energy between the vortices.  The essential difference between the classical Ginzburg--Landau model and the energy \eqref{EOmega} is that there are different species of vortices, allowing for winding in one or both of the two phases, $\alpha_\pm$.  In the renormalized energy expansion, vortices with winding in different components do not interact directly, in the sense that there is no term in the renormalized energy which couples the location of the $\alpha_+$ and $\alpha_-$ vortices.  However, we discovered that there is a very short-range interaction between these two species due to the energy of the vortex cores.  In particular, it is shown (for certain values of the parameter $\beta$,) that it may be beneficial for two vortices of different type (one with degree $[n_+,n_-]=[1,0]$ and one with $[n_+,n_-]=[0,1]$) to coincide in the $\eps\to 0$ limit, rather than to converge to distinct points (as the renormalized energy would normally dictate.)  In this paper, we study the finer structure of these compound vortices:  for $\eps>0$ small we ask, do they resemble Ginzburg--Landau vortices, with $|\Psi|=0$ at a common vortex location; or does each component $\psi_\pm$ vanish separately?

\medskip

To illustrate, we begin with a simple but prototypical example: let $\Omega=\ddd=D(0,1)$, the unit disk, and denote
$$  E_\eps(\Psi)= E_\eps(\Psi; \ddd).  $$
We consider minima (or more generally, critical points) of $E_\eps$ over the space $\HH$, consisting of all functions $\Psi\in H^1(\ddd;\CC^2)$ with the symmetric boundary condition: 
\be\label{sym}   \Psi|_{\partial \ddd} 
              = {1\over\sqrt{2}}\left(e^{i\theta}, e^{i\theta}\right).
\ee
The degree on the boundary is $[n_+,n_-]=[1,1]$, and the energy expansion of \cite{AB2} shows that the minimizers $\Psi_\eps$ converge to a $\Sigma$-valued harmonic map in $\ddd$ with a single limiting vortex at the origin.  Thus, minimizers produce a compound vortex with winding in each phase near the origin.  For $\eps>0$ small but nonzero, do the zeros of the two components coincide or not?  It is easy to verify that if $u_\eps$ minimizes the classical Ginzburg--Landau energy with symmetric boundary condition,
$$  G_\eps(u)=\int_{\ddd} \left[ \frac12 |\nabla u|^2 + {1\over 4\eps^2}(|u|^2-1)^2\right],  \qquad
u|_{\partial \ddd}=e^{i\theta},
$$
then $U_\eps={1\over\sqrt{2}}(u_\eps,u_\eps)$ is a critical point of $E_\eps$ with boundary data \eqref{sym}.  Is the Ginzburg--Landau minimizer $U_\eps$  minimizing for $E_\eps$?

We prove the following:

\begin{thm}\label{thm1}
Let $u_\eps$ minimize $G_\eps$ with $u_\eps|_{\partial \ddd}=e^{i\theta}$ and
$$  U_\eps={1\over\sqrt{2}}(u_\eps(x),u_\eps(x)).  $$
\begin{enumerate}
\item[(i)]  If $\beta\ge 1$, then $U_\eps$ minimizes $E_\eps$ with Dirichlet condition \eqref{sym} for \underbar{every} $\eps>0$.
\item[(ii)]  If $0<\beta<1$, then for all sufficiently small $\eps>0$, $U_\eps$ is \underbar{not} the minimizer of $E_\eps$ with boundary condition \eqref{sym}.
 \end{enumerate}
\end{thm}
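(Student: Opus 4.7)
For part (i), the key observation is that the potential completely decouples at $\beta = 1$: a direct expansion gives the pointwise identity
\[
\tfrac14(|\Psi|^2 - 1)^2 + \tfrac14(|\psi_+|^2 - |\psi_-|^2)^2 \;=\; \tfrac12\bigl((|\psi_+|^2 - \tfrac12)^2 + (|\psi_-|^2 - \tfrac12)^2\bigr).
\]
Hence for $\beta = 1$ the functional splits as $E_\eps(\Psi) = J_\eps(\psi_+) + J_\eps(\psi_-)$ with $J_\eps(\phi) = \int_{\ddd}[\tfrac12|\nabla\phi|^2 + \tfrac{1}{2\eps^2}(|\phi|^2 - \tfrac12)^2]$. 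After the rescaling $\tilde\phi = \sqrt2\,\psi_\pm$, which sends the Dirichlet datum of each component to $e^{i\theta}$, one has $J_\eps(\psi_\pm) = \tfrac12 G_\eps(\tilde\phi) \ge \tfrac12 G_\eps(u_\eps)$, so $E_\eps^{\beta=1}(\Psi) \ge G_\eps(u_\eps) = E_\eps^\beta(U_\eps)$. For $\beta > 1$ the extra term $\tfrac{\beta-1}{4\eps^2}\int(|\psi_+|^2 - |\psi_-|^2)^2$ is nonnegative and vanishes at $U_\eps$, so (i) follows by monotonicity in $\beta$.

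For part (ii) I will prove the stronger statement that $U_\eps$ is a strictly unstable critical point for small $\eps$. Since $u_\eps$ solves the Ginzburg--Landau equation and the coupling term vanishes at $U_\eps$, the first variation is zero; I expand $E_\eps$ in the antisymmetric direction $\Phi = (\phi,-\phi)$ with $\phi \in H^1_0(\ddd;\CC)$. The cross terms in $|\nabla\Psi|^2$ and $|\Psi|^2$ cancel, while $|\psi_+|^2 - |\psi_-|^2 = 2\sqrt 2\, t\,\Re(u_\eps\bar\phi)$, producing
\[
E_\eps(U_\eps + t\Phi) - E_\eps(U_\eps) = t^2\, Q_\beta(\phi) + O(t^4),
\quad
Q_\beta(\phi) := \int_{\ddd}\!\Bigl[|\nabla\phi|^2 + \tfrac{|u_\eps|^2 - 1}{\eps^2}|\phi|^2 + \tfrac{2\beta}{\eps^2}(\Re(u_\eps\bar\phi))^2\Bigr].
\]
Letting $\tilde Q$ denote the second variation of $G_\eps$ at $u_\eps$, which is nonnegative on $H^1_0$ by minimality of $u_\eps$, the two forms are related by $Q_\beta = 2\tilde Q - \tfrac{2(1-\beta)}{\eps^2}\int(\Re(u_\eps\bar\phi))^2$. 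For $\beta < 1$ it therefore suffices to find a test function $\phi_\eps$ satisfying $\tilde Q(\phi_\eps) = O(1)$ while $\eps^{-2}\int(\Re(u_\eps\bar\phi_\eps))^2 \to +\infty$.

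The natural candidate is $\phi_\eps = \chi(|x|)\,\partial_x u_\eps$, where $\chi \in C_c^\infty(\ddd)$ equals $1$ on $B_{1/2}$. Differentiating the Ginzburg--Landau equation in $x$ shows that $\partial_x u_\eps$ lies in the kernel of the linearized operator $L_\eps\phi = -\Delta\phi + \eps^{-2}(|u_\eps|^2-1)\phi + 2\eps^{-2}\Re(u_\eps\bar\phi)\,u_\eps$; so only commutators with $\chi$ contribute, giving
\[
\tilde Q(\phi_\eps) = -\int_{\ddd} \chi\,\Re\bigl(\nabla\chi\cdot\nabla(\partial_x u_\eps)\,\overline{\partial_x u_\eps}\bigr) - \tfrac12\int_{\ddd} \chi\,\Delta\chi\,|\partial_x u_\eps|^2.
\]
These integrands are supported in the annular region where $\chi$ transitions, which is uniformly separated from the vortex core; standard decay estimates for $u_\eps$ yield $|\partial_x u_\eps| + |\nabla\partial_x u_\eps| = O(1)$ there, so $\tilde Q(\phi_\eps) = O(1)$. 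On the other hand $\Re(u_\eps\,\overline{\partial_x u_\eps}) = \tfrac12\partial_x|u_\eps|^2$, and the change of variables $r = \eps s$ together with the entire Ginzburg--Landau profile $f$ satisfying $f(s) \to 1$ and $f'(s) = O(s^{-3})$ at infinity shows that $\int_{\ddd}\chi^2(\partial_x|u_\eps|^2)^2 \to 4\pi c_0$ for a positive constant $c_0 = \int_0^\infty f^2(f')^2\,s\,ds$. Consequently $\eps^{-2}\int(\Re(u_\eps\bar\phi_\eps))^2 \ge \pi c_0/\eps^2$ for small $\eps$, and $Q_\beta(\phi_\eps) \le C(\chi) - 2\pi c_0(1-\beta)/\eps^2 < 0$ for $\eps$ sufficiently small.

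The principal obstacle is making $\tilde Q(\phi_\eps) = O(1)$ rigorous: one needs $\eps$-independent $C^1$ bounds on $u_\eps$ on the region $\{|x| \ge 1/2\}$, equivalently an asymptotic of the form $u_\eps(x) = e^{i\theta}\bigl(1 + O(\eps^2/|x|^2)\bigr)$ away from the core. These are standard for the radially symmetric Ginzburg--Landau minimizer in $\ddd$ and follow from elliptic regularity combined with the convergence of the rescaled problem to the entire vortex equation on $\RR^2$; I would invoke them rather than redo the argument.
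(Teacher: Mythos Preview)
Your argument for (i) is correct and is essentially the paper's own proof (Lemma~2.1): both rest on the identity
\[
(|\Psi|^2-1)^2 + \beta(|\psi_+|^2-|\psi_-|^2)^2
= 2\bigl[(|\psi_+|^2-\tfrac12)^2 + (|\psi_-|^2-\tfrac12)^2\bigr] + (\beta-1)(|\psi_+|^2-|\psi_-|^2)^2,
\]
which decouples the energy at $\beta=1$ and gives monotonicity for $\beta>1$.

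For (ii) your proof is correct but follows a genuinely different route from the paper's Section~2. The paper argues via \emph{core energies}: using the test function $(\tfrac{1}{\sqrt2}u_\eps,\tfrac{1}{\sqrt2})$ for the $[1,0]$ problem together with the inequality $Q_{[1,1]}\le 2Q_{[1,0]}$ (imported from \cite{AB2}), they obtain $Q_{[1,1]}(\beta)\le Q_{GL}-(1-\beta)\tfrac{\pi}{4}$, so $I_{[1,1]}(\eps;\beta)<I_{GL}(\eps)=E_\eps(U_\eps)$ for small $\eps$. You instead prove the stronger statement that $U_\eps$ is linearly \emph{unstable}, by testing the second variation in the antisymmetric direction with $\phi_\eps=\chi\,\partial_x u_\eps$; since $\partial_x u_\eps$ lies in the kernel of the Ginzburg--Landau linearization, only commutator terms survive and $Q_\beta(\phi_\eps)\to-\infty$. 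This is essentially the mechanism the paper exploits much later, in Section~5 (Lemma~5.13), where the test pair $(A,B)=(F_\infty',F_\infty/r)$---which is exactly the Fourier decomposition of $\partial_x u$---is used to show the ground-state eigenvalue of the linearization becomes negative. So your approach is more self-contained (no reliance on \cite{AB2}) and yields instability rather than mere non-minimality; the paper's route, on the other hand, gives a quantitative energy gap $(1-\beta)\pi/4$ and feeds directly into the later spin estimates (Corollary~2.5, Theorem~1.2(ii)).

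One small slip: with the standard normalization of the second variation one has $Q_\beta=\tilde Q - \tfrac{2(1-\beta)}{\eps^2}\int(\Re(u_\eps\bar\phi))^2$, not $2\tilde Q$; this does not affect the argument. Also, for the commutator bound you need uniform $C^2$ (not just $C^1$) control of $u_\eps$ on $\{|x|\ge 1/2\}$, but as you note this is standard for the radial minimizer.
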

The proof of Theorem~\ref{thm1} is based on comparisons between the vortex core energies (see \eqref{cores}), and on our previous results in \cite{AB1,ABM}, and  is the content of Section~\ref{bvpsec}.
For $0<\beta<1$, the form of minimizers is more complex and more interesting.  Our results (see Corollary~\ref{cor5} and Lemma~\ref{lem6},) suggest that the compound $[n_+,n_-]=[1,1]$ vortex  which appears in the $\eps\to 0$ limit actually breaks down into two distinct simple vortices for $\eps>0$.  

\medskip

Although this example seems quite special, in fact the symmetric minimization problem plays an important role in the expansion of the energy in the method of \cite{BBH2}, and we expect that minimizers of this problem accurately describe the structure of vortices near the vortex core.  

Another approach to the core structure of vortices is obtained by blowing up the solution at scale $\eps$ around the vortex center.  After rescaling and passing to the limit, one obtains an entire solution in all of $\RR^2$ to 
the elliptic system,
\begin{equation} \label{eqns} \left.
\begin{gathered}
-\Delta \psi_+ = (1-|\Psi|^2)\psi_+ + \beta (|\psi_-|^2-|\psi_+|^2)\psi_+, \\
-\Delta \psi_- = (1-|\Psi|^2)\psi_- - \beta (|\psi_-|^2-|\psi_+|^2)\psi_-.
\end{gathered} \right\}
\end{equation}
Solutions to (\ref{eqns}) obtained by blowing up minimizers in $\Omega$ will satisfy 
an integrability condition,
\be\label{BMRcond}
\int_{\RR^2} \left\{ \left(|\Psi|^2-1\right)^2 + 
    \beta \left( |\psi_-|^2-|\psi_+|^2\right)^2\right\} dx<\infty,
\ee
analogous to the condition of \cite{BMR} for the classical Ginzburg--Landau equation.  While they have infinite energy measured in the whole of $\RR^2$,  they do inherit a local energy minimizing property, identified by De Giorgi.  For $\Psi\in H^1_{loc}(\RR^2; \mathbb C^2)$ satisfying \eqref{BMRcond}, we denote
\be\label{E}
E(\Psi;\Omega) := E_1(\Psi;\Omega) =
   \int_\Omega \left\{ \frac12 |\nabla\Psi|^2 + \frac14\left(|\Psi|^2-1\right)^2 + {\beta\over 4} \left( |\psi_-|^2 - |\psi_+|^2\right)^2
   \right\}.
\ee

%define a localized energy in any domain $\Omega\subset\RR^2$, 
%\begin{equation}\label{enloc}
% E(\Psi; \Omega) = \int_\Omega \left\{\frac12 |\nabla\Psi|^2
%  + {1\over 4} (|\Psi|^2-1)^2 + {\beta\over 4} (|\psi_+|^2-|\psi_-|^2)^2\right\}.
%\end{equation}

\begin{defn}\label{deGiorgi}
We say that $\Psi$ is a \underbar{locally minimizing}
solution of (\ref{eqns}) if (\ref{BMRcond}) holds 
and if for every bounded  regular domain $\Om\subset\RR^2$,
$$  E(\Psi;\Om)\le E(\Phi;\Om)  $$
holds for every $\Phi=(\varphi_+,\varphi_-)\in H^1(\Om;\CC^2)$
with $\Phi|_{\partial\Om}=\Psi|_{\partial\Om}$.
\end{defn}
For the classical Ginzburg--Landau equation in $\RR^2$, 
$$   -\Delta u = (1-|u|^2)u,  $$
the locally minimizing solutions are completely known.  Combining results by Shafrir \cite{Sh}, Sandier \cite{Sa}, and Mironescu \cite{M2}, the unique nontrivial locally minimizing solution is (up to symmetries) the degree-one equivariant solution, $u=f(r)e^{i\theta}$.

In \cite{ABM} we proved several results on the entire solutions of \eqref{eqns}.  Following the work of \cite{BMR} on the Ginzburg--Landau equations, any solution of \eqref{eqns} satisfying \eqref{BMRcond} has a degree pair at infinity,  (see \cite{ABM}),
$n_\pm = \deg(\psi_\pm;\infty)=\deg\left( {\psi_\pm\over |\psi_\pm|}; S_R\right)$ for all
sufficiently large radii $R$.  We also showed that there exists a unique equivariant solution to \eqref{eqns} for each degree pair $[n_+,n_-]$, but as in the Ginzburg--Landau case we do not expect all those solutions to be local minimizers.  Indeed, it is only for the simplest, ``ground state'' degrees $[n_+,n_-]=[1,0]$ or $[0,1]$ that we can assert the existence of locally minimizing entire solutions.  In \cite{ABM} we show that these vortex solutions  are ``coreless'', that is, $|\Psi|$ is bounded away from zero in $\RR^2$.

For degrees $[1,1]$ at infinity, it is not clear whether or not a locally minimizing solution exists for \eqref{eqns}.  For instance, it is easy to verify that, if $u=f(r)e^{i\theta}$ is the symmetric, degree one solution of the Ginzburg--Landau equations in $\RR^2$, then $U=\left( {1\over\sqrt{2}} u, {1\over\sqrt{2}} u\right)$ solves \eqref{eqns} with degrees $[n_+,n_-]=[1,1]$.  We then ask:  is $U$ a locally minimizing solution, and are there any others?

We prove the following result:
  
\begin{thm}\label{thm2}
\begin{enumerate}
\item[(i)]
For $\beta>1$, $\Psi^*$ is a locally minimizing solution with degree $\deg(\psi^*_\pm,\infty)=1$ if and only if 
\be\label{GLsol}   \Psi^*={1\over\sqrt{2}}
\left( u(x-a)e^{i\phi_+}, u(x-a)e^{i\phi_-}\right),  
\ee
where $\phi_\pm$ are real constants, $a\in \RR^2$ is constant, and $u(x)$ is the (unique) equivariant solution to the Ginzburg--Landau equation in $\RR^2$ with 
$\deg(u,\infty)=1$.
\item[(ii)]  If $0<\beta<1$ and $\Psi^*$ is a locally minimizing solution with degree $\deg(\psi^*_\pm,\infty)=1$, then $|\Psi^*|$ is bounded away from zero in $\RR^2$.  Moreover,
\be\label{S2}
  \int_{\RR^2} \left( |\psi_+|^2 - |\psi_-|^2\right)^2 \ge \pi.  
\ee
\end{enumerate}
\end{thm}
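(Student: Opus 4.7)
The plan rests on the algebraic identity
\[
\tfrac{1}{4}(|\Psi|^2-1)^2+\tfrac{\beta}{4}(|\psi_+|^2-|\psi_-|^2)^2=\tfrac{1}{2}(|\psi_+|^2-\tfrac{1}{2})^2+\tfrac{1}{2}(|\psi_-|^2-\tfrac{1}{2})^2+\tfrac{\beta-1}{4}(|\psi_+|^2-|\psi_-|^2)^2,
\]
which, setting $v_\pm:=\stwo\,\psi_\pm$, rewrites the functional as $E(\Psi;\Om)=\tfrac{1}{2}G(v_+;\Om)+\tfrac{1}{2}G(v_-;\Om)+\tfrac{\beta-1}{4}\into(|\psi_+|^2-|\psi_-|^2)^2$, where $G$ is the classical Ginzburg--Landau functional. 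The sign of $\beta-1$ drives everything, and \eqref{BMRcond} makes the splitting legitimate.

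For part (i), given a locally minimizing $\Psi^*$ with degrees $[1,1]$, I would compare on $B_R$ with the shifted-symmetric $\tilde U(x)=\rstwo(u(x-a)e^{i\phi_+},u(x-a)e^{i\phi_-})$, choosing $a,\phi_\pm$ from the asymptotics of $\Psi^*$ and patching in a thin annulus at $o(1)$ cost so that $\tilde U|_{\partial B_R}=\Psi^*|_{\partial B_R}$. The Bethuel--Brezis--H\'elein lower bound \cite{BBH2} applied to each $v_\pm^*=\stwo\,\psi_\pm^*$ gives
\[
E(\Psi^*;B_R)\geq\pi\log R+\gamma_{GL}+\tfrac{\beta-1}{4}\int_{B_R}(|\psi_+^*|^2-|\psi_-^*|^2)^2+o(1),
\]
whereas $E(\tilde U;B_R)=\pi\log R+\gamma_{GL}+o(1)$. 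Since $\beta>1$, local minimality forces $|\psi_+^*|\equiv|\psi_-^*|$. With this identity the system \eqref{eqns} collapses to the classical Ginzburg--Landau equation $-\Delta v_\pm^*=(1-|v_\pm^*|^2)v_\pm^*$ for each component; the classification of degree-one entire GL solutions \cite{BMR,Sh,Sa,M2} then yields $v_\pm^*(x)=u(x-a_\pm)e^{i\phi_\pm}$, and the equality $|v_+^*|=|v_-^*|$ forces $a_+=a_-$, giving \eqref{GLsol}.

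For part (ii), if $|\Psi^*(x_0)|=0$, the rescaled map $\hat\Psi_R(y):=\Psi^*(Ry+x_0)$ is a locally minimizing solution of the $\eps$-version of \eqref{eqns} on $\ddd$ with $\eps=1/R$, has $|\hat\Psi_R(0)|=0$, and boundary values tending as $R\to\infty$ to the symmetric data $\rstwo(e^{i\theta+i\phi_+},e^{i\theta+i\phi_-})$. By Theorem~\ref{thm1}(ii), for small $\eps$ the minimizer $\Phi_\eps$ of $E_\eps$ with symmetric data satisfies $E_\eps(\Phi_\eps)\leq E_\eps(U_\eps)-\delta$ for a fixed $\delta>0$ (a uniform gap produced by the proof of Theorem~\ref{thm1}(ii)), whereas $E_\eps(\hat\Psi_R)=E_\eps(U_\eps)+o(1)$ since $\hat\Psi_R$ also carries a compound core; patching $\Phi_\eps$ in a thin collar to match $\hat\Psi_R$'s boundary values at $o(1)$ cost contradicts local minimality. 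For the bound \eqref{S2}, the Pohozaev identity obtained by multiplying each equation in \eqref{eqns} by $x\cdot\overline{\nabla\psi_\pm^*}$, taking real parts, summing, and integrating over $B_R$ yields, on letting $R\to\infty$,
\[
\int_{\RR^2}(1-|\Psi^*|^2)^2+\beta\int_{\RR^2}(|\psi_+^*|^2-|\psi_-^*|^2)^2=2\pi,
\]
the $2\pi$ coming from the tangential asymptotic $|\nabla_\tau\Psi^*|^2\sim 1/r^2$ carried by degrees $[1,1]$. Thus \eqref{S2} is equivalent to $\int_{\RR^2}(1-|\Psi^*|^2)^2\leq\pi(2-\beta)$.

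This last upper bound is the principal obstacle. I would attempt to derive it by exploiting the no-core property just established: heuristically a coreless $[1,1]$ minimizer resembles two well-separated simple vortices, each admitting a scalar virial contribution $\int(1-|v_\pm^*|^2)^2\sim 2\pi$, so that convexity bounds $\int(1-|\Psi^*|^2)^2$ by $\pi(2-\beta)$. Making this rigorous for an arbitrary locally minimizing $\Psi^*$, rather than for a specific separated ansatz, appears to require either a refined competitor construction producing a strict energy decrease whenever the target inequality fails, or a more subtle comparison/monotonicity argument that would form the technical core of (ii).
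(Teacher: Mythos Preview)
Your treatment of part (i) is essentially the paper's: both use the splitting identity, a lower bound coming from $G$ applied to each rescaled component (after patching to symmetric boundary data), and the symmetric competitor $\tilde U$ as upper bound, to force $|\psi_+^*|\equiv|\psi_-^*|$; the GL classification then finishes. One caveat: the results you cite classify \emph{locally minimizing} degree-one GL solutions, and it is not automatic that each $v_\pm^*$ is itself a local minimizer for $G$. The cleaner route (used in the paper) is to observe that $|\psi_+^*|=|\psi_-^*|$ forces the zeros to coincide at some $x_0$, and then to apply Mironescu's quotient/Pohozaev argument directly to conclude that any degree-one solution of \eqref{eqns} with $\psi_+(x_0)=\psi_-(x_0)=0$ is of the form \eqref{GLsol}.

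Part (ii) has two genuine gaps. First, in your non-vanishing argument the assertion $E_\eps(\hat\Psi_R)=E_\eps(U_\eps)+o(1)$ ``since $\hat\Psi_R$ also carries a compound core'' is not justified: knowing only that $\hat\Psi_R(0)=0$ gives no \emph{lower} bound of the form $E_\eps(\hat\Psi_R)\ge E_\eps(U_\eps)-o(1)$. What is actually needed here is a rigidity statement: any entire solution of \eqref{eqns} with both components vanishing at a common point must already be the equivariant GL-like solution. The paper proves exactly this (Proposition~\ref{unique}) by the Mironescu quotient-Pohozaev trick, writing $w_\pm=\psi_\pm/\rho$ and deriving $|w_\pm|\equiv 1$, $\partial_r w_\pm\equiv 0$. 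With that in hand, a common zero forces $\Psi^*$ to be GL-like, hence $S\equiv 0$, contradicting \eqref{S2}; so the non-vanishing is a \emph{corollary} of the integral bound, not the other way around.

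Second, your Pohozaev identity $\int(1-|\Psi^*|^2)^2+\beta\int(|\psi_+^*|^2-|\psi_-^*|^2)^2=2\pi$ is correct but, as you acknowledge, insufficient: it is one relation between two unknowns and cannot by itself yield $B\ge\pi$. The paper does not use Pohozaev here at all. Instead it builds a competitor on ${\mathbb D}_R$ consisting of two \emph{separated} simple vortices of types $[1,0]$ and $[0,1]$ (via the renormalized-energy construction of \cite{BBH2,AB1}), whose energy is at most $\pi\ln R+2\Qlo+\eta$; the core-energy inequality $2\Qlo\le Q_{GL}-(1-\beta)\pi/4$ of Lemma~\ref{lem3} then gives the upper bound $E(\Psi^*;{\mathbb D}_R)\le\pi\ln R+Q_{GL}-(1-\beta)\pi/4+\eta+o(1)$. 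Combining with the lower bound you already have, $E(\Psi^*;{\mathbb D}_R)\ge\pi\ln R+Q_{GL}-(1-\beta)\sigma+o(1)$ where $\sigma=\tfrac14\int(|\psi_+^*|^2-|\psi_-^*|^2)^2$, yields $\sigma\ge\pi/4$, i.e.\ \eqref{S2}. The missing idea is precisely this two-vortex competitor and the quantitative gap $2\Qlo\le Q_{GL}-(1-\beta)\pi/4$.
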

In particular, for $0<\beta<1$, the Ginzburg--Landau solution is {\em not} locally minimizing for \eqref{eqns}.  This implies (see Proposition~\ref{unique}) that for $0<\beta<1$, local minimizers with degree pair $[n_+,n_-]=[1,1]$ must have distinct zeros in each component.  In this way, a locally minimizing solution for $\beta\in (0,1)$ should resemble a gluing together of two simple  vortex solutions (of degrees $[1,0]$ and $[0,1]$), studied in \cite{ABM}.  This has an important implication for the Dirichlet problem:  let $\Omega\subset\RR^2$ be a bounded, smooth domain, and $g:\partial\Omega \to \Sigma$ a given smooth boundary condition.  We then conclude:
\begin{thm}\label{thm4}
Let $0<\beta<1$, and suppose $\Psi_\eps$ minimizes $E_\eps(\Psi;\Omega)$ with Dirichlet boundary condition 
$\Psi_\eps|_{\partial\Omega}=g.$  Then there is some $c>0$ such that, for all $\eps>0$ sufficiently small, $|\Psi_\eps|\ge c$ in $\Omega$.
\end{thm}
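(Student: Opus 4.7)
The plan is a proof by contradiction using a blow-up at scale $\eps$, reducing to the rigidity of locally minimizing entire solutions already established in Theorem \ref{thm2}(ii) and \cite{ABM}.

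Suppose the conclusion fails: there exist $\eps_n\downarrow 0$ and $x_n\in\Omega$ with $|\Psi_{\eps_n}(x_n)|\to 0$. From the $\eps\to 0$ analysis of \cite{AB2}, the zero set of $\Psi_{\eps_n}$ is confined to small neighborhoods of a finite family of limit points $a_1,\ldots,a_N\in\Omega$, each carrying a degree pair $[n^j_+,n^j_-]$ from the simple list $\{\pm[1,0],\pm[0,1],\pm[1,1]\}$. Outside a fixed-radius neighborhood of $\bigcup_j\{a_j\}$, and in a collar of $\partial\Omega$ (using $|g|\equiv 1$ and elliptic regularity), one has $|\Psi_{\eps_n}|\ge c_0>0$. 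After extracting a subsequence, $x_n\to a_j$ for some $j$; write $[d_+,d_-]:=[n^j_+,n^j_-]$.

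I then rescale: $\tilde\Psi_n(y):=\Psi_{\eps_n}(x_n+\eps_n y)$ on the dilated domains $\Omega_n:=\eps_n^{-1}(\Omega-x_n)\uparrow\RR^2$. These maps solve the unscaled system (\ref{eqns}), obey $|\tilde\Psi_n|\le 1$, and by standard elliptic theory are uniformly bounded in $C^{1,\alpha}_{\rm loc}$. Along a diagonal subsequence, $\tilde\Psi_n\to\Psi^*$ in $C^1_{\rm loc}(\RR^2)$, with $\Psi^*$ a solution of (\ref{eqns}) in $\RR^2$ and $|\Psi^*(0)|=0$ by continuity. The logarithmic upper bound on $E_{\eps_n}(\Psi_{\eps_n};B_\rho(a_j))$ from \cite{AB2}, combined with the matching lower bound, gives uniform control of the potential term after subtracting the single-vortex $\pi(|d_+|+|d_-|)\log(1/\eps_n)$ contribution, which translates after rescaling into the integrability (\ref{BMRcond}) for $\Psi^*$. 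To see that $\Psi^*$ is locally minimizing in the sense of Definition \ref{deGiorgi}, fix a bounded regular $U\subset\RR^2$ and a competitor $\Phi\in H^1(U;\CC^2)$ with $\Phi|_{\partial U}=\Psi^*|_{\partial U}$; a standard collar interpolation exploiting the $C^1$ convergence of $\tilde\Psi_n$ near $\partial U$ produces $\Phi_n\to\Phi$ in $H^1(U)$ with $\Phi_n|_{\partial U}=\tilde\Psi_n|_{\partial U}$, and the rescaled $\Phi_n$ can be spliced into $\Psi_{\eps_n}$ on $x_n+\eps_n U$; minimality of $\Psi_{\eps_n}$ and passage to the limit yield $E(\Psi^*;U)\le E(\Phi;U)$. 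The degrees of $\tilde\Psi_n$ on large circles equal $[d_+,d_-]$ for $n$ large and pass to the $C^0$ limit, so $\deg(\psi^*_\pm,\infty)=d_\pm$.

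Thus $\Psi^*$ is a locally minimizing solution of (\ref{eqns}) satisfying (\ref{BMRcond}) with $|\Psi^*(0)|=0$. If $[d_+,d_-]=\pm[1,0]$ or $\pm[0,1]$, the ``coreless'' property of such ground-state local minimizers from \cite{ABM} forces $\inf_{\RR^2}|\Psi^*|>0$, contradicting $|\Psi^*(0)|=0$. If $[d_+,d_-]=\pm[1,1]$, then since $0<\beta<1$, Theorem \ref{thm2}(ii) (after a conjugation symmetry to reduce the $-1$ case) again gives $\inf_{\RR^2}|\Psi^*|>0$, the same contradiction. The main obstacle I anticipate is the De Giorgi-style transfer of minimality from $\Psi_{\eps_n}$ to the blow-up limit $\Psi^*$, which hinges on constructing $H^1$ competitors whose boundary traces interpolate between $\tilde\Psi_n$ and $\Psi^*$ with energy cost vanishing as $n\to\infty$; once this is established, the rest is an immediate appeal to the entire-solution classification.
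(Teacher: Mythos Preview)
Your proposal is correct and follows essentially the same blow-up and classification strategy as the paper.  The one noteworthy difference is in how the degree pair of the entire limit $\Psi^*$ is handled: you track the degree through the $\eps\to 0$ vortex analysis of \cite{AB2}, identifying $\deg(\psi^*_\pm,\infty)$ with the local degree $[d_+,d_-]$ of the limiting vortex $a_j$, whereas the paper bypasses this entirely by invoking Proposition~\ref{n01} (proved earlier in Section~\ref{sec3}), which shows \emph{a priori} that any locally minimizing entire solution has degrees $n_\pm\in\{0,\pm 1\}$.  The paper then simply runs through the four cases $[0,0]$, $[\pm1,0]$, $[0,\pm1]$, $[\pm1,\pm1]$, ruling each out (the $[0,0]$ case giving a constant of modulus $1/\sqrt{2}$ in each component, which you omit).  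This is slightly cleaner, since it avoids the need to justify that the degree seen on circles of radius $R\eps_n$ around $x_n$ really equals the degree assigned to $a_j$ by the renormalized-energy analysis --- a step which, while plausible, would require care if several simple vortices coalesce at $a_j$ at an intermediate scale.  Your identification of the transfer of local minimality to the blow-up limit as the main technical point is apt; the paper treats this as ``standard'' and does not spell it out.
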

In particular, the minimizer in the disk $\Omega=\ddd$ with symmetric boundary condition \eqref{sym} has a single Ginzburg--Landau type vortex (with both components vanishing at the origin) for $\beta\ge 1$, but for $0<\beta<1$ each component vanishes separately, and $|\Psi_\eps|$ is bounded away from zero.  From the analysis of the renormalized energy done in \cite{AB1,AB2}, as $\eps\to 0$ the zeros of $\psi_+$ and $\psi_-$ must tend to the origin.  It is an interesting open question to determine the rate at which they coalesce as $\eps\to 0$.  
If the mutual distance between the zeros in each component is of the order of $\eps$, then blowing up at scale $\eps$ produces a locally minimizing solution to \eqref{eqns} with degree pair $[1,1]$ at infinity.  Necessarily, this local minimizer is non-equivariant, with separated zeros in each component.  On the other hand, if no locally minimizing solution exists with degree pair $[n_+,n_-]=[1,1]$, then  the distance between the two vortices in the boundary-value problem must necessarily be much larger than $\eps$, and the compound vortex breaks down into a distinct pair of $[n_+,n_-]=[1,0]$ and $[n_+,n_-]=[0,1]$ vortices for $\eps>0$. 

The analysis of locally minimizing solutions to \eqref{eqns} is done in sections~\ref{sec3} and \ref{sec4}.  It relies on {\it a priori} estimates of solutions in the spirit of Brezis, Merle, \& Rivi\`ere \cite{BMR} and Shafrir \cite{Sh}.

\medskip

Finally, we return to the symmetric boundary value problem \eqref{sym}.  Recall that Theorem~\ref{thm1} states that for $0<\beta<1$ the symmetric solutions $U_\eps={1\over\sqrt{2}}(u_\eps,u_\eps)$ cannot be local minimizers for small $\eps$.  Using a bifurcation analysis, we provide a more detailed description of how the symmetric solutions lose stability, and the structure of the solutions near the critical value:
\begin{thm}\label{thm3}  Let $0<\beta<1$.  
\begin{enumerate}
\item[(i)]  There exists $\eps_\beta>0$ for which $U_\eps$ is a strict local minimizer of $E_\eps$ for $\eps>\eps_\beta$, and $U_\eps$ is unstable if $\eps<\eps_\beta$.
\item[(ii)] $\eps=\eps_\beta$ is a point of bifurcation for critical points of $E_\eps$ with Dirichlet condition \eqref{sym}.  More precisely, there exists $\delta>0$ and a real-analytic family $\{(\Psi^{t,\xi},\eps(t)\}_{|t|<\delta,\xi\in\so}$ of non-equivariant solutions bifurcating from $U_{\eps_\beta}$ at $t=0$, and these are the only non-equivariant solutions in a $\delta$-neighborhood of $(U_{\eps_\beta}, \eps_\beta)$ in $\HH\times (0,\infty)$.
  Each component of the non-equivariant solutions  $\Psi = (\psi_{+},\psi_{-})$ has exactly one zero $\psi_\pm(z_{\pm})=0,$  and their zeros are antipodal $z_{-}=-z_{+}\neq 0$. 
\end{enumerate}
\end{thm}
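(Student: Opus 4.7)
The plan is to study the Hessian $E_\eps''(U_\eps)$, locate the first eigenvalue crossing as $\eps$ decreases, and then carry out an analytic Crandall--Rabinowitz bifurcation that respects the rotation--gauge symmetry.

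First I would compute the second variation. Writing a perturbation $\Phi=(\phi_+,\phi_-)\in H^1_0(\ddd;\CC^2)$ in the symmetric/antisymmetric coordinates $\phi_s=(\phi_++\phi_-)/\stwo$, $\phi_a=(\phi_+-\phi_-)/\stwo$, a direct computation (using $\bar U_\eps\cdot\Phi=\bar u_\eps\phi_s$ and $|\psi_+|^2-|\psi_-|^2=2t\Re(\bar u_\eps\phi_a)+O(t^2)$) gives the orthogonal splitting
\be
    \tfrac12 E_\eps''(U_\eps)[\Phi,\Phi]=Q_\eps^s[\phi_s]+Q_\eps^a[\phi_a],
\ee
where $Q_\eps^s[\phi_s]=\tfrac12 G_\eps''(u_\eps)[\phi_s,\phi_s]$ is positive definite by uniqueness of the GL minimizer $u_\eps$, and
\beann
    Q_\eps^a[\phi_a]=\into\left\{\tfrac12|\grad\phi_a|^2+\tfrac{1}{2\eps^2}(|u_\eps|^2-1)|\phi_a|^2+\tfrac{\beta}{\eps^2}(\Re(\bar u_\eps\phi_a))^2\right\}dx.
\eeann
Thus stability of $U_\eps$ reduces to positivity of $Q_\eps^a$ on $H^1_0(\ddd;\CC)$.

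For (i), I decompose $Q_\eps^a$ in angular Fourier modes $\phi_a(r,\theta)=\sum_k e^{ik\theta}h_k(r)$, using $u_\eps=f_\eps(r)e^{i\theta}$; the modes decouple into Sturm--Liouville problems on $(0,1)$ with $h_k(1)=0$. A mode-by-mode comparison (the centrifugal barrier $k^2/r^2$ for $k\ne 0,1$, and the extra positive coupling $2\beta f_\eps^2/\eps^2$ acting on $\Re h_1$ in mode $k=1$) singles out the radial mode $k=0$ as critical, governed by $\LLL_\eps^0 h=-\tfrac{1}{r}(rh')'+\tfrac{1}{\eps^2}((1+\beta)f_\eps^2-1)h$ with $h(1)=0$. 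For $\eps$ large the Poincar\'e inequality on $\ddd$ gives $Q_\eps^a>0$ immediately. For $\eps$ small I rescale at the vortex: the blow--up $\Psi_\eps(\eps\,\cdot)$ converges on compact sets to the entire symmetric solution $U_\infty=\rstwo(u_\infty,u_\infty)$ of \eqref{eqns}; local energy minimality of $U_\eps$ along any sequence $\eps\to 0$ would pass to $U_\infty$, contradicting Theorem~\ref{thm2}(ii) since $|\psi_+|^2-|\psi_-|^2\equiv 0$ for $U_\infty$ forces a violation of \eqref{S2}. Truncating the destabilizing direction of $U_\infty$ then yields $\phi_a\in H^1_0(\ddd)$ with $Q_\eps^a[\phi_a]<0$ for small $\eps$. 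Writing $\mu_1(\eps)$ for the principal eigenvalue of $\LLL_\eps^0$, continuity furnishes $\eps_\beta:=\sup\{\eps>0:\mu_1(\eps)\le 0\}\in(0,\infty)$ with $\mu_1(\eps_\beta)=0$; combined with $Q_\eps^s>0$, this proves (i).

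For (ii) I apply the analytic Crandall--Rabinowitz theorem to the real-analytic gradient map $\mathcal{F}(\Psi,\eps)=\grad E_\eps(\Psi)$ from $\HH\times(0,\infty)$ into its dual. At $(U_{\eps_\beta},\eps_\beta)$ the self-adjoint Fredholm linearization has kernel equal to the mode-zero eigenspace $\{c\,h_0(r):c\in\CC\}$, where $h_0$ is the (real) principal eigenfunction of $\LLL_{\eps_\beta}^0$. The combined rotation--gauge action $(\psi_+,\psi_-)(x)\mapsto e^{-i\gamma}(\psi_+,\psi_-)(R_\gamma x)$ fixes $U_\eps$ and the boundary condition \eqref{sym}, and acts freely on the kernel by $h_0\mapsto e^{-i\gamma}h_0$. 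Fixing a slice transverse to this $\so$-action (e.g.\ $c\in\RR$) reduces the kernel to a simple eigenvalue; the transversality $\partial_\eps\mathcal{F}(U_{\eps_\beta},\eps_\beta)\cdot h_0\notin\mathrm{Range}\,\LLL_{\eps_\beta}$ amounts to $\mu_1'(\eps_\beta)\ne 0$, which I would verify by differentiating the Rayleigh quotient and invoking the real-analytic dependence of $f_\eps$ on $\eps$ together with the strict sign change of $\mu_1$ at $\eps_\beta$. The analytic Crandall--Rabinowitz theorem then produces a real-analytic curve $(\Psi^{t,1},\eps(t))$ of non-equivariant solutions in the slice, unique there; the $\so$-orbit yields the full family $(\Psi^{t,\xi},\eps(t))$ and its local uniqueness in a $\delta$-neighborhood of $(U_{\eps_\beta},\eps_\beta)$ in $\HH\times(0,\infty)$. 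The antipodal structure of the zeros follows from the leading-order expansion $\psi_\pm(x)=\rstwo(u_{\eps(t)}(x)\pm t\xi\,h_0(|x|))+O(t^2)$: since $h_0(0)\ne 0$ (principal eigenfunction of a regular radial 2D Schr\"odinger operator) and $u_{\eps(t)}$ vanishes transversally at the origin with $f_{\eps_\beta}'(0)>0$, the implicit function theorem gives exactly one zero $z_\pm\in\ddd$ of $\psi_\pm$ for $|t|$ small, determined by $u_{\eps(t)}(z_\pm)\pm t\xi h_0(0)=0$; hence $z_\pm=\mp t\xi\,h_0(0)/f_{\eps_\beta}'(0)+O(t^2)$ and $z_-=-z_+\ne 0$. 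The main obstacle will be the transversality verification $\mu_1'(\eps_\beta)\ne 0$ together with the careful bookkeeping of the $\so$-symmetry in Crandall--Rabinowitz, since the full kernel is two-dimensional over $\RR$ and only becomes simple after quotienting by the symmetry.
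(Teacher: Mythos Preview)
Your overall architecture --- split into symmetric/antisymmetric parts, locate the first eigenvalue crossing of the antisymmetric block $Q_\eps^a$, then run Crandall--Rabinowitz modulo the rotation--gauge $\so$-action --- matches the paper's. But your Fourier analysis of $Q_\eps^a$ contains a genuine error that propagates through the rest of the argument.

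The term $(\Re(\bar u_\eps\phi_a))^2$ does \emph{not} diagonalise in single Fourier modes. Writing $\phi_a=\sum_k h_k(r)e^{ik\theta}$ and $\bar u_\eps\phi_a=f_\eps\sum_k h_k e^{i(k-1)\theta}$, the square of the real part couples $h_k$ with $\overline{h_{2-k}}$. Thus the quadratic form decomposes into blocks indexed by the pairs $(k,2-k)$, and in particular the $k=0$ and $k=2$ modes are tied together in a $2\times 2$ Sturm--Liouville system (the paper's operator $\mathcal M_\la$ acting on $(a_0,a_2)$). Your scalar operator $\LLL_\eps^0$ is only the restriction of this block to the slice $a_2\equiv 0$; but the paper shows (by a rearrangement argument on the Rayleigh quotient) that the ground state has $a_2(r)>0$ in $(0,1)$. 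Consequently the true first crossing occurs at a larger $\eps_\beta$ than the one your radial problem would detect (if it crosses at all --- your blow-up argument produces a destabilising $\phi_a$, not a destabilising \emph{radial} $\phi_a$). More importantly, the kernel at the bifurcation point is spanned by $w_\xi=\bar\xi\,a_0(r)-\xi\,a_2(r)e^{2i\theta}$, not by $\{c\,h_0(r):c\in\CC\}$ for any radial $h_0$. Your description of the kernel, the slicing, and the leading-order expansion of $\psi_\pm$ all rest on this misidentification.

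Three further points. First, you define $\eps_\beta=\sup\{\eps:\mu_1(\eps)\le 0\}$ and deduce $\mu_1>0$ for $\eps>\eps_\beta$, but part~(i) also requires $\mu_1(\eps)<0$ for \emph{all} $\eps<\eps_\beta$; the paper obtains this from a strict monotonicity argument (rescale to a fixed equation on a variable disk and use pointwise monotonicity of the radial profile in the domain size). Second, the transversality condition $\mu_1'(\eps_\beta)\neq 0$ does not follow from a ``strict sign change'' alone; the paper establishes it via the same monotonicity, showing the rescaled eigenvalue has strictly negative derivative at every $R$. Third, your expansion only gives $z_++z_-=O(t^2)$, i.e.\ antipodality to leading order; the paper gets \emph{exact} antipodality for all small $t$ from the discrete symmetry $T(\psi_+,\psi_-)(x)=(-\psi_-(-x),-\psi_+(-x))$, which fixes $U_\eps$ and the boundary data and hence, by uniqueness in the Crandall--Rabinowitz branch, fixes the bifurcating solutions as well.
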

 
A more detailed description of the bifurcation from the equivariant solutions $U_\eps$ is given in Theorem~\ref{bifthm}.  Indeed, the analysis of the linearization around $U_\eps$ follows the same steps as for the degree $d\ge 2$ case for the Ginzburg--Landau functional (see Mironescu \cite{M1}), and we show that, apart from the $\mathbb{S}^1$-symmetry of the problem, the ground state eigenspace is simple.  

\medskip

Finally, we briefly discuss two physical contexts for our results on compound vortices.

\subsection*{Fractional degree vortices}

Our original motivation for studying the functional \eqref{EOmega} comes from Spinor Ginzburg--Landau functionals introduced in models of ferromagnetic and antiferromagnetic superconductors \cite{KR} or Bose--Einstein Condensates (BEC) \cite{BEC}.

 Let $\Omega\subset\RR^2$ be a smooth, bounded domain, and
$\Psi\in H^1(\Omega;\CC^2)$.  We define an energy functional,
$$   \mathcal{E}_\eps (\Psi) = \frac12\into \left\{ |\nabla\Psi|^2 
         + {1\over 2\eps^2} \left(|\Psi|^2-1\right)^2
             + {2\beta\over \eps^2} \left( \psi_1\times\psi_2\right)^2 \right\}\, dx,
$$
where $\Psi=(\psi_1,\psi_2)$, $\psi_1\times\psi_2 = \Im(\overline{\psi_1}\psi_2)$,
$\beta>0$ 
and $\eps>0$ are parameters.  The quantity 
$$   S=\psi_1\times\psi_2 = \Im \{ \overline\psi_1 \, \psi_2\}$$ 
is  interpreted as the $z$-component of a spin vector, which in this 
two-dimensional model is assumed to be orthogonal to the plane of $\Omega$.  

As $\eps\to 0$, energy minimizers should converge pointwise to the manifold  on which the 
potential term 
$F(\Psi)= \left(|\Psi|^2-1\right)^2
             + {\beta\over 2} \left( \psi_1\times\psi_2\right)^2$
vanishes.  Since
$\beta>0$, we obtain a two-dimensional surface (a 2-torus)
$\Sigma\subset {\mathbb S}^3\subset \CC^2$ parametrized by two real phases, $\phi,\om$:
$$  \Sigma:\quad \Psi = G(\phi,\om):= (e^{i\phi}\cos\om \, , \, e^{i\phi}\sin\om ). $$  
Notice that $G$ is doubly-periodic with minimal
period $G(\phi+\pi,\om\pm\pi)= G(\phi,\om)$, with each phase 
executing a {\it half} cycle.  For a smooth function $\Psi(x)$ taking 
values in $\Sigma$ and a simple closed curve $C$ contained in the domain of $\Psi$ we may
therefore define a pair of {\em half-integer valued} degrees 
$(d_\phi, d_\om)$ corresponding to the winding numbers of the two phases around
$\Sigma$.  From the above observation, these degrees satisfy
$d_\phi , d_\om \in \frac12 \ZZ,$ and $d_\phi + d_\om \in \ZZ.$  The singularities which appear in energy minimizers as $\eps\to 0$ will thus be half-integer quantized, giving rise to {\em fractional degree vortices.}

The connection between the energies $E_\eps$ and $\mathcal{E}_\eps$ is direct:  by a unitary transformation in the range,
$$  \psi_\pm := {1\over\sqrt{2}}(\psi_1 \pm i\psi_2),  $$
the two are seen to be equal, $\mathcal{E}_\eps(\psi_1,\psi_2)=E_\eps(\psi_+,\psi_-)$.  In these new coordinates, the fractional degree vortex for $(\psi_1,\psi_2)$ with degree pair $(d_\phi,d_\om)$ becomes an integer quantized vortex for $(\psi_+,\psi_-)$, with degree pair 
$[n_+, n_-]=\left[ d_\phi+ d_\omega, d_\phi - d_\omega\right]$.  In particular, the minimal energy fractional degree vortices with $(d_\phi,d_\omega)=(\frac12, \pm\frac12)$ are associated to the integer degrees $[n_+,n_-]=[1,0]$ and $[0,1]$, and the Ginzburg--Landau-like vortex of degree pair $(d_\phi,d_\omega)=(1,0)$ becomes the compound vortex $[n_+,n_-]=[1,1]$ in the new coordinates for $\Psi$.  We note that in the new coordinates, the spin 
$$  S = \psi_1\times\psi_2 = \frac12 (|\psi_-|^2 -|\psi_+|^2)  $$
remains an important quantity.

\subsection*{Two-component BEC}

The functional \eqref{EOmega} may also be derived from the Gross-Pitaevsky energy for a rotating mixture of two BEC, introduced in \cite{Sch}.  In this model, we consider the pair $\Psi=[\psi_+,\psi_-]: \ \RR^2\to \CC^2$ which minimizes
\begin{equation}\label{BEC}
\mathcal{G}_\eps(\Psi) = \int_{\RR^2} \left\{
    \frac12 |\nabla \Psi|^2 - \omega \langle \Psi, i\partial_\theta\Psi\rangle
      + \frac12 V(x)|\Psi|^2
      + {1\over \eps^2}\left[ a_+|\psi_+|^4 + a_- |\psi_-|^4
       + 2b |\psi_+|^2 |\psi_-|^2\right] \right\}, 
\end{equation}
where the constant $\omega$ represents the angular speed of rotation, $V(x)$ the trapping potential, and $a_+,a_->0$ and $b\in\RR$ material constants.  Here, and throughout the paper, we use angle brackets to denote the real scalar product on $\CC$ or $\CC^2$,
$$   \llan  \Phi, \Psi\rran = \Re \left\{ \overline{\Phi}\cdot\Psi\right\}, \quad \Phi,\Psi\in \CC^2;
 \qquad \llan \varphi,\psi\rran = 
 \Re \left\{ \overline{\varphi}\,\psi\right\}, \quad \varphi,\psi\in \CC.  $$
 The energy is to be minimized over the constraints,
$$   \int_{\RR^2} |\psi_+|^2 = m_+, 
\qquad  \int_{\RR^2} |\psi_-|^2 = m_-.
$$
For simplicity we replace the trapping potential $V$ by a ``flat trap'' in the domain $\Omega\Subset\RR^2$ (see \cite{CY} for example), in other words we set $V(x)\equiv 0$ but impose a (Neumann) boundary condition via $\Psi\in H^1(\Omega;\CC^2)$.  We also assume that $a_+=a_-=:a>0$ and assume the two species are balanced,
\begin{equation}\label{L2}
  \dashint_\Omega |\psi_+|^2 = \frac12 = \dashint_\Omega |\psi_-|^2.
\end{equation}
Given the constraints on the $L^2$-norms, we may then complete the square in the quartic terms of the potential by adding in constant multiples of $|\psi_\pm|^2$ without changing the minimizers, to arrive at an energy which more closely resembles \eqref{EOmega},
\begin{equation*}
\tilde{\mathcal{G}}_{\tilde\eps}(\Psi) = \int_{\Omega} \left\{
    \frac12 |\nabla \Psi|^2 - \omega \langle \Psi, i\partial_\theta\Psi\rangle
      + {1\over\tilde\eps^2} (|\Psi|^2 - 1)^2 
       + {\beta\over \tilde\eps^2}\left[ |\psi_-|^2 -|\psi_+|^2\right]^2 \right\}, 
\end{equation*}  
with $\tilde\eps^2={\eps^2\over 2(b+a)}$ and $\beta=-2{(b-a)\over (b+a)}$.  Thus we recover the form of \eqref{EOmega} with $\beta>0$ provided that $-a<b<a$.  As for the single-component BEC energy (see Ignat \& Millot \cite{IM1},) we expect that for $\omega= O(|\ln\tilde\eps|)$, minimizers will have vortices in $\Omega$, and blow-up around vortex centers will result in locally minimizing entire solutions of our system \eqref{eqns}.
In particular, the small-scale structure of vortices in the two-component BEC for $-a<b<a$ will be determined by our analysis of the blow-up problem \eqref{eqns}.  Since the parameter regime $-a<b<0$ corresponds to $\beta>1$, the degree $[n_+,n_-]=[1,1]$ vortices will be radially symmetric, and well-described by the classical Ginzburg--Landau model.  However, the range $0<b<a$ corresponds to $\beta\in (0,1)$, and in this case the $[n_+,n_-]=[1,1]$ vortices will decompose into two separate vortex cores, with $|\Psi|$ bounded away from zero in the core, the ``coreless'' vortices.

\section{The Symmetric Dirichlet Problems}\label{bvpsec}

\setcounter{thm}{0}

We begin with the fundamental boundary value problems, with symmetric data given on the boundary of the unit disk $\Omega=\ddd$:  for $n_\pm\in \{0,1\}$, define

\be\label{Idef}
I_{[n_+,n_-]} (\eps;\beta) =  \min\left\{  E_\eps(\Psi): \ 
     \Psi\in H^1(\ddd;\CC^2), \ \Psi|_{\partial \ddd} 
              = {1\over\sqrt{2}}(e^{in_+\theta}, e^{in_-\theta}) \right\}, 
\ee
For future use, we also define analogous values for the disk ${\mathbb D}_\delta$ centered at the origin and of radius $\delta$:
\begin{align}\label{Jdef}
J_{[n_+,n_-]}(\eps,\delta;\beta)&:= 
 \min\left\{  E_\eps(\Psi): \ 
     \Psi\in H^1({\mathbb D}_\delta;\CC^2), \ \Psi|_{\partial {\mathbb D}_\delta} 
              = {1\over\sqrt{2}}(e^{in_+\theta}, e^{in_-\theta}) \right\} \\
              \nnn
    &= I_{[n_+,n_-]} \left({\eps\over\delta};\beta\right),
\end{align}
by scaling.
For comparison purposes, we also define the analogous quantity for the Ginzburg--Landau functional, 
\begin{align}\label{IGL}
I_{GL}(\eps) & =  \min\left\{  G_\eps(u): \ 
     u\in H^1(\ddd ;\CC), \ u|_{\partial \ddd} 
              =e^{i\theta} \right\}, \\
              \nnn
G_\eps(u) &= \int_{\ddd} \left(  \frac12 |\nabla u|^2 + {1\over 4\eps^2}
     (|u|^2-1)^2\right) dx,    
\end{align}
and $J_{GL}(\eps,\delta)=I_{GL}(\eps/\delta)$ in analogy with (\ref{Jdef}).
Our analysis depends on comparisons between the following {\em vortex core energies,} (see \cite{AB2}, \cite{BBH2}),
\be\label{cores}
\left. \begin{aligned}
  \Qll &= \lim_{\eps\to 0} \left(  \Ill(\eps;\beta) - \pi |\ln\eps| \right),\\
 \Qlo = \Qol &= \lim_{\eps\to 0} \left(  \Ilo(\eps;\beta) - {\pi\over 2} |\ln\eps| \right),
 \\ 
 Q_{GL} =& \lim_{\eps\to 0} \left(  I_{GL}(\eps) - \pi |\ln\eps| \right)
 \end{aligned} \right\}.
\ee
From \cite{AB2} (using the renormalized energy for the problem $\Ill(\eps;\beta)$) we have
\be\label{Q1less}
  \Qll(\beta)\le 2\, \Qlo(\beta), \qquad\mbox{for all $\beta>0$.}
\ee

We will constantly use the following elementary but useful identity:
\bea\nnn
F(\Psi) &:= \left( |\Psi|^2 -1\right)^2 + \beta (|\psi_+|^2-|\psi_-|^2)^2  \\
\label{identity}
&=  2\left[ \left( |\psi_+|^2-\frac12\right)^2 + \left( |\psi_-|^2-\frac12\right)^2 \right]
              + 4(\beta-1)\, S^2,
\end{align}
and we recall that the Spin is given by
$$  S= \frac12 \left( |\psi_-|^2 - |\psi_+|^2\right).  $$

\begin{lem}\label{lem1}
For all $\beta>0$, $\Ill(\eps;\beta)\le I_{GL}(\eps)$.  For $\beta\ge 1$, 
$\Ill(\eps;\beta)= I_{GL}(\eps)$.\\ 
For $\beta>1$  and for any minimizer $\Psi_\eps$ of
$\Ill(\eps;\beta)$, we have
$$  \Psi_\eps(x) = {1\over\sqrt{2}} \left( u(x), u(x) \right),  $$
where  $u(x)$ is a minimizer for the problem $I_{GL}(\eps)$.\\
For $\beta=1$ and for any minimizer $\Psi_\eps$ of
$\Ill(\eps;1)$, we have
$$  \Psi_\eps(x) = {1\over\sqrt{2}} \left( u^1(x) , u^2(x) \right),  $$
where $u^1$, $u^2$ are minimizers for the problem $I_{GL}(\eps)$.
\end{lem}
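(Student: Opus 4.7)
The plan is threefold: first, establish the upper bound $\Ill(\eps;\beta)\le I_{GL}(\eps)$ by a natural diagonal test function; second, prove the matching lower bound when $\beta\ge 1$ using the algebraic identity \eqref{identity}; third, read off the rigidity statements by examining equality in the resulting chain of inequalities.

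For the upper bound, let $u$ minimize $G_\eps$ with boundary data $e^{i\theta}$, and take $U:=\frac{1}{\sqrt 2}(u,u)\in \HH$. Direct computation gives $|U|^2=|u|^2$, $|\nabla U|^2=|\nabla u|^2$, and $|\psi_+|^2-|\psi_-|^2\equiv 0$, so $E_\eps(U)=G_\eps(u)=I_{GL}(\eps)$, yielding the upper bound for every $\beta>0$.

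For the lower bound when $\beta\ge 1$, I would invoke \eqref{identity}: since $4(\beta-1)S^2\ge 0$,
\[ F(\Psi)\ge 2\left[\left(|\psi_+|^2-\tfrac12\right)^2+\left(|\psi_-|^2-\tfrac12\right)^2\right]. \]
The two components now decouple, so I rescale $v_\pm:=\sqrt{2}\,\psi_\pm$ (which satisfy $v_\pm|_{\partial\ddd}=e^{i\theta}$) and verify by substitution
\[ \int_{\ddd} \tfrac12|\nabla\psi_\pm|^2+\tfrac{1}{2\eps^2}\left(|\psi_\pm|^2-\tfrac12\right)^2 \, dx = \tfrac12\, G_\eps(v_\pm). \]
Summing over the two signs gives $E_\eps(\Psi)\ge \tfrac12\bigl[G_\eps(v_+)+G_\eps(v_-)\bigr]\ge I_{GL}(\eps)$, matching the upper bound.

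The rigidity statements follow by tracing equality cases. When $\beta=1$ the term $4(\beta-1)S^2$ vanishes identically, so equality only forces $G_\eps(v_\pm)=I_{GL}(\eps)$, meaning each $v_\pm=\sqrt{2}\,\psi_\pm=:u^{1,2}$ minimizes $G_\eps$ --- precisely the stated form. When $\beta>1$, equality additionally requires $S\equiv 0$, i.e., $|\psi_+|\equiv|\psi_-|$; combining this with the fact that both $v_\pm$ must be $G_\eps$-minimizers, the uniqueness of the degree-one Ginzburg--Landau minimizer with boundary data $e^{i\theta}$ (attributable to Mironescu) forces $v_+=v_-=u$, hence $\Psi_\eps=\frac{1}{\sqrt 2}(u,u)$. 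The only non-routine ingredient is invoking scalar Ginzburg--Landau uniqueness; everything else reduces to the identity \eqref{identity} and a factor-of-$\sqrt{2}$ rescaling.
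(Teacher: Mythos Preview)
Your proof is correct and follows essentially the same approach as the paper: the diagonal test function for the upper bound, the identity \eqref{identity} plus the $\sqrt{2}$-rescaling for the lower bound, and the equality-case analysis for rigidity. The only minor discrepancy is in the final step for $\beta>1$: the paper notes that $|v_+|=|v_-|$ forces the zeros of $v_\pm$ to coincide and then invokes Theorem~9.1 of Pacard--Rivi\`ere \cite{PR} (rather than Mironescu) to conclude $v_+=v_-$.
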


\begin{proof}
Let $u_\eps$ be the minimizer of $I_{GL}(\eps)$, and 
$U_\eps={1\over\sqrt{2}}(u_\eps,u_\eps)$.  Then, $U_\eps$ is admissible for
$\Ill(\eps;\beta)$ and has spin zero, and therefore
$$  \Ill(\eps;\beta) \le E_\eps(U_\eps) = G_\eps(u_\eps) = I_{GL}(\eps),  $$
for each $\eps>0$ and $\beta>0$.

If $\beta\ge 1$, let $\Psi^\eps=(\psi^\eps_+,\psi^\eps_-)$ minimize $\Ill(\eps;\beta)$, and set $v_{\pm}^\eps=\sqrt{2}\psi^\eps_\pm$.  Using (\ref{identity}), we have, with $S_\eps=\displaystyle\frac{1}{2} (|\psi^\eps_+|^2-|\psi^\eps_-|^2)$,
\begin{align*}
I_{GL}(\eps) &\le \min\{ G_\eps(v^\eps_+) ,G_\eps(v^\eps_-)\} \\
&\le  \frac12\left[  G_\eps(v^\eps_+) + G_\eps(v^\eps_-)\right]  \\
&=  \int_{\ddd} \left\{
     \frac12 |\nabla\Psi^\eps|^2 + {1\over 2\eps^2} 
           \left( \left(|\psi^\eps_+|^2-\frac12\right)^2
                + \left(|\psi^\eps_-|^2-\frac12\right)^2\right) \right\}  \\
  &=  E_\eps (\Psi^\eps) - {\beta-1\over\eps^2} \int_{\ddd} S_\eps^2  \\
  &=  \Ill(\eps;\beta)  - {\beta-1\over\eps^2} \int_{\ddd} S_\eps^2.
\end{align*}
Thus, $\Ill(\eps;\beta)=I_{GL}(\eps)$ for all $\eps>0$ and $\beta\ge 1$.
For $\beta>1$, $S_\eps(x)\equiv 0$, and  $E_\eps(\Psi^\eps)=\frac12 [G_\eps(v^\eps_+) + G_\eps(v^\eps_-)]$, and we conclude that each component must be a minimizer for $I_{GL}(\eps)$. Since 
$|v^\eps_+|=|v^\eps_-|$, the zeros of $v^\eps_\pm$ coincide. By Theorem ~9.1 of Pacard and Rivi\`ere \cite{PR},  $v^\eps_+=v^\eps_-$. 

For $\beta=1$, $E_\eps$ decomposes into a Ginzburg--Landau energy for each component $\psi_\pm$, and the conclusion is immediate.
\end{proof}

For $\beta>1$, there is a net energy saving in replacing two simple vortices, of degree pairs $[0,1]$ and $[1,0]$, by a single compound vortex at the same limiting location:

\begin{lem}\label{lem2}
For all $\beta>1$, $\Qll(\beta)=Q_{GL} < 2\, \Qlo(\beta)$.
\end{lem}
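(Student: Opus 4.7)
The equality $\Qll(\beta) = Q_{GL}$ follows immediately from Lemma~\ref{lem1}: since $\Ill(\eps;\beta) = I_{GL}(\eps)$ for all $\eps > 0$ and $\beta \geq 1$, subtracting $\pi |\ln\eps|$ and letting $\eps \to 0$ in \eqref{cores} yields the equality.

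For the strict inequality $Q_{GL} < 2\Qlo(\beta)$, my plan is to rearrange the energy of a minimizer using the identity \eqref{identity}. Let $\Psi^\eps$ minimize $\Ilo(\eps;\beta)$ and set $v^\eps_\pm := \sqrt{2}\,\psi^\eps_\pm$, so $v^\eps_+|_{\partial\ddd} = 1$ and $v^\eps_-|_{\partial\ddd} = e^{i\theta}$. Substituting \eqref{identity} into $E_\eps$ and rewriting the kinetic and potential terms in terms of $v^\eps_\pm$ gives
\begin{equation*}
2\,\Ilo(\eps;\beta) \;=\; G_\eps(v^\eps_+) + G_\eps(v^\eps_-) + \frac{2(\beta-1)}{\eps^2}\int_\ddd (S^\eps)^2\,dx.
\end{equation*}
Since $w \equiv 1$ is admissible on the degree-zero side, $G_\eps(v^\eps_+) \geq 0$, and by definition $G_\eps(v^\eps_-) \geq I_{GL}(\eps)$. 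Subtracting $\pi|\ln\eps|$ and letting $\eps \to 0$:
\begin{equation*}
2\Qlo(\beta) - Q_{GL} \;\geq\; 2(\beta-1)\,\liminf_{\eps\to 0}\frac{1}{\eps^2}\int_\ddd (S^\eps)^2\,dx,
\end{equation*}
so the problem reduces to showing that this liminf is strictly positive.

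To establish the positivity of the liminf, I would run a blow-up at a zero of $\psi^\eps_-$. Since $\deg(\psi^\eps_-;\partial\ddd) = 1$, there exists $z^\eps_- \in \ddd$ with $\psi^\eps_-(z^\eps_-) = 0$, and the renormalized energy analysis of \cite{AB2} forces $z^\eps_- \to 0$. Rescaling $\tilde\Psi^\eps(y) := \Psi^\eps(z^\eps_- + \eps y)$ on the expanding disk $\{|y| < (1-|z^\eps_-|)/\eps\}$, the uniform elliptic bounds arising from $|\Psi^\eps| \leq 1$ give $C^1_{\rm loc}$-subconvergence to an entire solution $\Psi^*$ of \eqref{eqns} with $\psi^*_-(0) = 0$. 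A De Giorgi-type comparison exploiting the minimality of $\Psi^\eps$ on $\ddd$ identifies $\Psi^*$ as a locally minimizing entire solution (in the sense of Definition~\ref{deGiorgi}) with degree pair $[0,1]$ at infinity. The corelessness property from \cite{ABM} then gives $|\Psi^*| \geq c_0 > 0$ on $\RR^2$, so $|\psi^*_+(0)|^2 = |\Psi^*(0)|^2 \geq c_0^2$, whence $(S^*(0))^2 = \tfrac14|\psi^*_+(0)|^4 \geq c_0^4/4$. Transferring back via $C^1$ convergence and the Lipschitz bound on $\Psi^\eps$ at scale $\eps$, I obtain $(S^\eps)^2 \geq c > 0$ on a disk of radius $c'\eps$ around $z^\eps_-$, and hence $\int_\ddd (S^\eps)^2\,dx \geq c''\eps^2$, which yields the strict inequality.

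The main obstacle is making the blow-up rigorous---in particular, showing uniform $\eps$-independent $C^1$-bounds on $\Psi^\eps$ at scale $\eps$ and verifying that the blown-up limit $\Psi^*$ inherits local minimality in the sense of Definition~\ref{deGiorgi}. These are standard De Giorgi-style variational arguments of the type already implemented in \cite{BMR, ABM} for the Ginzburg--Landau system and its two-component analogue, and the needed corelessness statement is precisely what is proved in \cite{ABM} for the ground-state degrees $[1,0]$ and $[0,1]$.
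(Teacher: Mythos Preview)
Your proposal is correct and follows essentially the same route as the paper. Both arguments use identity \eqref{identity} to split $2\Ilo(\eps;\beta)$ as a sum of two Ginzburg--Landau energies plus $\frac{2(\beta-1)}{\eps^2}\int S_\eps^2$, bound the Ginzburg--Landau pieces below by $I_{GL}(\eps)$, and then invoke blow-up together with the corelessness result of \cite{ABM} to rule out vanishing of the spin integral; the only cosmetic differences are that the paper phrases the last step as a contradiction (assuming $\frac{1}{\eps^2}\int S_\eps^2\to 0$ forces $S_*\equiv 0$ for the blow-up limit) rather than a direct lower bound, and that you have swapped the roles of the $\pm$ components relative to the paper's convention for $\Ilo$, which is immaterial.
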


\begin{proof}
The assertion $\Qll(\beta)=Q_{GL}$ for all $\beta\ge 1$ follows trivially from Lemma~\ref{lem1}, and so $\Qlo(\beta)\ge \frac12 Q_{GL}$ for $\beta\ge 1$
follows from  (\ref{Q1less}).
Note also that when $\beta=1$ by (\ref{identity}) the components decouple,
$$   E_\eps(\Psi)= \frac12 \left( G_\eps(\sqrt{2}\psi_+) + G_\eps(\sqrt{2}\psi_-)\right), 
$$
and hence $\Ilo(\eps;1)=\frac12 I_{GL}(\eps)$ for all $\eps>0$.

Suppose that $\Qlo(\beta)=\frac12 Q_{GL}$ for some $\beta> 1$, and let $\Psi^\eps$ attain the minimum $\Ilo(\eps;\beta)$ for that $\beta$.  Then,
\begin{align*}
\Ilo(\eps;\beta) & =  E_{\eps,\beta}(\Psi^\eps)
       = E_{\eps,1}(\Psi^\eps) + {\beta -1\over\eps^2}\int_{\ddd}  S_\eps^2 \\
    &\ge \Ilo(\eps,1) + {\beta -1\over\eps^2}\int_{\ddd}  S_\eps^2  \\
    &\ge \frac12 I_{GL}(\eps) + {\beta -1\over\eps^2}\int_{\ddd}  S_\eps^2.    
\end{align*}
In particular, we conclude that
$$   {1\over \eps^2} \int_{\ddd} S_\eps^2 \to 0.  $$

By the analysis of problem $\Ilo(\eps;\beta)$ in \cite{AB1}, minimizers $\Psi^\eps$ have a vortex ball ${\mathbb D}_\eps$ (of radius $O(\eps)$) with degree 
$\deg(\psi^\eps_+,\partial {\mathbb D}_\eps)=1$, $\deg(\psi^\eps_-,\partial {\mathbb D}_\eps)=0$.
Rescaling by $\eps$ and passing to the limit, these converge to a locally minimizing entire solution
$\Psi^*$ of (\ref{eqns}) in $\RR^2$, with degree $[n_+,n_-]=[1,0]$ at infinity.  The convergence being uniform on any compact set, the limit $\Psi^*$ has spin
$S_*=\frac12 (|\psi^*_-|^2 - |\psi^*_+|^2)\equiv 0$ in $\RR^2$.  This contradicts the main result of \cite{ABM}, where it is proven that $|\psi^*_-|$ is bounded away from zero for such solutions (and hence $S_*>0$ at the zero of $\psi^*_+$.)
\end{proof}

\medskip

We now turn to the case $0<\beta<1$, where the situation is very different.

\begin{lem}\label{lem3}
For all $\beta\in (0,1)$, 
$$   \Qll(\beta)\le 2 \Qlo(\beta)\le Q_{GL} - (1-\beta){\pi\over 4}.  $$
\end{lem}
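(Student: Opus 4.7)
The first inequality $\Qll(\beta)\le 2\Qlo(\beta)$ is already recorded as \eqref{Q1less}, so the real content is the second inequality $2\Qlo(\beta)\le Q_{GL}-(1-\beta)\pi/4$. My plan is to build an explicit comparison map for $\Ilo(\eps;\beta)$ out of the classical Ginzburg--Landau minimizer, and exploit the fact that introducing a nonzero spin lowers the potential once $\beta<1$, via the identity \eqref{identity}.

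Let $u_\eps$ minimize $I_{GL}(\eps)$ with $u_\eps|_{\partial\ddd}=e^{i\theta}$, and set
\begin{equation*}
\Psi^\eps:=\rstwo(u_\eps,\,1),
\end{equation*}
which is admissible for $\Ilo(\eps;\beta)$. Substituting directly, and using that $|\psi_+^\eps|^2-\tfrac12=(|u_\eps|^2-1)/2$, $|\psi_-^\eps|^2-\tfrac12=0$, and $S^\eps=(1-|u_\eps|^2)/4$, the identity \eqref{identity} yields
\begin{equation*}
E_\eps(\Psi^\eps)=\tfrac12\,G_\eps(u_\eps)-\tfrac{1-\beta}{4}\,B_\eps,\qquad B_\eps:=\frac{1}{4\eps^2}\int_{\ddd}(|u_\eps|^2-1)^2\,dx,
\end{equation*}
so the task reduces to finding the asymptotic value of $B_\eps$.

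For this I would use a Pohozaev identity: multiplying the Euler--Lagrange equation $-\Delta u_\eps=(1-|u_\eps|^2)u_\eps/\eps^2$ by the real inner product with $x\cdot\nabla u_\eps$ and integrating by parts on $\ddd$, together with the boundary relations $|u_\eps|=1$ and $|\partial_\tau u_\eps|^2=1$ on $\partial\ddd$, gives
\begin{equation*}
B_\eps=\frac{\pi}{2}-\frac{1}{4}\int_{\partial\ddd}|\partial_\nu u_\eps|^2\,dS.
\end{equation*}
By the classical theorem of Bethuel--Brezis--H\'elein \cite{BBH2}, $u_\eps\to e^{i\theta}$ in $C^k_{loc}(\overline\ddd\setminus\{0\})$; in particular $\partial_\nu u_\eps\to \partial_r e^{i\theta}|_{r=1}=0$ uniformly on $\partial\ddd$, giving $B_\eps=\pi/2+o(1)$.

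Combining these two computations,
\begin{equation*}
\Ilo(\eps;\beta)\le E_\eps(\Psi^\eps)=\tfrac12\,I_{GL}(\eps)-\tfrac{(1-\beta)\pi}{8}+o(1).
\end{equation*}
Doubling, subtracting $\pi|\ln\eps|$ and passing to the limit $\eps\to 0^+$ via the definitions \eqref{cores} yields $2\Qlo(\beta)\le Q_{GL}-(1-\beta)\pi/4$. The only delicate point is the decay $\partial_\nu u_\eps\to 0$ on $\partial\ddd$, but this is a routine consequence of the $C^k$ regularity of $u_\eps$ away from its unique vortex at the origin, so I do not expect any essential obstruction.
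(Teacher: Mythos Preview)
Your proof is correct and follows essentially the same route as the paper: the same test function $\Psi^\eps=\rstwo(u_\eps,1)$ and the same energy identity, reducing the estimate to the evaluation of $\displaystyle\frac{1}{\eps^2}\int_{\ddd}(1-|u_\eps|^2)^2$. The only difference is in this last step: the paper invokes blow-up and the Brezis--Merle--Rivi\`ere quantization result \cite{BMR} to get the limit $2\pi$, whereas you obtain it from the Pohozaev identity on $\ddd$ together with the boundary convergence $\partial_\nu u_\eps\to 0$ from \cite{BBH2}. Both arguments are standard; yours has the mild advantage of being self-contained on the bounded domain, while the paper's has the advantage of not needing the boundary-regularity statement.
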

In particular, for $\beta<1$ we may already conclude that the Ginzburg--Landau solution is {\it not} the minimizer for the problem $\Ill(\eps;\beta)$, and hence Theorem~\ref{thm1} follows from Lemmas~\ref{lem3} and \ref{lem1}.  It is an interesting open question to determine whether the strict inequality $\Qll(\beta)< 2 \Qlo(\beta)$ holds or not.  

\begin{proof}
Let $u_\eps$ minimize $I_{GL}(\eps)$, and set $\psi_+={1\over\sqrt{2}}u_\eps$,
$\psi_-={1\over\sqrt{2}}$.  Then $\Psi=(\psi_+,\psi_-)$ is admissible for
$\Ilo(\eps;\beta)$, and 
\begin{align*}
\Ilo(\eps;\beta) &\le
    E_\eps(\Psi) \\
    &=  \int_{\ddd} \left\{
        \frac14 |\nabla u_\eps|^2 + { (1+\beta)\over 16\eps^2} (1-|u_\eps|^2)^2\right\}\\
    &=  \frac12 G_\eps(u_\eps) - {(1-\beta)\over 16\eps^2} \int_{\ddd} (1-|u_\eps|^2)^2.
\end{align*}
By blow-up and the result of Brezis, Merle, and Rivi\`ere \cite{BMR} we have
$$   {1\over\eps^2}\int_{\ddd} (1-|u_\eps|^2)^2 \to 2\pi,  $$
and therefore we conclude that
$$  \Qlo(\beta) \le \frac12 Q_{GL} - (1-\beta){\pi\over 8},  $$
as claimed.
\end{proof}

\begin{lem}\label{lem4}
For all $\beta\in (0,1)$, 
$$  Q_{GL}\le \Qll(\beta)
      + (1-\beta) \liminf_{\eps\to 0} {1\over\eps^2}\int_{\ddd}  S_\eps^2\, dx,  $$
where $S_\eps$ is the spin associated to any minimizer $\Psi_\eps$ of
$\Ill(\eps;\beta)$.
\end{lem}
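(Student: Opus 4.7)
The plan is to exploit the algebraic identity \eqref{identity} in reverse: whereas Lemma~\ref{lem3} used an explicit trial map to bound $\Qlo$ from above by $\frac12 Q_{GL}$ minus a spin-deficit, here we will use the minimizer $\Psi_\eps$ of $\Ill(\eps;\beta)$ itself to produce admissible maps for the scalar Ginzburg--Landau problem and thereby bound $Q_{GL}$ from above.

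First I would apply \eqref{identity} to rewrite the energy of an arbitrary $\Psi=(\psi_+,\psi_-)\in\HH$ as
\begin{equation*}
E_\eps(\Psi) \;=\; \tfrac12\bigl[G_\eps(\stwo\,\psi_+)+G_\eps(\stwo\,\psi_-)\bigr] \;+\; \frac{\beta-1}{\eps^2}\into S^2\,dx,
\end{equation*}
which is verified by unpacking $G_\eps(\stwo\,\psi_\pm)=\into\bigl(|\nabla\psi_\pm|^2+\frac1{\eps^2}(|\psi_\pm|^2-\tfrac12)^2\bigr)$ and comparing with the expansion of $\frac14 F(\Psi)$ in \eqref{identity}. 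The symmetric boundary condition \eqref{sym} on $\Psi$ translates to $\stwo\,\psi_\pm|_{\partial\ddd}=e^{i\theta}$, so each $\stwo\,\psi_\pm$ is admissible in the variational problem defining $I_{GL}(\eps)$.

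Now I would specialize to a minimizer $\Psi_\eps$ of $\Ill(\eps;\beta)$ and use $G_\eps(\stwo\,\psi_\pm^\eps)\ge I_{GL}(\eps)$ in each of the two slots of the averaged sum above. Since $\beta<1$, moving the (negative) spin term to the left yields
\begin{equation*}
I_{GL}(\eps) \;\le\; \tfrac12\bigl[G_\eps(\stwo\,\psi_+^\eps)+G_\eps(\stwo\,\psi_-^\eps)\bigr] \;=\; \Ill(\eps;\beta) \;+\; \frac{1-\beta}{\eps^2}\into S_\eps^2\,dx.
\end{equation*}

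Finally I would subtract $\pi|\ln\eps|$ from both sides and pass to the limit $\eps\to 0$. Since, by definition \eqref{cores}, the quantities $I_{GL}(\eps)-\pi|\ln\eps|$ and $\Ill(\eps;\beta)-\pi|\ln\eps|$ converge to $Q_{GL}$ and $\Qll(\beta)$ respectively, taking $\liminf$ on the right and using subadditivity of $\liminf$ only on the remaining spin term gives exactly
\begin{equation*}
Q_{GL} \;\le\; \Qll(\beta) \;+\; (1-\beta)\liminf_{\eps\to 0}\frac{1}{\eps^2}\into S_\eps^2\,dx,
\end{equation*}
as claimed. There is essentially no obstacle here: the only point requiring mild care is verifying that the spin-term rearrangement has the correct sign (which works precisely because $\beta<1$), and that one uses $\liminf$ on the spin integral while the $Q$'s come from genuine limits, so no reverse-inequality issue arises.
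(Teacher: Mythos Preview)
Your proof is correct and follows essentially the same approach as the paper: use identity \eqref{identity} to rewrite $E_\eps(\Psi_\eps)$ as $\tfrac12[G_\eps(\stwo\psi_+^\eps)+G_\eps(\stwo\psi_-^\eps)]+\tfrac{\beta-1}{\eps^2}\int S_\eps^2$, bound the averaged Ginzburg--Landau energies from below by $I_{GL}(\eps)$ via admissibility of $\stwo\psi_\pm^\eps$, then subtract $\pi|\ln\eps|$ and pass to the limit. The only cosmetic difference is the order of presentation.
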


\begin{proof}
Let $\Psi^\eps$ minimize $\Ill(\eps;\beta)$ with $0<\beta<1$, and set
$v_\pm = \sqrt{2}\psi^\eps_\pm$.  Each $v_\pm$ is admissible for $I_{GL}(\eps)$, hence
\bea\nnn
I_{GL}(\eps) &\le
\frac12 \left(  G_\eps(v_+) + G_\eps(v_-)\right) \\
\label{l4comp}
 &= \int_{\ddd} \left[  \frac12 |\nabla\Psi^\eps|^2 + {1\over 2\eps^2}
        \left( \left(|\psi^\eps_+|^2 - \frac12\right)^2 + 
         \left(|\psi^\eps_-|^2 - \frac12\right)^2\right)\right]  \\ \nnn
  &= E_\eps (\Psi^\eps) + {(1-\beta)\over \eps^2}\int_{\ddd} S_\eps^2  \\ \nnn
  &= \Ill(\eps;\beta) + {(1-\beta)\over \eps^2}\int_{\ddd} S_\eps^2.
\end{align}
Subtracting $\pi|\ln\eps|$ from both sides and passing to the limit we obtain the desired conclusion.
\end{proof}

By putting together Lemma~\ref{lem3} with Lemma~\ref{lem4} we obtain interesting information about the minimizers of the problem $\Ill(\eps;\beta)$:  when $0<\beta<1$, a fixed amount of the core energy must come from the spin term.

\begin{cor}\label{cor5}
If $\Psi_\eps$ is any minimizer of $\Ill(\eps;\beta)$ with $0<\beta<1$, and $S_\eps$ is its spin, then
$$   \liminf_{\eps\to 0} {1\over\eps^2}\int_{\ddd}  S_\eps^2\, dx \ge {\pi\over 4}.  $$
\end{cor}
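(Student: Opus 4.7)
The plan is to simply chain together Lemma \ref{lem3} and Lemma \ref{lem4}, since they sandwich the quantity $Q_{GL} - \Qll(\beta)$ from both sides and the spin integral is the only flexible term.

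First I would rewrite Lemma \ref{lem4} in the form
\[
 (1-\beta)\, \liminf_{\eps\to 0} {1\over\eps^2}\int_{\ddd} S_\eps^2\, dx \;\ge\; Q_{GL} - \Qll(\beta),
\]
which is valid because $1-\beta>0$ in our regime. Next I would invoke Lemma \ref{lem3}, which gives $\Qll(\beta) \le Q_{GL} - (1-\beta){\pi/4}$, i.e.\ $Q_{GL} - \Qll(\beta) \ge (1-\beta){\pi/4}$. Substituting this lower bound into the right-hand side of the displayed inequality and dividing through by $(1-\beta)>0$ yields the claimed bound
\[
\liminf_{\eps\to 0} {1\over\eps^2}\int_{\ddd} S_\eps^2\, dx \;\ge\; {\pi\over 4}.
\]

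There is essentially no obstacle here: the hard work has already been done in Lemma \ref{lem3} (an explicit competitor built from the scalar Ginzburg--Landau minimizer together with the Brezis--Merle--Rivi\`ere asymptotic $\eps^{-2}\int (1-|u_\eps|^2)^2 \to 2\pi$) and in Lemma \ref{lem4} (the competitor argument using the identity \eqref{identity} which introduces the spin term with coefficient $(1-\beta)/\eps^2$). The only thing to check is that the subtraction of $\pi|\ln\eps|$ used in passing to the limit in Lemma \ref{lem4} is legitimate, but this is precisely what the definitions in \eqref{cores} encode, and the $\liminf$ on the left side of Lemma \ref{lem4} accommodates the possibility that the spin integral does not have a limit along the full sequence. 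Thus the corollary follows in two lines from the preceding lemmas.
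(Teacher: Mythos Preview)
Your proof is correct and matches the paper's approach exactly: the paper simply states that the corollary follows ``by putting together Lemma~\ref{lem3} with Lemma~\ref{lem4}'' without further detail, and your two-line chaining of these lemmas (dividing by $1-\beta>0$) is precisely the intended argument.
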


A similar calculation applies also to the fractional degree case:

\begin{lem}\label{lem6}
Assume $0<\beta<1$.  Then, for any minimizer $\Psi_\eps$ of $\Ilo(\eps;\beta)$,
its spin $S_\eps$ satisfies:
$$   \liminf_{\eps\to 0} {1\over\eps^2}\int_{\ddd}  S_\eps^2\, dx \ge {\pi\over 8}.
$$
\end{lem}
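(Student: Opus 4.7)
The plan is to mimic closely the proof of Lemma~\ref{lem4} and Corollary~\ref{cor5}, replacing the two-sided Ginzburg--Landau comparison by a one-sided one that exploits the trivial lower bound coming from the constant boundary condition in the second component.

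Given a minimizer $\Psi^\eps=(\psi^\eps_+,\psi^\eps_-)$ of $\Ilo(\eps;\beta)$ with $0<\beta<1$, I would set $v_\pm := \sqrt{2}\,\psi^\eps_\pm$, so that $v_+\vert_{\partial \ddd}=e^{i\theta}$ and $v_-\vert_{\partial \ddd}=1$. The first function is admissible for $I_{GL}(\eps)$, giving $I_{GL}(\eps)\le G_\eps(v_+)$, while the second trivially satisfies $G_\eps(v_-)\ge 0$. Adding these and invoking the identity \eqref{identity} to rewrite the resulting sum in terms of the Spinor energy yields
\begin{equation*}
   I_{GL}(\eps) \;\le\; G_\eps(v_+)+G_\eps(v_-)
   \;=\; 2\,E_\eps(\Psi^\eps) + \frac{2(1-\beta)}{\eps^2}\int_{\ddd} S_\eps^2
   \;=\; 2\,\Ilo(\eps;\beta)+\frac{2(1-\beta)}{\eps^2}\int_{\ddd} S_\eps^2.
\end{equation*}
This is the key inequality; dividing by $2$ and subtracting $\frac{\pi}{2}|\ln\eps|$ from both sides turns it, in the limit $\eps\to 0$, into
\begin{equation*}
   \tfrac{1}{2} Q_{GL} \;\le\; \Qlo(\beta)+(1-\beta)\liminf_{\eps\to 0}\frac{1}{\eps^2}\int_{\ddd} S_\eps^2\,dx.
\end{equation*}

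To close the argument, I would use the upper bound from Lemma~\ref{lem3}, namely $\Qlo(\beta)\le \tfrac{1}{2}Q_{GL}-(1-\beta)\tfrac{\pi}{8}$, and substitute it into the right-hand side. The $\tfrac{1}{2}Q_{GL}$ terms cancel and, after dividing by $(1-\beta)>0$, the claim $\liminf_{\eps\to 0}\eps^{-2}\int_{\ddd}S_\eps^2\,dx \ge \pi/8$ follows immediately.

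The routine bookkeeping --- the algebraic identity \eqref{identity} that converts the sum of two decoupled Ginzburg--Landau potentials into $\tfrac{1}{4\eps^2}F(\Psi^\eps)+\tfrac{1-\beta}{\eps^2}S_\eps^2$ --- is the only computational step, and it is identical to the one already used in Lemma~\ref{lem4}. The genuinely new input is simply the observation that one component of $\Psi^\eps$ carries degree $0$ on $\partial\ddd$, so its rescaling contributes nothing beyond a trivial nonnegativity. I do not foresee any real obstacle: the estimate \eqref{IGL}-type expansion for $Q_{GL}$ and the factor of $\tfrac{1}{2}$ mismatch (since $\Ilo$ blows up like $\tfrac{\pi}{2}|\ln\eps|$ rather than $\pi|\ln\eps|$) are precisely what produce the factor $\pi/8$ rather than $\pi/4$ obtained in the symmetric degree $[1,1]$ setting.
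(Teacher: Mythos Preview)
Your proposal is correct and is essentially identical to the paper's own proof: both use the admissibility of $v_+=\sqrt{2}\psi^\eps_+$ for $I_{GL}(\eps)$ together with $G_\eps(v_-)\ge 0$ to obtain $\tfrac12 I_{GL}(\eps)\le \Ilo(\eps;\beta)+\tfrac{(1-\beta)}{\eps^2}\int_{\ddd}S_\eps^2$, and then invoke Lemma~\ref{lem3} to conclude. The only difference is cosmetic---the paper writes the chain $\tfrac12 I_{GL}(\eps)\le \tfrac12 G_\eps(v_+)\le \tfrac12[G_\eps(v_+)+G_\eps(v_-)]$ while you add $G_\eps(v_-)\ge 0$ directly---but the resulting inequality and the remainder of the argument are the same.
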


\Pf
Let $\Psi^\eps$ minimize $\Ilo(\eps;\beta)$ and $v_\pm = \sqrt{2}\psi^\eps_\pm$.
Now only $v_+$ is admissible for the problem $I_{GL}(\eps)$, so
\begin{align*}
\frac12 I_{GL}(\eps) & \le  \frac12 G_\eps(v_+) \\
&\le \frac12 \left[ G_\eps(v_+) + G_\eps(v_-)\right] \\
& \le  E_\eps(\Psi^\eps) + {(1-\beta)\over\eps^2} \int_{\ddd} S_\eps^2\\
&=  \Ilo(\eps;\beta) + {(1-\beta)\over\eps^2} \int_{\ddd} S_\eps^2,
\end{align*}
where we have applied the same reasoning in the next-to-last line as in the computation 
(\ref{l4comp}) above.   Subtracting $\frac{\pi}2 |\ln\eps|$ from both sides and passing to the limit we have
$$  \frac12 Q_{GL} \le \Qlo(\beta) 
    + \liminf_{\eps\to 0} {(1-\beta)\over\eps^2} \int_{\ddd} S_\eps^2.  $$
The conclusion then follows from Lemma~\ref{lem3}.
\QED

\section{A different way to measure core energies}\label{sec3}

\setcounter{thm}{0}

In this section we consider entire solutions $\Psi$ in $\RR^2$, satisfying the integrability condition \eqref{BMRcond}, with given degree $[n_+,n_-]$ at infinity.  Measured on all of $\RR^2$, the energy (defined in \eqref{E}) of such a solution diverges.  However, when properly renormalized, there is a well-defined core energy, defined as the limit below:
\begin{lem}\label{coreenergy}  Let $\Psi$ solve \eqref{eqns} in $\RR^2$, satisfying \eqref{BMRcond}.  Then, the following limit exists:
\be\label{bounded} \lim_{R\to\infty} \left[ E(\Psi; {\mathbb D}_R)
         - {\pi\over 2} (n_+^2+n_-^2)\ln R\right].
\ee  
\end{lem}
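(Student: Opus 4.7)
The plan is to follow the Brezis--Merle--Rivi\`ere strategy \cite{BMR} developed for the scalar Ginzburg--Landau equation, combining a Pohozaev identity with pointwise decay at infinity. First, I would multiply each equation in \eqref{eqns} by $x\cdot\nabla\bar\psi_\pm$, take real parts, sum the two resulting identities, and integrate by parts over ${\mathbb D}_R$. The resulting Pohozaev identity reads
\begin{equation*}
R^2 \int_0^{2\pi}\left[\frac{1}{2R^2}|\partial_\theta\Psi|^2 - \frac{1}{2}|\partial_r\Psi|^2 + F(\Psi)\right] d\theta = 2\int_{{\mathbb D}_R} F(\Psi)\, dx,
\end{equation*}
where $F(\Psi)=\frac14(|\Psi|^2-1)^2+\frac{\beta}{4}(|\psi_+|^2-|\psi_-|^2)^2$. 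Expressing the circumferential integral of the full energy density in terms of the right-hand side of the Pohozaev identity then gives
\begin{equation*}
\frac{d}{dR} E(\Psi;{\mathbb D}_R) = R\int_0^{2\pi}|\partial_r\Psi|^2\, d\theta + \frac{2}{R}\int_{{\mathbb D}_R} F(\Psi)\, dx.
\end{equation*}

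Integrating this derivative from a large fixed $R_0$ to $R$ and writing $G(r):=\int_{{\mathbb D}_r}F(\Psi)\, dx$, I obtain
\begin{equation*}
E(\Psi;{\mathbb D}_R) = E(\Psi;{\mathbb D}_{R_0}) + \int_{{\mathbb D}_R\setminus{\mathbb D}_{R_0}}|\partial_r\Psi|^2\, dx + 2\int_{R_0}^R \frac{G(r)}{r}\, dr.
\end{equation*}
Taking $R\to\infty$ in the Pohozaev identity itself, and using the asymptotic profile $\psi_\pm\sim\rstwo e^{in_\pm\theta}$ from \cite{ABM} together with pointwise decay of $F$ and of $\partial_r\Psi$, the two boundary terms involving $|\partial_r\Psi|^2$ and $F(\Psi)$ vanish while the angular term converges to $\pi(n_+^2+n_-^2)/2$. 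This identifies the total mass
\begin{equation*}
C := \int_{\RR^2} F(\Psi)\, dx = \frac{\pi(n_+^2+n_-^2)}{4},
\end{equation*}
so that $2C$ reproduces exactly the logarithmic rate in \eqref{bounded}, and the splitting
\begin{equation*}
2\int_{R_0}^R \frac{G(r)}{r}\, dr = \frac{\pi(n_+^2+n_-^2)}{2}\ln\frac{R}{R_0} + 2\int_{R_0}^R \frac{G(r)-C}{r}\, dr
\end{equation*}
isolates the divergence.

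Subtracting $\frac{\pi(n_+^2+n_-^2)}{2}\ln R$ from both sides of the integrated identity therefore reduces \eqref{bounded} to two integrability statements: $|\partial_r\Psi|^2\in L^1(\RR^2)$ and $(G(r)-C)/r\in L^1([R_0,\infty))$. Since $|G(r)-C|\le\int_{|x|>r}F(\Psi)\, dx$, both follow from pointwise estimates of the form $F(\Psi)(x)+|\partial_r\Psi(x)|^2=O(|x|^{-2-\delta})$ for some $\delta>0$ --- the two-component counterparts of the bounds of \cite{BMR} and \cite{Sh} for scalar Ginzburg--Landau. This BMR-style decay is the principal technical obstacle. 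Given the fact, established in \cite{ABM}, that $|\Psi|$ is bounded below by a positive constant outside a large ball, the equations \eqref{eqns} linearize to a uniformly elliptic system for $|\Psi|^2-1$ and for the spin $S=\frac12(|\psi_-|^2-|\psi_+|^2)$; a standard elliptic and BMR-type bootstrap then upgrades the $L^2$-control provided by \eqref{BMRcond} into polynomial pointwise decay, after which everything else reduces to the integration of the identities above.
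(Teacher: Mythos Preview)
Your approach is correct but takes a genuinely different route from the paper's own proof.

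The paper argues directly from the polar decomposition $\psi_\pm=\rho_\pm e^{i(n_\pm\theta+\chi_\pm)}$ supplied by \cite{ABM}: after observing the orthogonality $\int_{{\mathbb D}_R\setminus{\mathbb D}_{R_0}}\nabla\chi_\pm\cdot\nabla\theta=0$, the authors write
\[
E(\Psi;{\mathbb D}_R\setminus{\mathbb D}_{R_0})-\tfrac{\pi}{2}(n_+^2+n_-^2)\ln\tfrac{R}{R_0}=\int_{{\mathbb D}_R\setminus{\mathbb D}_{R_0}} f
\]
for an explicit integrand $f$, and check that $f\in L^1(\RR^2\setminus{\mathbb D}_{R_0})$ using only the three estimates $|\rho_\pm-\rstwo|\le c/r^2$, $|\nabla\rho_\pm|\le c/r^3$, and $\int|\nabla\rho_\pm|^2+|\nabla\chi_\pm|^2<\infty$. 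Your Pohozaev route instead produces the first--order ODE $\tfrac{d}{dR}E(\Psi;{\mathbb D}_R)=R\int_0^{2\pi}|\partial_r\Psi|^2\,d\theta+\tfrac{2}{R}G(R)$ and identifies the logarithmic coefficient through the BMR--type quantization $\int_{\RR^2}F(\Psi)=\tfrac{\pi}{4}(n_+^2+n_-^2)$, which is an attractive dividend the paper's argument does not yield.

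Two remarks. First, you are working harder than necessary at the end: you do not need the pointwise bound $F(\Psi)+|\partial_r\Psi|^2=O(|x|^{-2-\delta})$, nor the elliptic bootstrap you sketch. The very estimates from \cite{ABM} that the paper quotes already give $F(\Psi)=O(|x|^{-4})$ (from $|\rho_\pm-\rstwo|\le c/r^2$), hence $|G(r)-C|=O(r^{-2})$ and the $r^{-1}$--weighted integral converges; and $|\partial_r\Psi|^2\in L^1$ follows from $|\nabla\rho_\pm|\le c/r^3$ together with $\nabla\chi_\pm\in L^2$. Second, when you pass to the limit in the Pohozaev identity to compute $C$, the boundary term $R^2\int_0^{2\pi}|\partial_r\Psi|^2\,d\theta$ involves $R^2\int_0^{2\pi}|\partial_r\chi_\pm|^2\,d\theta$, for which \cite{ABM} gives only $L^2$ control, not pointwise decay; you should take the limit along a sequence $R_k\to\infty$ on which $R_k\int_0^{2\pi}|\nabla\chi_\pm|^2\,d\theta\to 0$ (such a sequence exists since $r\int_0^{2\pi}|\nabla\chi_\pm|^2\,d\theta\in L^1(dr)$). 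This suffices, because $G(R)$ is monotone and its limit $C$ is determined by any single sequence.
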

For locally minimizing $\Psi$ we expect more.  In the case of the single Ginburg--Landau equation 
\be\label{GLeqn}  -\Delta U = (1-|U|^2)U,
\ee
 it is known from Shafrir \cite{Sh} that the only nontrivial solutions which are locally minimizing for the energy
$$  G_*(U; \Omega) = \int_\Omega \left\{
\frac12 |\nabla  U|^2 + \frac14 (|U|^2-1)^2\right\}, $$
 have degree $\deg(U,\infty)=\pm 1$.  And for those (unique,  by \cite{M2}) local minimizers, the analogous limit \eqref{bounded} coincides with the vortex core energy defined via the symmetric Dirichlet problem \eqref{IGL},
$$  Q_{GL}=\lim_{R\to\infty} \left(  G_*(u; {\mathbb D}_R)
         - \pi\ln R\right).
$$
We will show that the same is true for $\Psi$:

\begin{prop}\label{n01}
A nontrivial local minimizer of \eqref{eqns} satisfying \eqref{BMRcond} must have degrees $n_\pm\in \{0,\pm 1\}$. 
\end{prop}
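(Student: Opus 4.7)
The argument follows Sandier's splitting strategy \cite{Sa} for the scalar Ginzburg--Landau equation, adapted to the two-component system. Suppose, for contradiction, that $\Psi$ is a nontrivial locally minimizing solution with $\max(|n_+|,|n_-|)\ge 2$; by the obvious symmetries (conjugation in each component, and the $\psi_+\leftrightarrow\psi_-$ interchange) we may assume $n_+\ge 2$. The plan is to exhibit, for $R$ large, a competitor $\tilde\Psi$ on $\mathbb{D}_R$ that agrees with $\Psi$ on $\partial\mathbb{D}_R$ but splits the compound singularity into $n_+ + |n_-|$ widely separated simple vortices of types $[1,0]$ and $[0,\mathrm{sgn}(n_-)]$, and to show that its energy is strictly smaller than $E(\Psi;\mathbb{D}_R)$ once $R$ is large enough.

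By Lemma~\ref{coreenergy}, $E(\Psi;\mathbb{D}_R)=\tfrac{\pi}{2}(n_+^2+n_-^2)\ln R+Q^*+o(1)$ for some finite constant $Q^*$. Since $F(\Psi)\in L^1(\RR^2)$ by \eqref{BMRcond}, Fubini's theorem yields a sequence $R_k\to\infty$ along which $\int_{\partial\mathbb{D}_{R_k}}F(\Psi)\,d\sigma\to 0$, so on these circles each $|\psi_\pm|$ is close to $1/\sqrt{2}$ in $L^1$-average. Using the BMR-type uniform decay of $|\psi_\pm|$ at infinity (from \cite{BMR,ABM}), a standard lifting and radial extension (cf.~\cite{BBH2}) produces a map on the thin annulus $\mathbb{D}_{R_k}\setminus\mathbb{D}_{R_k-1}$ equal to $\Psi$ on $\partial\mathbb{D}_{R_k}$ and equal to the canonical $\Sigma$-valued map $g_k=\tfrac{1}{\sqrt{2}}(e^{i(n_+\theta+\alpha_+^k)},e^{i(n_-\theta+\alpha_-^k)})$ on $\partial\mathbb{D}_{R_k-1}$, at an energy cost uniformly bounded in $k$.

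Inside $\mathbb{D}_{R_k-1}$ construct $\tilde\Psi$ as follows. Select $n_+$ points $a_1,\dots,a_{n_+}$ and $|n_-|$ points $b_1,\dots,b_{|n_-|}$ on the circle $|x|=R_k/10$, all pairwise at distance at least $cR_k$. On each $B(a_j,1)$ glue in a translated, phase-rotated copy of the locally minimizing $[1,0]$ entire solution from \cite{ABM}, and on each $B(b_\ell,1)$ glue a $[0,\mathrm{sgn}(n_-)]$ entire solution. Off the cores, set $\tilde\Psi=\tfrac{1}{\sqrt{2}}(e^{i\Phi_+},e^{i\Phi_-})$ with $\Phi_+=\sum_j\arg(x-a_j)+\alpha_+^k$ and $\Phi_-=\mathrm{sgn}(n_-)\sum_\ell\arg(x-b_\ell)+\alpha_-^k$; match smoothly on the core boundaries and on $\partial\mathbb{D}_{R_k-1}$ at $O(1)$ cost each. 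Since $|\tilde\Psi|\equiv 1$ and $|\psi_+|=|\psi_-|$ off the cores, only the Dirichlet term contributes there, and the standard punctured-disk computation gives
\begin{align*}
\tfrac{1}{4}\!\int\!(|\nabla\Phi_+|^2+|\nabla\Phi_-|^2)
&=\tfrac{\pi}{2}(n_+^2+n_-^2)\ln R_k-\tfrac{\pi}{2}\!\sum_{i\ne j}\ln|a_i-a_j|-\tfrac{\pi}{2}\!\sum_{k\ne\ell}\ln|b_k-b_\ell|+O(1)\\
&=\tfrac{\pi}{2}(n_++|n_-|)\ln R_k+O(1),
\end{align*}
using $|a_i-a_j|,|b_k-b_\ell|\ge cR_k$ in the second line. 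Adding the finite core energies of the plugged-in entire solutions (bounded by Lemma~\ref{coreenergy}) and the bounded annular cost yields $E(\tilde\Psi;\mathbb{D}_{R_k})\le\tfrac{\pi}{2}(n_++|n_-|)\ln R_k+C$. Since $n_+\ge 2$ forces $n_+^2>n_+$ strictly, comparison with Lemma~\ref{coreenergy} gives $E(\Psi;\mathbb{D}_{R_k})-E(\tilde\Psi;\mathbb{D}_{R_k})\ge\tfrac{\pi}{2}[(n_+^2-n_+)+(n_-^2-|n_-|)]\ln R_k-C'\to+\infty$, contradicting the local minimality of $\Psi$.

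\textbf{Main obstacle.} The most delicate step is the uniformly-bounded boundary adjustment on the thin annulus $\mathbb{D}_{R_k}\setminus\mathbb{D}_{R_k-1}$: both phases must be lifted simultaneously and interpolated to $g_k$, which requires $|\psi_\pm|$ to be bounded away from zero on a suitable sequence of radii. This in turn rests on BMR-type \emph{a priori} decay estimates for solutions of \eqref{eqns} satisfying \eqref{BMRcond}, as established in \cite{ABM}; once those are in hand, the renormalized-energy bookkeeping in the split construction is routine.
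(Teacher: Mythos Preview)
Your overall strategy is correct and matches the paper's: compare the actual energy growth $\tfrac{\pi}{2}(n_+^2+n_-^2)\ln R$ from Lemma~\ref{coreenergy} against a competitor whose energy grows only like $\tfrac{\pi}{2}(|n_+|+|n_-|)\ln R$, and use $n^2>|n|$ when $|n|\ge 2$ to derive a contradiction. However, you take a substantially more laborious route in two places.

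First, the boundary adjustment. You invoke a Fubini/good-radius argument on a thin annulus of width $1$, and flag this as the ``main obstacle.'' But the paper already has Lemma~\ref{patchtoo}, which does exactly this job cleanly on the annulus $\mathbb{D}_R\setminus\mathbb{D}_{R/2}$: using the pointwise decay estimates \eqref{est1}--\eqref{est3} from \cite{ABM}, one interpolates $\Psi|_{\partial\mathbb{D}_R}$ to the symmetric data $\tfrac{1}{\sqrt{2}}(e^{i(n_+\theta+\phi_+)},e^{i(n_-\theta+\phi_-)})$ on $\partial\mathbb{D}_{R/2}$ with energy $E(\Psi;\mathbb{D}_R\setminus\mathbb{D}_{R/2})+o(1)$. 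The Fubini step is unnecessary once you have pointwise control of $|\psi_\pm|-\tfrac{1}{\sqrt{2}}$, and mixing the two approaches obscures what is actually routine.

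Second, the interior competitor. You build an explicit $(n_++|n_-|)$-vortex configuration with glued-in entire solutions and then appeal to the renormalized-energy expansion. The paper does something much simpler: inside $\mathbb{D}_{R/2}$ it sets $\Phi_R=\tfrac{1}{\sqrt{2}}(U_R^+,U_R^-)$ where each $U_R^\pm$ is a \emph{scalar} Ginzburg--Landau minimizer in $\mathbb{D}_{R/2}$ with boundary data $e^{i(n_\pm\theta+\phi_\pm)}$. By \cite{BBH2}, $G_*(U_R^\pm;\mathbb{D}_{R/2})=\pi|n_\pm|\ln(R/2)+O(1)$, and by the identity \eqref{identity} the two-component energy differs from $\tfrac{1}{2}(G_*(U_R^+)+G_*(U_R^-))$ only by a bounded spin term. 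This yields $E(\Phi_R;\mathbb{D}_R)\le\tfrac{\pi}{2}(|n_+|+|n_-|)\ln R+O(1)$ with no multi-vortex bookkeeping at all.

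In short: your argument would go through, but the paper's proof is shorter because it recycles Lemma~\ref{patchtoo} for the boundary step and offloads the interior estimate entirely to the scalar BBH result rather than reconstructing a multi-vortex competitor by hand.
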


\begin{prop}\label{prop1}  For any $\beta>0$ and for any given degrees $n_\pm\in \{0,\pm 1\}$, if $\Psi$ is an entire solution satisfying (\ref{BMRcond}), then
$$  \lim_{R\to\infty} \left[  E(\Psi; {\mathbb D}_R)
         - {\pi\over 2} (|n_+| + |n_-|)\ln R\right] \ge Q_{[n_+,n_-]}.  $$
If in addition $\Psi$ is a local minimizer of energy, then equality holds in the above.
\end{prop}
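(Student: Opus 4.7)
My plan is to prove the lower bound by using $\Psi$ restricted to ${\mathbb D}_R$ as an (approximate) competitor for $J_{[n_+,n_-]}(1,R(1+\delta);\beta)$, and to prove the matching upper bound under local minimality by using a minimizer of $J_{[n_+,n_-]}(1,R(1-\delta);\beta)$, extended outward to ${\mathbb D}_R$, as a competitor against $\Psi$. The bridge between the entire solution and the Dirichlet problem in each case is an interpolation on a thin annulus that adjusts the boundary values. The scaling $J_{[n_+,n_-]}(1,\rho;\beta)=I_{[n_+,n_-]}(1/\rho;\beta)$ from \eqref{Jdef} converts the $\eps\to 0$ limit defining $Q_{[n_+,n_-]}$ in \eqref{cores} into the expansion $J_{[n_+,n_-]}(1,\rho;\beta)=\tfrac{\pi}{2}(|n_+|+|n_-|)\ln\rho+Q_{[n_+,n_-]}+o(1)$ as $\rho\to\infty$; since $n_\pm\in\{0,\pm 1\}$ forces $n_+^2+n_-^2=|n_+|+|n_-|$, the normalizations match those of \eqref{bounded}.

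The key ingredient I would isolate as an annular interpolation lemma: given two $\Sigma$-valued boundary profiles of degree $[n_+,n_-]$ of the form $\tfrac{1}{\sqrt{2}}(e^{i(n_+\theta+h_+^{(j)}(\theta))},e^{i(n_-\theta+h_-^{(j)}(\theta))})$, $j=1,2$, with $\|h_\pm^{(j)}\|_{H^1(\so)}$ small, one can join them across ${\mathbb D}_{r_2}\setminus{\mathbb D}_{r_1}$ by a $\Sigma$-valued map whose Dirichlet energy equals $\tfrac{\pi}{2}(|n_+|+|n_-|)\ln(r_2/r_1)+O(\|h_\pm^{(j)}\|_{H^1}^2)$. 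I would build this by linearly interpolating phases in the radial variable, $\Xi(r,\theta)=\tfrac{1}{\sqrt{2}}(e^{i(n_\pm\theta+\mu(r)h_\pm^{(1)}+(1-\mu(r))h_\pm^{(2)})})$ with $\mu$ a smooth cutoff; since $\Xi\in\Sigma$ the potential $F(\Xi)$ vanishes identically, the leading $\ln(r_2/r_1)$ comes from the pure $e^{in_\pm\theta}$ part, and all cross and radial-derivative terms are controlled by $\|h_\pm^{(j)}\|_{H^1(\so)}^2$ (after Fourier expansion of $h_\pm^{(j)}$ in $\theta$).

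To prepare $\Psi$, I would combine \eqref{BMRcond} with the asymptotic analysis of \cite{BMR} extended in \cite{ABM} to the system \eqref{eqns}: the energy bound together with an averaging argument (Fubini plus the finiteness of $\int_{\RR^2} F(\Psi)$) produces a sequence $R_k\to\infty$ along which $\int_{\partial{\mathbb D}_{R_k}}F(\Psi)\,d\sigma\to 0$ and the $H^1(\so)$-defect of $\Psi|_{\partial{\mathbb D}_{R_k}}$ from a $\Sigma$-valued map $\tfrac{1}{\sqrt 2}(e^{i(n_+\theta+\alpha_++g_{+,k})},e^{i(n_-\theta+\alpha_-+g_{-,k})})$ vanishes, for constants $\alpha_\pm$ depending only on $\Psi$; the $U(1)\times U(1)$ invariance of $E$ then lets me absorb $\alpha_\pm$ into $\Psi$. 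Joining $\Psi$ on ${\mathbb D}_{R_k}$ with the interpolation lemma applied to the annulus $\{R_k<|x|<R_k(1+\delta)\}$ produces a competitor with symmetric Dirichlet data on $\partial{\mathbb D}_{R_k(1+\delta)}$, so
\begin{align*}
J_{[n_+,n_-]}(1,R_k(1+\delta);\beta)\le E(\Psi;{\mathbb D}_{R_k})+\tfrac{\pi}{2}(|n_+|+|n_-|)\ln(1+\delta)+o(1).
\end{align*}
Inserting the $J$-expansion on the left and rearranging yields $E(\Psi;{\mathbb D}_{R_k})-\tfrac{\pi}{2}(|n_+|+|n_-|)\ln R_k\ge Q_{[n_+,n_-]}-o(1)$, and Lemma~\ref{coreenergy} upgrades this subsequential bound to the full limit as $R\to\infty$.

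For equality under local minimality, I reverse the construction: take a minimizer $\Phi^*$ of $J_{[n_+,n_-]}(1,R_k(1-\delta);\beta)$, extend across $\{R_k(1-\delta)<|x|<R_k\}$ by the interpolation lemma to match $\Psi|_{\partial{\mathbb D}_{R_k}}$ on the outer circle, and invoke local minimality of $\Psi$ on ${\mathbb D}_{R_k}$; a parallel computation gives the matching upper bound. The principal technical obstacle is the quantitative $H^1$-control on $\Psi|_{\partial{\mathbb D}_{R_k}}$ along a sequence of radii, which is the two-component analogue of the Brezis--Merle--Rivi\`ere asymptotic analysis and is the part of the argument that relies most heavily on the results of \cite{ABM}. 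The remainder is routine annular bookkeeping familiar since \cite{BBH2}.
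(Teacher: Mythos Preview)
Your strategy is the same as the paper's: for the lower bound, patch $\Psi$ near $\partial\mathbb{D}_R$ to symmetric data and use the result as a competitor for $J_{[n_+,n_-]}$; for the upper bound, glue a minimizer of $J_{[n_+,n_-]}$ on a slightly smaller disk to a patch matching $\Psi$ on $\partial\mathbb{D}_R$ and invoke local minimality. The difference is in how the patching is implemented. The paper (Lemmas~\ref{patch} and~\ref{patchtoo}) interpolates \emph{both} the moduli $\rho_\pm$ and the phase corrections $\chi_\pm$ via a logarithmic cutoff on the thick annulus $\{R/2<|x|<R\}$, and controls the energy defect directly from the pointwise decay \eqref{est1}--\eqref{est2} and the integrability \eqref{est3} supplied by \cite{ABM}; this works for \emph{every} large $R$, so no good-radii selection or appeal to Lemma~\ref{coreenergy} is needed to pass from a subsequence to the full limit. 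Your $\Sigma$-valued phase-only interpolation on thin annuli $\{R_k<|x|<R_k(1+\delta)\}$, combined with Fubini to select favorable $R_k$, is a legitimate alternative, but two points deserve care: (i) since $\Psi|_{\partial\mathbb{D}_{R_k}}$ is not exactly $\Sigma$-valued, your interpolation lemma as stated does not quite apply, and you must first insert a short radial adjustment of the moduli to $1/\sqrt{2}$ (routine, with cost controlled by the smallness of $F(\Psi)$ and of $\nabla\rho_\pm$ on the chosen circle); (ii) the $H^1(\so)$-smallness of the phase defects along $R_k$ does not follow from \eqref{BMRcond} and Fubini alone---it genuinely requires the gradient information \eqref{est3} from \cite{ABM}, and once that is granted you in fact have the paper's uniform estimates for all large $R$, which renders the good-radii step superfluous.
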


We begin by proving Lemma~\ref{coreenergy}.

\begin{proof}[Proof of Lemma~\ref{coreenergy}]
By the estimates in \cite{ABM}, there exists $R_0>0$ for which the solution  $\Psi(x)$  admits a decomposition for $|x|\ge R_0$ in the following form:
\begin{equation}\label{decomp}\left.
\begin{gathered}
\psi_\pm(x) = \rho_\pm(x)\exp [i\alpha_\pm(x)], \qquad
\alpha_\pm(x)= n_\pm\theta +\chi_\pm(x), \\
\text{with
$\chi_\pm(x)\to \phi_\pm$ (constants) uniformly as $|x|\to\infty$. }
\end{gathered}
\right\}
\end{equation}
Without loss of generality we may take $\phi_\pm=0$, so 
$\chi_\pm(x)\to 0$ uniformly as $|x|\to\infty$. 
Moreover, the estimates in \cite{ABM} imply that for large $r$,
\begin{gather}\label{est1}
\left|\rho_\pm -\rstwo\right| \le {c\over r^2} \\
\label{est2}
|\nabla\rho_\pm(x)|\le {c\over r^3} \\
\label{est3}
\int_{|x|\ge R_0}\left[ |\nabla \rho_\pm|^2 + |\nabla\chi_\pm|^2\right] <\infty,
\end{gather}
for $R_0$ sufficiently large that the decomposition \eqref{decomp} holds.

First, we observe that for any $R>R_0$, by an integration by parts we have:
$$  \int_{{\mathbb D}_R\setminus {\mathbb D}_{R_0}} \nabla \chi_\pm\cdot\nabla\theta =0.  $$
Thus, 
$$  E(\Psi; {\mathbb D}_R\setminus {\mathbb D}_{R_0}) - {\pi\over 2}(n_+^2+ n_-^2)\ln{R\over R_0}
     =  \int_{{\mathbb D}_R\setminus {\mathbb D}_{R_0}} f,  $$
with
\begin{multline*}  f = \sum_\pm \left[ |\nabla \rho_\pm|^2 
+ \left(\rho_\pm^2-{1\over\sqrt{2}}\right) {n_\pm^2\over r^2} + \rho_\pm^2 |\nabla\chi_\pm|^2  +
2n_\pm \left(\rho_\pm-{1\over\sqrt{2}}\right)\nabla\theta\cdot\nabla\chi_\pm]
\right] \\  + 
\frac14 (\rho_+^2 + \rho_-^2-1)^2 + {\beta\over 4} (\rho_+^2-\rho_-^2)^2. 
\end{multline*}
Using the estimates \eqref{est1}--\eqref{est3}, $f$ is integrable in $\RR^2\setminus {\mathbb D}_{R_0}$;  writing 
$$   E(\Psi; {\mathbb D}_R) - {\pi\over 2}(n_+^2+ n_-^2)\ln R =
   E(\Psi;  {\mathbb D}_{R_0}) - {\pi\over 2}(n_+^2+ n_-^2)\ln R_0
     + \int_{{\mathbb D}_R\setminus {\mathbb D}_{R_0}} f,
$$
we conclude that the limit $R\to\infty$ exists.
\end{proof}

Next, we do a patching argument as in \cite{Sh}.

\begin{lem}\label{patch}
Let $\Psi$ be an entire solution of (\ref{eqns}) satisfying (\ref{BMRcond}).
Then, there exists a family $\tilde\Psi_R\in H^1({\mathbb D}_R;\CC^2)$ of functions so that
\begin{align*}
&\tilde\Psi_R(x) = \Psi(x) \qquad \mbox{for $|x|\le {R\over 2}$,} \\
&  \tilde \Psi_R(x) = \rstwo\left[ e^{i(n_+\theta+\phi_+)}, e^{i(n_-\theta+\phi_-)} \right],
\qquad\mbox{on $|x|=R$, for constants $\phi_\pm\in \RR$,} \\
&  
\int_{{\mathbb D}_R} |\nabla\tilde\Psi_R|^2 = \int_{{\mathbb D}_R} |\nabla\Psi|^2 + o(1), \quad
\int_{{\mathbb D}_R} (|\tilde\Psi_R|^2-1)^2 = \int_{{\mathbb D}_R} (|\Psi|^2-1)^2 +o(1), \quad \\
 &
\int_{{\mathbb D}_R} \tilde S_R^2 = \int_{{\mathbb D}_R} S^2 +o(1),
\end{align*}
as $R\to \infty$, where $\tilde S_R=\frac12 (|\psi_{R,-}|^2 - |\psi_{R,+}|^2)$.
In particular, 
$$E(\tilde\Psi_R; {\mathbb D}_R) = E(\Psi; {\mathbb D}_R) + o(1)$$
as $R\to\infty$.
\end{lem}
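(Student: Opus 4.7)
The plan is to construct $\tilde\Psi_R$ by leaving $\Psi$ unchanged on ${\mathbb D}_{R/2}$ and interpolating, on the annulus $A_R:={\mathbb D}_R\setminus{\mathbb D}_{R/2}$, between $\Psi$ and the model $\rstwo(e^{in_+\theta},e^{in_-\theta})$ on $\partial{\mathbb D}_R$, carrying out the interpolation in the polar variables of the decomposition \eqref{decomp} (with $\phi_\pm=0$, as is allowed by the normalization noted earlier). Interpolating in polar variables preserves the winding number and reproduces the dominant $n_\pm^2/r^2$ part of the gradient density exactly, which is essential since $|\nabla\Psi|^2$ is not itself small on $A_R$. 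Pick a radial cutoff $\eta_R\in C^\infty([0,\infty))$ with $\eta_R\equiv 1$ on $[0,R/2]$, $\eta_R\equiv 0$ on $[R,\infty)$ and $|\eta_R'|\le C/R$, and set
\[
\tilde\rho_\pm := \eta_R\rho_\pm+(1-\eta_R)\rstwo,\qquad \tilde\chi_\pm := \eta_R\chi_\pm,\qquad
\tilde\psi_{R,\pm} := \tilde\rho_\pm\,e^{i(n_\pm\theta+\tilde\chi_\pm)}.
\]
By construction $\tilde\Psi_R=\Psi$ on ${\mathbb D}_{R/2}$ and $\tilde\Psi_R|_{\partial{\mathbb D}_R}=\rstwo(e^{in_+\theta},e^{in_-\theta})$, so only $A_R$ contributes to the three comparisons.

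The potential and spin estimates are immediate. By \eqref{est1}, $|\rho_\pm-\rstwo|\le Cr^{-2}$, and since $\tilde\rho_\pm$ is a convex combination of $\rho_\pm$ and $\rstwo$ the same bound transfers to it. Hence $(|\Psi|^2-1)^2$, $(|\tilde\Psi_R|^2-1)^2$, $S^2$ and $\tilde S_R^2$ are all $O(r^{-4})$ on $A_R$, so their integrals over $A_R$ are $O(R^{-2})=o(1)$, giving the claimed asymptotic equalities for the two $L^2$-type integrals.

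For the gradient, I would expand $|\nabla\psi_\pm|^2=|\nabla\rho_\pm|^2+\rho_\pm^2|\nabla\alpha_\pm|^2$ with $\alpha_\pm=n_\pm\theta+\chi_\pm$, and analogously for $\tilde\Psi_R$, then compare term by term on $A_R$. The radial pieces $|\nabla\rho_\pm|^2,|\nabla\tilde\rho_\pm|^2$ are both pointwise $\le C(r^{-6}+R^{-2}r^{-4})$ using \eqref{est1}--\eqref{est2} and $|\eta_R'|\le C/R$, giving $O(R^{-2})$ after integration. Writing $|\nabla\alpha_\pm|^2=n_\pm^2/r^2+(2n_\pm/r^2)\partial_\theta\chi_\pm+|\nabla\chi_\pm|^2$ and the analog for $\tilde\alpha_\pm$, the $n_\pm^2/r^2$ leading pieces coincide; the difference of their coefficients $\rho_\pm^2$ and $\tilde\rho_\pm^2$ is $O(r^{-2})$, contributing $O(R^{-2})$. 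The remaining pieces tend to zero: $\int_{A_R}|\nabla\chi_\pm|^2\to 0$ by \eqref{est3}; $\int_{A_R}|\nabla\tilde\chi_\pm|^2\to 0$ by \eqref{est3} together with $|\nabla\tilde\chi_\pm|\le\eta_R|\nabla\chi_\pm|+|\eta_R'||\chi_\pm|$ and $\sup_{A_R}|\chi_\pm|\to 0$; and the cross terms $\int_{A_R}r^{-2}|\partial_\theta\chi_\pm|\,dx$ are $o(1)$ by Cauchy--Schwarz against $\int_{A_R}r^{-2}dx=2\pi\ln 2$. Collecting these bounds yields $\int_{{\mathbb D}_R}|\nabla\tilde\Psi_R|^2=\int_{{\mathbb D}_R}|\nabla\Psi|^2+o(1)$, and summing the three renormalized integrals gives $E(\tilde\Psi_R;{\mathbb D}_R)=E(\Psi;{\mathbb D}_R)+o(1)$.

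The main difficulty is conceptual rather than technical: the gradient density $|\nabla\Psi|^2$ on $A_R$ is not small (the winding contributes $O(1)$ uniformly in $R$), so a naive cutoff in Cartesian coordinates would fail. The resolution is to build the same winding into $\tilde\Psi_R$ by interpolating in the polar variables $\rho_\pm,\chi_\pm$, so that the dominant $n_\pm^2/r^2$ parts of the two gradient integrals cancel exactly and only the $o(1)$ perturbations controlled by \eqref{est1}--\eqref{est3} remain.
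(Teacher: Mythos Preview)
Your proposal is correct and follows essentially the same route as the paper: interpolate in the polar variables $(\rho_\pm,\chi_\pm)$ of the decomposition \eqref{decomp} via a radial cutoff on the annulus $R/2\le |x|\le R$, then compare term by term using the estimates \eqref{est1}--\eqref{est3}. The only cosmetic difference is that the paper uses the explicit logarithmic cutoff $L_R(r)=\ln(2r/R)/\ln 2$ in place of your generic smooth $\eta_R$, which plays the role of your $1-\eta_R$; the resulting estimates are line-for-line the same.
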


\Pf
We employ the same decomposition \eqref{decomp} for $\psi_\pm(x)$, 
$|x|\ge R_0$, as in the proof of Lemma~\ref{coreenergy}.  Define the cutoff,
$$   L(r)=L_R(r)=
  \begin{cases} 0 &  \text{if $r\le {R\over 2}$}, \\
     {\ln (2r/R) \over \ln 2}, & \text{if ${R\over 2}\le r\le R$,}  \\
          1 & \text{if $r\ge R$}.
          \end{cases}
$$  
We define our modification
\begin{equation}\label{psitilde}
\left.
\begin{gathered}
\tilde\Psi_R=\left(\tilde\psi_{R,+},\tilde\psi_{R,-}\right), \qquad
\tilde\psi_{R,\pm}=\tilde\rho_\pm(x)\exp[i\tilde\alpha_\pm], \\
  \tilde\rho_\pm(x)= \rstwo L(r) + (1-L(r))\rho_\pm(x), \qquad
   \tilde\alpha_\pm(x)= n_\pm\theta + (1-L(r))\chi_\pm (x). 
   \end{gathered}
 \right\}   
\end{equation}
Then,
\begin{align*}
|\nabla\tilde\rho_\pm|^2 - |\nabla\rho_\pm|^2
&= {1\over (\ln 2)^2 r^2} \left(\rstwo - \rho_\pm\right)^2
              + (L^2-2L) |\nabla \rho_\pm|^2  \\
         &\qquad
             + {2\over r\ln 2} (1-L) \left(\rstwo - \rho_\pm\right)
                    {\partial\rho_\pm\over\partial r}.
\end{align*}
By combining the estimates  $0\le L(x)\le 1$, $\rho_+^2+\rho_-^2<1$, $|\nabla\rho_\pm|\le Cr^{-3}$ (see \cite{ABM}) with (\ref{est3}),
we obtain
$$  \int_{R/2<|x|<R} \left|  |\nabla\tilde\rho_\pm|^2 - |\nabla\rho_\pm|^2 \right| dx
   \to 0  $$
as $R\to \infty$.

Let $C=[\ln 2]^{-1}$.  Then,
\begin{align*}
|\nabla\tilde \alpha_\pm|^2 - |\nabla\alpha_\pm|^2
  &= |\nabla\chi_\pm|^2 [(1-L)^2 -1] + {C\over r^2}\chi_\pm^2
      -2L{n_\pm\over r} (\nabla\chi_\pm\cdot \hat\theta) \\
   &\qquad    - {2C\over r}(1-L)\chi_\pm (\nabla\chi_\pm\cdot \hat r).
\end{align*}
We expand
\bea\nnn
\tilde\rho_\pm^2|\nabla\tilde\alpha_\pm|^2 - 
        \rho_\pm^2|\nabla\alpha_\pm|^2
&= \rho_\pm^2\left(|\nabla\tilde\alpha_\pm|^2-|\nabla\alpha_\pm|^2\right)
\\ \nnn
&\qquad
+|\nabla\alpha_\pm|^2\left(\tilde\rho_\pm^2 - \rho_\pm^2\right)  \\
\label{exp}
& \qquad
+ \left(\tilde\rho_\pm^2 - \rho_\pm^2\right)
   \left(|\nabla\tilde\alpha_\pm|^2-|\nabla\alpha_\pm|^2\right).
\end{align}
Defining $A=\{R/2<|x|<R\}$ and taking each term separately,
\begin{align*}
\left|\int_A 
\rho_\pm^2\left(|\nabla\tilde\alpha_\pm|^2-|\nabla\alpha_\pm|^2\right)\right|
&\le
\int_A\left||\nabla\tilde\alpha_\pm|^2-|\nabla\alpha_\pm|^2\right| \\
&\le  C\int_A \left[ |\nabla\chi_\pm|^2  + {1\over r}|\nabla\chi_\pm|
              + {1\over r^2} \chi_\pm^2\right],
\end{align*}
with constant $C$ independent of $R$.
The first term tends to zero by (\ref{est3}) directly.  For the second,
$$   \int_A {1\over r}|\nabla\chi_\pm|
        \le \left[ 2\pi\int_{R/2}^R {dr\over r} 
               \int_{|x|\ge R} |\nabla\chi_\pm|^2\right]^{1/2}
               \le 
                  \left[2\pi\ln 2 \int_{|x|\ge R} |\nabla\chi_\pm|^2\right]^{1/2}\to 0,
$$
again by (\ref{est3}).  For the last term,
$$  \int_A {1\over r^2}\chi_\pm^2
            \le  2\pi\int_{R/2}^R {dr\over r} \, \sup_{|x|\ge R/2} \chi_\pm^2
                  =2\pi\ln 2\, \sup_{|x|\ge R/2} \chi_\pm^2 \to 0,  $$
since $\chi_\pm\to 0$ uniformly as $|x|\to\infty$.  Thus, the first term of (\ref{exp}) tends to zero as $R\to\infty$.

For the second term of (\ref{exp}), note that by (\ref{est1}),
$$  \int_A |\tilde\rho_\pm^2-\rho_\pm^2|\, |\nabla\alpha_\pm|^2
\le \int_A {c\over r^2}\left( {n_\pm^2\over r^2} + |\nabla\chi_\pm|^2\right)
\to 0, $$
again using (\ref{est3}) for the second piece.
The third integral is estimated in a similar way to the first one and also vanishes as $R\to\infty$.  In conclusion,
$$  \left|\int_A  \tilde\rho_\pm^2|\nabla\tilde\alpha_\pm|^2 - 
              \rho_\pm^2|\nabla\alpha_\pm|^2 \right|\to 0.  $$
The estimate (\ref{est1}) also implies
$$  (|\Psi|^2-1)^2 + 4S^2 = O\left(r^{-4}\right)\quad \text{and}\quad  (|\tilde\Psi_R|^2-1)^2 + 4\tilde S_R^2 = O\left(r^{-4}\right).$$
Therefore, we have
$$  \int_A |(|\Psi|^2 -1)^2 - (|\tilde\Psi_R|^2-1)^2| + |S^2 - \tilde S_R^2| \to 0, $$
as $R\to \infty$.
Putting these results together we obtain
$$  |E(\tilde\Psi_R; {\mathbb D}_R) - E(\Psi; {\mathbb D}_R)| = 
    |E(\tilde\Psi_R; A) - E(\Psi; A)|\to 0$$
    as $R\to\infty$, which completes the proof of the lemma.
\QED

\bigskip

Note that by the same procedure as above, but with the choice
$$   \hat\rho_\pm(x) = \rstwo (1-L_R(r)) + L_R(r)\rho_\pm(x), \qquad
     \hat\alpha_\pm(x) = n_\pm\theta + L_R(r)\chi_\pm,  $$
(and $L_R$ as in the proof of Lemma~\ref{patch},)
 we obtain the opposite patching result, connecting a given solution outside a large ball ${\mathbb D}_R$ to the symmetric boundary condition on $\partial {\mathbb D}_{R/2}$:

\begin{lem}\label{patchtoo}
Let $\Psi$ be an entire solution of (\ref{eqns}) satisfying (\ref{BMRcond}).
Then, there exists a family $\hat\Psi_R\in H^1({\mathbb D}_R\setminus {\mathbb D}_{R/2};\mathbb C^2)$ of functions so that
\begin{align*}
&\hat\Psi_R(x) = \Psi(x) \qquad \mbox{for $|x|=R$,} \\
&  \hat \Psi_R(x) = \rstwo\left( e^{i(n_+\theta+\phi_+)}, e^{i(n_-\theta+\phi_-)} \right),
\qquad\mbox{on $|x|=R/2$, for constants $\phi_\pm\in \RR$,} \\
&  
\int_{{\mathbb D}_R\setminus {\mathbb D}_{R/2}} |\nabla\hat\Psi_R|^2 =
  \int_{{\mathbb D}_R\setminus {\mathbb D}_{R/2}} |\nabla\Psi|^2 + o(1), \\
& \int_{{\mathbb D}_R\setminus {\mathbb D}_{R/2}} (|\hat\Psi_R|^2-1)^2 = 
\int_{{\mathbb D}_R\setminus {\mathbb D}_{R/2}} (|\Psi|^2-1)^2 +o(1), \quad 
\int_{{\mathbb D}_R\setminus {\mathbb D}_{R/2}} \hat S_R^2 = \int_{{\mathbb D}_R} S^2 +o(1),
\end{align*}
as $R\to \infty$, where $\hat S_R=\frac12 (|\psi_{R,-}|^2 - |\psi_{R,+}|^2)$.
In particular, 
$$   E(\hat\Psi_R; {\mathbb D}_R\setminus {\mathbb D}_{R/2}) = 
E(\Psi; {\mathbb D}_R\setminus {\mathbb D}_{R/2}) + o(1)  $$
as $R\to\infty$.
\end{lem}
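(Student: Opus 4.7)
The plan is to mirror the argument from Lemma~\ref{patch}, but with the cutoff applied in the opposite direction so as to interpolate from $\Psi$ on $\partial{\mathbb D}_R$ down to the symmetric reference map on $\partial{\mathbb D}_{R/2}$. As in the proof of Lemma~\ref{patch}, I would start by fixing $R_0$ large enough so that on $|x|\ge R_0$ one has the decomposition $\psi_\pm=\rho_\pm\exp[i\alpha_\pm]$, $\alpha_\pm = n_\pm\theta+\chi_\pm$, with $\chi_\pm\to\phi_\pm$ uniformly at infinity. Replacing $\chi_\pm$ by $\chi_\pm-\phi_\pm$ if necessary, I may assume $\phi_\pm=0$, and the decay estimates \eqref{est1}--\eqref{est3} from \cite{ABM} are available.

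Next, I would define on the annulus ${\mathbb D}_R\setminus {\mathbb D}_{R/2}$
\begin{equation*}
\hat\rho_\pm(x)=\rstwo (1-L_R(r))+L_R(r)\rho_\pm(x), \qquad
\hat\alpha_\pm(x)=n_\pm\theta+L_R(r)\chi_\pm(x),
\end{equation*}
with $L_R$ as in the proof of Lemma~\ref{patch}, and set $\hat\psi_{R,\pm}=\hat\rho_\pm\exp[i\hat\alpha_\pm]$. By construction $\hat\Psi_R=\Psi$ on $|x|=R$, while $\hat\Psi_R=\rstwo(e^{in_+\theta},e^{in_-\theta})$ on $|x|=R/2$, as required.

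The bulk of the work is then the estimates on the annulus $A=\{R/2<|x|<R\}$. These are entirely analogous to those in Lemma~\ref{patch}: expanding $|\nabla\hat\rho_\pm|^2-|\nabla\rho_\pm|^2$, the integrand is controlled by a combination of $(\rho_\pm-\rstwo)^2/r^2$, $|\nabla\rho_\pm|^2$ and cross terms, all of which integrate to $o(1)$ by \eqref{est1}--\eqref{est3}. The phase gradient difference $|\nabla\hat\alpha_\pm|^2-|\nabla\alpha_\pm|^2$ produces the same three types of contributions, $|\nabla\chi_\pm|^2$, $r^{-1}|\nabla\chi_\pm|$ (handled by Cauchy--Schwarz against $\int_{|x|\ge R/2}|\nabla\chi_\pm|^2$), and $r^{-2}\chi_\pm^2$ (handled by $\sup_{|x|\ge R/2}\chi_\pm^2\to 0$). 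The coupled expansion exactly as in \eqref{exp}, with $\tilde\rho_\pm,\tilde\alpha_\pm$ replaced by $\hat\rho_\pm,\hat\alpha_\pm$, then gives
\begin{equation*}
\int_A \bigl|\hat\rho_\pm^2|\nabla\hat\alpha_\pm|^2-\rho_\pm^2|\nabla\alpha_\pm|^2\bigr|\to 0.
\end{equation*}
Finally, the pointwise bound $|\hat\rho_\pm^2-\rho_\pm^2|=O(r^{-2})$ together with $(|\Psi|^2-1)^2+4S^2=O(r^{-4})$ and the analogous bound for $\hat\Psi_R$ gives $\int_A|(|\Psi|^2-1)^2-(|\hat\Psi_R|^2-1)^2|+|S^2-\hat S_R^2|\to 0$. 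Summing the three pieces yields $E(\hat\Psi_R;A)=E(\Psi;A)+o(1)$, which is the required conclusion.

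I do not expect any serious obstacle here, since each ingredient is a direct transcription of the estimate used in the proof of Lemma~\ref{patch}; the only conceptual point is to verify that reversing the role of $L_R$ (interpolating toward the symmetric profile at the inner circle rather than the outer one) does not spoil any of the bounds, and indeed it does not, because the decay estimates \eqref{est1}--\eqref{est3} hold uniformly on $|x|\ge R_0$ and all error terms are controlled by the same integrals $\int_{|x|\ge R/2}|\nabla\chi_\pm|^2$ and $\sup_{|x|\ge R/2}\chi_\pm^2$ that drive the original proof.
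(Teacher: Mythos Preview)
Your proposal is correct and follows exactly the approach indicated in the paper: the paper itself does not give a separate proof of Lemma~\ref{patchtoo} but simply observes that the same procedure as in Lemma~\ref{patch}, with the reversed interpolation $\hat\rho_\pm=\rstwo(1-L_R)+L_R\rho_\pm$, $\hat\alpha_\pm=n_\pm\theta+L_R\chi_\pm$, yields the result. Your write-up spells out precisely this, and your observation that the decay estimates \eqref{est1}--\eqref{est3} on $|x|\ge R_0$ are insensitive to which endpoint of the annulus receives the symmetric data is exactly the point.
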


\begin{proof}[Proof of Proposition~\ref{n01}]
The proof closely follows that of Theorem 2 in \cite{Sh}.  Let $\Psi$ be a local minimizer.  If either $|n_+|\ge 2$ or $|n_-|\ge 2$, we must have $n_+^2 + n_-^2 > |n_+| + |n_-|$, and hence Lemma~\ref{coreenergy} implies that for all $R$ sufficiently large,
\be\label{contradict} 
 \lim_{R\to\infty}{E(\Psi; {\mathbb D}_R)\over \ln R} 
 = {\pi\over 2}(n_+^2 + n_-^2)
 > {\pi\over 2}(|n_+| + |n_-|). 
\ee

By Lemma~\ref{patchtoo}, for $R$ large we obtain $\hat\Psi_R$ with constants $\phi_\pm$, defined in ${\mathbb D}_R\setminus {\mathbb D}_{R/2}$.
Denote by
$$  G_*(U;\Omega):= G_{\eps=1}(U;\Omega)=
\int_\Omega\left\{  \frac12 |\nabla U|^2 + \frac14 (|U|^2-1)^2
\right\},  $$
the Ginzburg--Landau energy for $U\in H^1_{loc}(\Omega;\mathbb C)$.
Taking $\Omega= {\mathbb D}_{R/2}$, let $U_R^\pm$ minimize the Ginzburg--Landau energy $G_*$
 with boundary condition $U_R^\pm|_{\partial {\mathbb D}_{R/2}}=e^{i [n_\pm\theta + \phi_\pm]}$.  
By the results of Brezis, Bethuel, \& H\'elein \cite{BBH2},
$$  G_*(U_R^\pm; {\mathbb D}_{R/2}) = \pi |n_\pm|  \ln (R/2) + O(1).  $$
Now let
$$  \Phi_R(x) = \begin{cases}
         \left(\rstwo U_R^+, \rstwo U_R^-\right), &\text{in ${\mathbb D}_{R/2}$}\\
         \hat\Psi_R(x), &\text{in ${\mathbb D}_R\setminus {\mathbb D}_{R/2}$}
         \end{cases}.
$$
Since $\Psi$ is a local minimizer, we have
\begin{align*}
E(\Psi; {\mathbb D}_R) &\le E(\Phi_R; {\mathbb D}_R) \\
 &= \frac12\left[ G_*(U_R^+;{\mathbb D}_{R/2}) +  G_*(U_R^-;{\mathbb D}_{R/2})\right] 
    + E(\hat\Psi_R; {\mathbb D}_R\setminus {\mathbb D}_{R/2})  \\
  & = \frac{\pi}2\left( |n_+| + |n_-|\right) \ln (R/2)  
    + E(\Psi; {\mathbb D}_R\setminus {\mathbb D}_{R/2}) + O(1).          
\end{align*}
From Lemma~\ref{coreenergy}, it follows that
$$  E(\Psi; {\mathbb D}_R\setminus {\mathbb D}_{R/2}) =\frac\pi 2(n_+^2+n_-^2)\ln 2 + o(1),  $$
as $R\to \infty$, and hence
$$  E(\Psi; {\mathbb D}_R) \le {\pi\over 2}(|n_+| + |n_-|)\ln R + O(1).  $$
This contradicts \eqref{contradict}.
\end{proof}

\medskip

\begin{proof}[Proof of Proposition~\ref{prop1}]
Let $\Psi$ be an entire solution to (\ref{eqns}) satisfying (\ref{BMRcond})
with degrees $n_\pm\in \{0,1\}$ at infinity.  (The degree -1 case may be obtained by complex conjugation.)  Multiplying each component $\psi_\pm$ by a complex constant of modulus one if necessary, we may assume that
$\psi_\pm\to\rstwo e^{in_\pm\theta}$ as $|x|\to \infty$ (that is, $\phi_\pm=0$.)  For  large $R$, let $\tilde \Psi_R$ be as in
Lemma~\ref{patch}, so that
$\tilde\Psi_R|_{\partial {\mathbb D}_R}=\rstwo (e^{in_+\theta},e^{in_-\theta})$.
Note that by scaling,
\be\label{scaling}  J_{[n_+,n_-]}(1,R;\beta)= 
\min\left\{E(\Psi; {\mathbb D}_R): \ \Psi|_{\partial {\mathbb D}_R}
        =\rstwo (e^{in_+\theta},e^{in_-\theta})\right\} 
          = I_{[n_+,n_-]}\left({1\over R};\beta\right).
\ee
Using \eqref{IGL} we conclude that
\begin{align*}
E(\Psi; {\mathbb D}_R) &= E(\tilde \Psi_R; {\mathbb D}_R) - o(1) \\
&\ge  J_{[n_+,n_-]}(1,R;\beta) - o(1)
  = {\pi\over 2}(n_++n_-)\ln R + Q_{[n_+,n_-]} - o(1),
\end{align*}
which proves the first assertion in Proposition~\ref{prop1}.

\smallskip

Now assume in addition that $\Psi$ is a local minimizer.  We now employ Lemma~\ref{patchtoo}, with 
\begin{equation}\label{psihat}
\left.
\begin{gathered}
\hat\Psi_R=\left(\hat\psi_{R,+},\hat\psi_{R,-}\right), \qquad
\hat\psi_{R,\pm}=\hat\rho_\pm(x)\exp[i\hat\alpha_\pm], \\
  \hat\rho_\pm(x)= \rstwo (1-L_R(r)) + L_R(r)\rho_\pm(x), \qquad
   \hat\alpha_\pm(x)= n_\pm\theta + L(r)\chi_\pm (x)
   \end{gathered}
 \right\}   ,
\end{equation}
so that
$$   \hat\Psi_R|_{\partial {\mathbb D}_{R/2}}= \rstwo(e^{in_+\theta}, e^{in_-\theta}), \qquad
      \hat\Psi_R|_{\partial {\mathbb D}_{R}}=\Psi |_{\partial {\mathbb D}_{R}},  
$$
and for all $R>R_0$,
$$  E(\hat\Psi_R; {\mathbb D}_{R}\setminus {\mathbb D}_{R/2}) = 
E(\Psi; {\mathbb D}_{R}\setminus {\mathbb D}_{R/2}) + o(1) = 
{\pi\over 2}(n_++n_-)\ln 2 + o(1).
$$
Next, we let $V_R(x)$ be a minimizer for the problem $I_{[n_+,n_-]}(1,R/2;\beta)$ with symmetric data.  By scaling,
$$  E(V_R; {\mathbb D}_{R/2}) = J_{[n_+,n_-]}(1,R/2;\beta) 
= I_{[n_+,n_-]}(2/R;\beta)= {\pi\over 2}(n_+ +n_-)\ln \left({R\over 2}\right) + Q_{[n_+,n_-]} + o(1).  $$

We now define a trial function in ${\mathbb D}_{R}$,
$$  \Phi_R(x)= \begin{cases}
   \hat\Psi_R(x), &\text{ if  $R/2\le |x|\le R$} \\
   V_R(x), & \text{if $|x|\le R/2$}\end{cases}.
$$
Since $\Phi_R=\Psi$ on $\partial {\mathbb D}_{R}$, and $\Psi$ is a local minimizer, we have:
\begin{align*}
E(\Psi; {\mathbb D}_{R}) &\le E(\Phi_R; {\mathbb D}_{R})  \\
&= E(V_R; {\mathbb D}_{R/2}) + E(\hat\Psi_R; {\mathbb D}_R\setminus {\mathbb D}_{R/2}) \\
&= {\pi\over 2}(n_++n_-)\ln R + Q_{[n_+,n_-]} + o(1).
\end{align*}
\end{proof}

\section{Locally minimizing solutions}\label{sec4}

\setcounter{thm}{0}

We are ready to prove Theorem~\ref{thm2} stated in the Introduction, concerning locally minimizing solutions of the system \eqref{eqns}.

\begin{proof}[Proof of Theorem~\ref{thm2}]
First, if $\Psi^*$ has the form 
$$ \Psi^*={1\over\sqrt{2}}
\left( u(x-a)e^{i\phi_+}, u(x-a)e^{i\phi_-}\right), 
$$
 then it is a local minimizer.  By Theorem~\ref{thm1}, the minimizer for the boundary-value problem $\Ill(\eps;\beta)$ coincides with ($\sqrt{2}$ times) the Ginzburg-Landau minimizer in each component.  By blow-up around the vortex we converge to a solution to the system in $\RR^2$ which is a local minimizer.  Since each component of the minimizer in $\ddd$ is a solution of the Ginzburg--Landau equation, the solution we obtain is of the form above, and hence such solutions are local  minimizers.

\smallskip

Now, assume $\Psi^*$ is any local  minimizer with degree $\deg(\psi^*_\pm, \infty)=1$.
We claim that its spin $S_*=\frac12(|\psi^*_-|^2-|\psi^*_+|^2)=0$.  Suppose not, and let
$$  \sigma = \int_{\RR^2} S_*^2 \, dx > 0.  $$
Choose $R_0$ large enough so that $\int_{{\mathbb D}_{R_0}} S_*^2 \, dx \ge \sigma/2$.

We use Lemma~\ref{patch} with $n_\pm=1$
to bound the energies $E(\Psi^*; {\mathbb D}_R)$ from below.  
Let $\tilde \Psi_R=[\tilde\psi_{R,+},\tilde\psi_{R,-}]$ be as in Lemma~\ref{patch}, 
$u_{R,\pm}=\stwo\tilde\psi_{R,\pm}$, and apply the identity (\ref{identity}):
\bea\nnn
E(\Psi^*; {\mathbb D}_R) &= E(\tilde\Psi_R; {\mathbb D}_R) + o(1) \\
\nnn
  &= \frac12\left(  G_*(u_{R,+}; {\mathbb D}_R) + G_*(u_{R,-}; {\mathbb D}_R)\right) +
       (\beta-1)\int_{{\mathbb D}_R} \tilde S_R^2 +o(1) \\
     \nnn
  &\ge I_{GL}\left({1\over R}\right) + (\beta-1) {\sigma\over 2} + o(1) \\
    \label{upper}
  &= \pi\ln R + Q_{GL} + (\beta-1) {\sigma\over 2} + o(1),
\end{align}
since $u_{R,\pm}=e^{i\theta}$ on $\partial {\mathbb D}_R$, and by scaling we have
$$   J_{GL}(1,R):=\min\{ G_*(u; {\mathbb D}_R): \ u\in H_{e^{i\theta}}^1({\mathbb D}_R)\} = I_{GL}(1/R).  $$

Now let $\hat\Psi_R$ be as in Lemma~\ref{patchtoo}, and define
$$  \Phi_R(x) = \begin{cases}
      \hat\Psi_R(x), & \text{if $R/2\le |x|\le R$}, \\
      \rstwo\left( u_{R/2}e^{i\phi_+}, u_{R/2}e^{i\phi_-}\right), 
        & \text{if $|x|<R/2$},\end{cases}
$$
where $u_{R/2}$ is the minimizer of the Ginzburg--Landau functional in ${\mathbb D}_{R/2}$ with symmetric data, 
$$  G_*(u_{R/2}; {\mathbb D}_{R/2}) = \min\{ G_* (v;{\mathbb D}_{R/2}): \
     v\in H^1_{e^{i\theta}}({\mathbb D}_{R/2})\} = I_{GL}(2/R).  
$$
By the estimates of Lemma~\ref{patchtoo},
\begin{align*}
E(\Phi_R; {\mathbb D}_R) & =   E(\hat\Psi_R; {\mathbb D}_R\setminus {\mathbb D}_{R/2}) + E(\Phi_R; {\mathbb D}_{R/2})  
    \\
  &=  E(\Psi^*; {\mathbb D}_R\setminus {\mathbb D}_{R/2}) + I_{GL}(2/R) + o(1) \\
  &=  \left(  \pi\ln\left({R\over R/2}\right) + o(1)\right)
             + \left(\pi\ln (R/2) + Q_{GL}\right) +o(1) \\
    &= \pi\ln R + Q_{GL} + o(1),
\end{align*}
as $R\to\infty$.  Hence, comparing with (\ref{upper}) we have
$$  E(\Phi_R; {\mathbb D}_R) \le E(\Psi^*; {\mathbb D}_R) - (\beta-1){\sigma\over 2} + o(1),  $$
and for $\sigma>0$ we have that $\Psi^*$ cannot be a local minimizer for $\beta>1$.  Therefore the claim is established, $S^*\equiv 0$ for any local minimizer.

We then conclude that for any local minimizer, $|\psi_-|=|\psi_+|$.  So $\psi_\pm$ have their zeros coincident, and each solves
$$   -\Delta \psi_\pm = (1-2|\psi_\pm|^2)\psi_\pm  $$
in $\RR^2$ with degree one at infinity.  By Mironescu \cite{M2} each is ($\stwo$ times) an equivariant entire solution of the Ginzburg--Landau equation, and therefore $\Psi$ must have the form (\ref{GLsol}).
This completes the proof of (i) of Theorem~\ref{thm2}.

\bigskip

Next we turn to the case $0<\beta<1$.  We first show that if $\Psi$ is a local minimizing entire solution in $\RR^2$ with $n_\pm=1$, then
$$    \int_{\RR^2} S^2\, dx \ge {\pi\over 4}, $$
with $S=\frac12(|\psi_-|^2-|\psi_+|^2)$.
Let
$$  \sigma_R:= \int_{{\mathbb D}_R} S^2\, dx, \qquad \sigma=\int_{\RR^2} S^2\, dx.  $$
First, we bound $E(\Psi; {\mathbb D}_R)$ from below.  Apply Lemma~\ref{patch} to obtain for each $R>0$ a corresponding $\tilde \Psi_R$, and
set $\tilde u_\pm=\tilde\psi_{R,\pm}$.  By Lemma~\ref{patch} and (\ref{identity}),
\begin{align}\nnn
E(\Psi; {\mathbb D}_R) &= E(\tilde \Psi_R; {\mathbb D}_R) - o(1) \\
\nnn
&= \frac12(  G_*(\tilde u_+; {\mathbb D}_R) + G_*(\tilde u_-; {\mathbb D}_R) ) - (1-\beta) \int_{{\mathbb D}_R} \tilde S^2 - o(1) \\
\nnn
&\ge  I_{GL}(1/R) -(1-\beta) \sigma_R - o(1)  \\
\label{lowerbd}
&= \pi\ln R + Q_{GL} - (1-\beta)\sigma - o(1).
\end{align}

For the upper bound we use the local minimality of $\Psi$.  First, as in \cite{BBH2} (see \cite{AB1}) for any pair of points $a_1,a_2\in {\mathbb D}_1$, $\rho>0$ fixed, and all small $\eps>0$
 there exists $\hat\Phi$ with $\hat\Phi|_{\partial {\mathbb D}_1}=\rstwo[e^{i\theta},e^{i\theta}]$
 so that
\begin{align*}  E_\eps(\hat\Phi_\eps)&= \pi\ln\left({1\over \rho}\right)
    + W(a_1,a_2; [1,0], [0,1]) + \Ilo(\eps;\rho) + \Iol(\eps;\rho) + O(\rho)\\
     &= \pi\ln\left({1\over \rho}\right)
    -{\pi\over 2}\left[\ln(1-|a_1|^2) + \ln(1-|a_2|^2)\right] \\
&\qquad
     + J_{[1,0]}(\eps,\rho;\beta) + J_{[0,1]}(\eps,\rho;\beta) + O(\rho),
\end{align*}
where $W$ is the renormalized energy associated to the Dirichlet problem (see \cite{AB1}.)  For any $\eta>0$, fix $a_1=-a_2\neq 0$ close enough to the origin 
and $\rho>0$ such that
$$
E_\eps(\hat\Phi_\eps)\le \pi\ln\left({1\over \rho}\right) + 
  J_{[1,0]}(\eps,\rho;\beta) + J_{[0,1]}(\eps,\rho;\beta) +\eta.
$$
Using scaling and Lemma~\ref{lem3},
\begin{align}\nnn
E_\eps(\hat\Phi_\eps)&\le \pi\ln\left({1\over \rho}\right) + J_{[1,0]}(\eps,\rho;\beta) + J_{[0,1]}(\eps,\rho;\beta) +\eta  \\  \nnn
&= \pi\ln\left({1\over \rho}\right) + 2\left[{\pi\over 2}\ln\left({\rho\over\eps}\right)
   + \Qlo \right] + \eta + o(1) \\
   \label{u1}
&\le   \pi|\ln\eps| + Q_{GL} - (1-\beta){\pi\over 4} + \eta + o(1),
\end{align}
where $o(1)$ is with respect to $\eps\to 0$.

We now construct a function $\Phi_R$ with the same boundary values as $\Psi$ on $\partial {\mathbb D}_R$.  Applying Lemma~\ref{patchtoo}, we set $\Phi_R=\hat\Psi_R$ in 
${\mathbb D}_R\setminus {\mathbb D}_{R/2}$, with 
$\Phi_R|_{\partial {\mathbb D}_{R/2}}=\rstwo(e^{i\theta+\phi_+}, e^{i\theta+\phi_-})$, 
$\phi_\pm$ constants.  Inside ${\mathbb D}_{R/2}$ we define $\eps=2/R$ and 
$\Phi_R=\hat\Phi_{2/R} (2x/R)$, with $\Phi_\eps$ as constructed above.
By rescaling and Lemma~\ref{patchtoo} we have (as $R\to\infty$),
\begin{align*}
  E(\Phi_R; {\mathbb D}_R) &= E(\hat\Psi_R; {\mathbb D}_R\setminus {\mathbb D}_{R/2}) 
       + E(\Phi_R; {\mathbb D}_{R/2}) + o(1) \\
     &= \pi\ln\left({R\over R/2}\right) + E_{{2\over R}}(\hat\Phi_{2/R}) + o(1) \\
     &\le \pi\ln R +
       Q_{GL} - (1-\beta){\pi\over 4} + \eta + o(1).
\end{align*}
Since $\Psi$ is a local minimizer, we obtain the upper bound,
\be\label{ub}
E(\Psi; {\mathbb D}_R) \le \pi\ln R +
       Q_{GL} - (1-\beta){\pi\over 4} + \eta + o(1),
\ee
for any $\eta>0$, in the limit $R\to\infty$.

Putting together (\ref{lowerbd}) and (\ref{ub}), since $\beta<1$ we have
$$  \sigma \ge {\pi\over 4} -{\eta\over 1-\beta}.  $$
Since $\eta>0$ is arbitrary we obtain the estimate \eqref{S2} in Theorem~\ref{thm2}.  As a corollary, we observe that the equivariant, Ginzburg--Landau-like solutions are {\em not} local minimizers for $0<\beta<1$.

To complete the proof of Theorem~\ref{thm2} we require the following uniqueness result for entire solutions with a common zero in each component:

\begin{prop}\label{unique}
Suppose $\Psi$ is an entire solution of \eqref{eqns}, satisfying \eqref{BMRcond}, with $[n_+,n_-]=[1,1]$.  If in addition $\psi_+(x_0)=\psi_-(x_0)=0$ for some $x_0\in\RR^2$, then there exist constants $\phi_\pm\in\RR$ with
\begin{equation}\label{GLlike}  \psi_+(x) = {1\over\sqrt{2}} u(x-x_0) e^{i\phi_+},\qquad
 \psi_- (x) = {1\over\sqrt{2}} u(x-x_0) e^{i\phi_-},  
 \end{equation}
where $u=f(r) e^{i\theta}$ is the equivariant, degree one entire solution to the Ginzburg--Landau equation in $\RR^2$.
\end{prop}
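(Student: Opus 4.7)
The strategy is to prove that $|\psi_+|\equiv|\psi_-|$ on $\RR^2$, which forces the spin term in \eqref{eqns} to vanish and decouples the system into the scalar Ginzburg--Landau equation $-\Delta\psi=(1-2|\psi|^2)\psi$ for each component. Combined with the uniqueness of entire Ginzburg--Landau solutions with degree one at infinity vanishing at a prescribed point (Brezis--Merle--Rivi\`ere \cite{BMR} and \cite{M2}), this will give the explicit form \eqref{GLlike}.

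To establish $|\psi_+|\equiv|\psi_-|$, I would translate so $x_0=0$ and introduce
\[ H(x) := \psi_+(x)e^{-i\phi_+} - \psi_-(x)e^{-i\phi_-}, \qquad K(x) := \psi_+(x)e^{-i\phi_+} + \psi_-(x)e^{-i\phi_-}, \]
where $\phi_\pm$ are the asymptotic phases from the decomposition \eqref{decomp}. Then $H(0)=0$ (by the common zero) and $H(x)\to 0$ as $|x|\to\infty$ (by matching of asymptotic phases), with gradient decay controlled by \eqref{est1}--\eqref{est3}. A direct computation from \eqref{eqns} gives
\[ -\Delta H = (1-|\Psi|^2)H + \beta(|\psi_-|^2-|\psi_+|^2)K, \]
while a short calculation shows $\lan H,K\ran = |\psi_+|^2-|\psi_-|^2$. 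Testing against $\overline H$, integrating over $\RR^2$, and using \eqref{BMRcond} together with the decay of $H$ to dispose of the boundary terms, one obtains the integral identity
\[ \int_{\RR^2}|\nabla H|^2\,dx + \beta\int_{\RR^2}(|\psi_-|^2-|\psi_+|^2)^2\,dx = \int_{\RR^2}(1-|\Psi|^2)|H|^2\,dx. \]

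The decisive step is to pass from this identity to the pointwise rigidity $H\equiv 0$. Both sides are non-negative, so the identity by itself is not sufficient, and it must be supplemented with sharper input. I would combine it with the weighted pointwise estimate $1-|\Psi(x)|^2=O(|x|^{-2})$ from \eqref{est1} and a Hardy-type inequality exploiting that $H$ vanishes both at the common zero and at infinity, so as to absorb the right-hand side into $\int|\nabla H|^2$. As a complementary route, one may derive a second identity by testing the equation for $H$ against $\overline K$ (or against the generator of a $U(1)$-symmetry of the system); the pair of identities should then preclude any nontrivial $H$.

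Once $H\equiv 0$ is known, set $\psi:=\psi_+e^{-i\phi_+}=\psi_-e^{-i\phi_-}$. Then $|\psi_+|=|\psi_-|$, the $\beta$-term in \eqref{eqns} disappears, and both components solve $-\Delta\psi = (1-2|\psi|^2)\psi$. Putting $u:=\stwo\,\psi$, the function $u$ solves $-\Delta u = (1-|u|^2)u$ on $\RR^2$, vanishes at the origin, has degree one at infinity, and satisfies the Brezis--Merle--Rivi\`ere condition; it must therefore coincide, up to a unimodular constant absorbed into $\phi_\pm$, with the equivariant profile $f(r)e^{i\theta}$, and translating back gives \eqref{GLlike}. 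The main obstacle is extracting the pointwise rigidity $H\equiv 0$ from the integral identity, since the natural bounds leave both sides non-negative; closing this step requires either Hardy-type weighted estimates combined with the $|x|^{-2}$ decay of $1-|\Psi|^2$ at infinity, or the construction of a second independent identity.
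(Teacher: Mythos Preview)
Your integral identity
\[
\int_{\RR^2}|\nabla H|^2 + \beta\int_{\RR^2}(|\psi_-|^2-|\psi_+|^2)^2 = \int_{\RR^2}(1-|\Psi|^2)|H|^2
\]
is correct, but the step you flag as ``the main obstacle'' is a genuine gap, and the remedies you suggest do not close it. In two dimensions the Hardy constant for the weight $|x|^{-2}$ is zero, so the decay $1-|\Psi|^2=O(|x|^{-2})$ cannot be absorbed into $\int|\nabla H|^2$ at infinity; near the common zero the situation is worse, since $1-|\Psi|^2$ is close to $1$ there. The ``second identity'' obtained by testing against $\overline K$ gives only
\[
\int_{\RR^2}|\nabla H|^2+\int_{\RR^2}|\nabla K|^2
 = 2\int_{\RR^2}(1-|\Psi|^2)|\Psi|^2,
\]
which is just the energy identity for $\Psi$ and adds no new information. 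So as written, the argument does not force $H\equiv 0$.

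The paper takes a different route, following Mironescu \cite{M2}: one divides by the known radial profile, setting $w_\pm=\psi_\pm/\rho$ with $\rho=\rstwo f$, derives the system \eqref{quo} for $w_\pm$, and applies a Pohozaev-type identity (multiply by $x\cdot\nabla w_\pm$ and integrate over annuli). This yields $F(R)-F(R_0)=G(R,R_0)$ where the integrand of $G$ is \emph{pointwise nonnegative} (it involves $r\rho'/\rho\,|\partial_r w_\pm|^2$ and the potential terms). The asymptotics \eqref{est1}--\eqref{est3} and the vanishing of $\psi_\pm$ at $0$ force $F(R)-F(R_0)\to 0$ as $R\to\infty$, $R_0\to 0$, hence $G(\infty,0)=0$, and then each nonnegative term vanishes: $|w_\pm|\equiv 1$ and $\partial_r w_\pm\equiv 0$. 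The conclusion $\psi_\pm=\rho\,e^{i(\theta+\phi_\pm)}$ follows directly, without any appeal to an external uniqueness theorem. The key advantage of this approach over yours is that the division by $\rho$ turns the indefinite term $(1-|\Psi|^2)|H|^2$ into a manifestly signed quantity in the Pohozaev identity, which is exactly what your energy identity lacks.
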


\begin{rem} \rm
The proof of Proposition~\ref{unique} is based on the method of Mironescu \cite{M2}, and is deferred to the end of this section.  The existence or non-existence of a locally minimizing entire solution for \eqref{eqns} with degrees $n_+=1=n_-$ is an interesting open question.  If such a local minimizer were to exist, Proposition~\ref{unique} implies that its vortices must be spatially separated.
\end{rem}

We may now complete the proof of Theorem~\ref{thm2}.  Since $\sigma\ge {\pi\over 4}$, the local minimizer $\Psi$ is not of the form \eqref{GLlike}, and hence by Proposition~\ref{unique},  we conclude that $|\Psi(x)|^2\neq 0$ on $\RR^2$.  By \eqref{est1} there exists $R>0$ so that $|\Psi(x)|^2 \ge \frac14$ for $|x|>R$, while on the compact set ${\mathbb D}_R$,  $|\Psi(x)|^2$ is continuous and non-vanishing, thus bounded away from zero.  This completes the proof of Theorem~\ref{thm2}.
\end{proof}

\bigskip

\begin{rem}\label{betaone} \rm When $\beta=1$ the equations decouple completely, and each component satisfies the Ginzburg--Landau equation in $\RR^2$.  In this special case there is a huge degeneracy, and the solution space with degree $[n+_,n_-]=[1,1]$ is completely described by:
$$  U(x)= U_{a,b,\phi_+\phi_-} = \rstwo\left( u(x-a)e^{i\phi_+}, u(x-b)e^{i\phi_-}\right),  $$
where $u=f(r)e^{i\theta}$ is the unique equivariant solution with degree one, $a,b\in \RR^2$ are arbitrary points in the plane, and $\phi_\pm$ are arbitrary constants.  Since the equivariant solution is a local minimizer for Ginzburg--Landau and because the energy decouples when $\beta=1$, 
$$ E(U;{\mathbb D}_R)=\frac12(G_*(u(x-a); {\mathbb D}_R) + G_*(u(x-b);{\mathbb D}_R)),  $$
it is clear that {\it each} of these solutions is a local minimizer.  The question then arises whether there exist local minimizers with $\beta<1$ which bifurcate from this degenerate family as $\beta\to 1$.  Alas, if we calculate the $L^2$ norm of the spin $S_{a,b,\phi_+,\phi_-}$ associated
to $U_{a,b,\phi_+,\phi_-}$, we have
$$  \int_{\RR^2} S^2_{a,b,\phi_+,\phi_-}\, dx < {\pi\over 4},  $$
and the value approaches $\pi/4$ as $|a-b|\to\infty$.  In particular, by Theorem~\ref{thm2}, no family $\Psi_\beta$ of local minimizers for $\beta\to 1^-$
can converge to a member of the family $U_{a,b,\phi_+,\phi_-}$, and so there is no bifurcation.
\end{rem}

\bigskip

Next, we complete the proof of Theorem~\ref{thm4} stated in the Introduction:

\begin{proof}[Proof of Theorem~\ref{thm4}]
Let $\Psi_\eps\in H^1_g(\Omega;\mathbb C^2)$ be minimizers of $E_{\eps_n,\beta}$ with $0<\beta<1$.  We argue by contradiction, and assume that there is no such constant $c>0$ for which $|\Psi|\ge c$ in $\Omega$.  In that case, there exists a sequence $\eps_n\to 0$ and points $p_n\in \Omega$ for which $\Psi_{\eps_n}(p_n)\to 0$.  By the methods of \cite{BBH2} extended to the energy $E_\eps$ (see \cite{AB2},) $|\Psi_n|$ is bounded away from zero in a neighborhood of the boundary $\partial\Omega$, so  in particular, we can assume dist$(p_n,\partial\Omega)\gg\eps_n$ for $\eps_n$ sufficiently small.  Blowing up around $p_n$ at scale $\eps_n$, by standard estimates a subsequence of the rescaled minimizers converge in $C^k_{loc}$ to a locally minimizing solution $\tilde\Psi$ to \eqref{eqns}, satisfying \eqref{BMRcond}, with $\Psi(0)=0$.  By Proposition~\ref{n01}, the degree pair $\deg(\tilde\Psi,\infty)=[n_+,n_-]$ has one of the forms $[0,0]$, $[\pm 1,0]$, $[0,\pm 1]$, $[\pm 1,\pm 1]$.  We claim that each is impossible.  Indeed, from \eqref{BMRcond}, the only $[0,0]$ solution is constant (of modulus $1/\sqrt{2}$) in each component.  In \cite{ABM} it is proven that the $[0,\pm 1]$ and $[\pm 1, 0]$ locally minimizing solutions never vanish for any $\beta>0$.  And Theorem~\ref{thm2} (ii) asserts the same conclusion for the $[\pm 1,\pm 1]$ local minimizer when $0<\beta<1$.  In conclusion, there must exist $c>0$ for which $|\Psi_\eps|\ge c>0$ as claimed.
\end{proof}

%A similar argument leads to the following improvement of Theorem~\ref{thm4}
%\begin{thm}
%Let $0<\beta<1$, and suppose $\Psi_\eps$ minimizes $E_\eps(\Psi;\Omega)$ with Dirichlet boundary condition 
%$\Psi_\eps|_{\partial\Omega}=g.$  Then there is some $c>0$ such that, for all $\eps>0$ sufficiently small, $|\Psi_\eps|\ge c$ in $\Omega$.
%\end{thm}
%

\bigskip

We conclude the section with the deferred proof of Proposition~\ref{unique}:

\begin{proof}[Proof of Proposition~\ref{unique}]
We follow the uniqueness proof of Mironescu \cite{M2}.  Denote by
$\rho={1\over\sqrt{2}} f$.  Without loss of generality, assume $x_0=0$, and form the quotients
$$   w_\pm(x) = {\psi_\pm(x)\over \rho(|x|)} \in C^\infty(\RR^2\setminus\{0\}).  $$
We may then derive the system of equations satisfied by $w_\pm$, 
\begin{equation}\label{quo}
\left. \begin{gathered}
-\Delta w_+ - 2{\rho'\over \rho} {\partial w_+\over\partial r}
 - {1\over r^2} w_+ = \rho^2 \left[
           2 - |w_+|^2 - |w_-|^2 + \beta(|w_-|^2-|w_+|^2)\right] w_+ \\
  -\Delta w_- - 2{\rho'\over \rho} {\partial w_-\over\partial r}
 - {1\over r^2} w_- = \rho^2 \left[
           2 - |w_+|^2 - |w_-|^2 - \beta(|w_-|^2-|w_+|^2)\right] w_+ .        
\end{gathered} \right\}
\end{equation}
As in the derivation of the Pohozaev identity, we multiply each equation by $x\cdot\nabla w_\pm$, and integrate over the domain ${\mathbb D}_R\setminus {\mathbb D}_{R_0}$.  The resulting identity has the form
$$   F(R)- F(R_0)= G(R,R_0), $$
with
\begin{multline*}  F(R) = \int_{\partial {\mathbb D}_R}
    {R\rho^2\over 4} \left[ (2-|w_+|^2 - |w_-|^2)^2
           + \beta (|w_-|^2-|w_+|^2)^2\right]
           \\ - \sum_\pm \int_{\partial {\mathbb D}_R} \left[
           {1\over 2R} |w_\pm|^2 
            + {R\over 2} \left[
            |\partial_r w_\pm|^2 - |\partial_\tau w_\pm|^2\right] \right].
\end{multline*}
and
\begin{multline*}
  G(R,R_0) = \int_{{\mathbb D}_R\setminus {\mathbb D}_{R_0}}
        \left[ {2r\rho' \over \rho} \left(
            |\partial_r w_+|^2 + |\partial_r w_-|^2 \right)\right.  \\
            \left.
            + {2\rho^2 + 2r\rho\rho'\over 4}
              \left[ (2-|w_+|^2 - |w_-|^2)^2
           + \beta (|w_-|^2-|w_+|^2)^2\right] \right].
\end{multline*}
Using the estimates \eqref{est1}--\eqref{est3} on the solutions $\psi_\pm$ for large $R$, the corresponding facts for the Ginzburg--Landau profile $\rho$, we obtain 
$F(R)-F(R_0)\to 0$ as both $R\to\infty$ and $R_0\to 0$, and thus
conclude that $G(\infty,0)=0$.  Since $\rho>0$ and $\rho'>0$, we
may conclude that $|w_+|=|w_-|=1$ and $\partial_r w_\pm=0$ in 
all of $\RR^2$, and hence $w_\pm=w_\pm(\theta)=e^{i\chi_\pm(\theta)}$, that is  $\psi_\pm = \rho w_\pm = \rho e^{i\chi_\pm(\theta)}$.
  By the estimates in \cite{ABM}, $\psi_\pm - {1\over\sqrt{2}}e^{i(\theta + \phi_\pm)}\to 0$ uniformly as $|x|\to\infty$, with constants
  $\phi_\pm\in\RR$.  In particular, $\chi_\pm=\phi_\pm$ are constant,
  and $\psi_\pm = \rho e^{i(\theta+\chi_\pm(\theta))} = {1\over\sqrt{2}}f(r)e^{i(\theta + \phi_\pm)}$ as claimed.
\end{proof}

\section{Bifurcation of symmetric vortices}\label{bifsec}

\setcounter{thm}{0}

In this section we study the stability and bifurcation of the equivariant solutions of the Dirichlet problem in the unit disk ${\mathbb D_1}$.  For convenience, we replace the usual parameter $\eps$ by $\lambda=\eps^{-2}$ in both $E_\eps$ and the Ginzburg--Landau energy $G_\eps$, and (with abuse of notation) write 
\begin{gather*}
E_\lambda (\Psi) = \int_{{\mathbb D_1}} \left\{ \frac12|\nabla\Psi|^2 
         + {\lambda\over 4} \left(|\Psi|^2-1\right)^2
             + {\lambda\beta\over 4} \left( |\psi_+|^2 -|\psi_-|^2\right)^2 \right\}\, dx, \\
    G_\lambda(u)=\int_{{\mathbb D_1}} \left[ \frac12 |\nabla u|^2 
    + {\lambda\over 4}(|u|^2-1)^2\right].
\end{gather*}
We consider critical points 
$\Psi\in \HH:=\{\Psi\in H^1({\mathbb D_1};\mathbb C^2): \ \Psi|_{\partial {\mathbb D_1}}
=\rstwo (e^{i\theta},e^{i\theta})\}$, which solve the Dirichlet problem,
\be\label{DP}
\left\{
\begin{tabular}{lll}
$-\Delta \psi_+$ &$= \lambda (1-|\Psi|^2)\psi_+  
   +\lambda\beta(|\psi_-|^2-|\psi_+|^2)\psi_+$ &$\text{in ${\mathbb D_1}$}$ \\
$-\Delta \psi_- $&$= \lambda (1-|\Psi|^2)\psi_-  
  -\lambda\beta(|\psi_-|^2-|\psi_+|^2)\psi_-$
 &$\text{in ${\mathbb D_1}$}$\\
\hskip 7mm$\psi_\pm$ &$= \displaystyle\frac{1}{\sqrt{2}} e^{i\theta} $&$\text{on $\partial {\mathbb D_1}$}$
\end{tabular} 
\right. ,
\ee
in the unit disk ${\mathbb D_1}$, for fixed $\beta$, as $\lambda$ ranges in the half-line $\lambda\in (0,\infty)$.  We will show that if $0<\beta<1$, the symmetric vortex solution
$$    U_\lambda=\left({1\over\sqrt{2}} u_\lambda, {1\over\sqrt{2}} u_\lambda\right),  $$
where $u_\lambda=f_\lambda(r)e^{i\theta}$ is the degree-one equivariant solution of the classical Ginzburg--Landau model in the unit disk,
is stable (a strict minimizer of energy) for $\lambda=\eps^{-2}$ small, but loses stability at some $\lambda_\beta=\eps^{-2}_\beta>0$.  

First, we note that for $\lambda$ small enough, there are no other solutions to \eqref{DP}:

\begin{prop}\label{BBHtype}
There exists $\lambda_\beta^*>0$ so that for every $\lambda<\lambda_\beta^*$
the unique solution to \eqref{DP} is the symmetric solution $U_\lambda$.
\end{prop}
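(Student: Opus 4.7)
\textbf{Plan for Proof of Proposition~\ref{BBHtype}.}

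The strategy is the Bethuel--Brezis--H\'elein perturbation argument: any critical point of $E_\lambda$ in $\HH$ must lie $H^1$-close to the harmonic extension of the boundary data once $\lambda$ is small, and local uniqueness in such a neighborhood is then delivered by the implicit function theorem. Three ingredients are required: (i) a uniform $L^\infty$ bound on solutions, (ii) a uniform $H^1$-rate of convergence to the harmonic extension as $\lambda\to 0$, and (iii) the IFT applied to the linearization at $\lambda=0$.

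First I would establish, by elliptic bootstrap and the maximum principle, a $\lambda$-independent bound $|\tilde\Psi|\le M(\beta)$ for any classical solution $\tilde\Psi$ of \eqref{DP}. From the identity
\[
\Delta|\tilde\psi_\pm|^2 = 2|\nabla\tilde\psi_\pm|^2 - 2\lambda|\tilde\psi_\pm|^2\bigl[(1-|\tilde\Psi|^2)\pm\beta(|\tilde\psi_-|^2-|\tilde\psi_+|^2)\bigr],
\]
applying the maximum principle componentwise yields, at any positive interior maximum of $|\tilde\psi_\pm|^2$, the algebraic constraint $(1+\beta)|\tilde\psi_\pm|^2+(1-\beta)|\tilde\psi_\mp|^2\le 1$. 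Combined with the boundary value $|\tilde\psi_\pm|=\rstwo$ this bounds each component uniformly, independently of $\lambda$.

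Next, let $\Psi_0(z)=\rstwo(z,z)$ be the componentwise harmonic extension of the symmetric boundary data \eqref{sym}, and set $\Phi=\tilde\Psi-\Psi_0\in H^1_0({\mathbb D}_1;\CC^2)$. Testing \eqref{DP} against $\Phi$, using the harmonicity of $\Psi_0$ to annihilate the cross term $\int\langle\nabla\Psi_0,\nabla\Phi\rangle$, and invoking the $L^\infty$ bound of Step 1 together with the Poincar\'e inequality produces
\[
\|\nabla\Phi\|_{L^2}^2 \le \lambda\, C(M,\beta)\,\|\Phi\|_{L^2} \le \lambda\, C'(M,\beta)\,\|\nabla\Phi\|_{L^2},
\]
whence $\|\tilde\Psi-\Psi_0\|_{H^1}\le C''\lambda$ uniformly over the set of \emph{all} solutions.

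Finally I would apply the IFT to the real-analytic map $F(\lambda,\Psi):=-\Delta\Psi-\lambda N(\Psi)$ from $\RR\times\HH$ into $H^{-1}({\mathbb D}_1;\CC^2)$, where $N(\Psi)$ encodes the cubic nonlinearity of \eqref{DP}. Since $F(0,\Psi_0)=0$ and $D_\Psi F(0,\Psi_0)=-\Delta: H^1_0\to H^{-1}$ is an isomorphism, there exist $\delta,\lambda_0>0$ and a unique branch $\lambda\mapsto\Psi(\lambda)$ of solutions in the $H^1$-ball $B_\delta(\Psi_0)$ for $|\lambda|<\lambda_0$. The symmetric family $U_\lambda$ also enjoys the estimate of Step 2 (applied to itself, or deduced from the well-known convergence of the scalar GL minimizer $u_\lambda$ to $z$ as $\lambda\to 0$), so $U_\lambda$ coincides with this branch. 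The uniform bound from Step 2 then forces \emph{every} solution into $B_\delta(\Psi_0)$ once $\lambda<\delta/C''$, giving uniqueness for $\lambda<\lambda_\beta^*:=\min(\lambda_0,\delta/C'')$. The main obstacle is Step 1: the coupling through the $\beta$-term blocks a direct maximum principle applied to $|\Psi|^2-1$, forcing the componentwise analysis above; once this is in hand, Steps 2 and 3 are routine perturbation-from-harmonic-extension arguments.
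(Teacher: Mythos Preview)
Your argument is correct, but the paper proceeds by a shorter and more structural route. Rather than perturbing from the harmonic extension via the implicit function theorem, the paper exploits the variational structure directly: it first records (citing \cite{AB2}) the a~priori bound $|\Psi|\le 1$ for \emph{any} solution of \eqref{DP}, then computes the second variation $E''_\lambda(\Psi)[\Phi]$ and observes that for $|\Psi|\le 1$ one has the uniform lower bound
\[
E''_\lambda(\Psi)[\Phi]\ \ge\ \int_{\mathbb D_1}\bigl(|\nabla\Phi|^2 - C(\beta)\lambda\,|\Phi|^2\bigr)\,dx.
\]
Choosing $\lambda^*_\beta$ so that $C(\beta)\lambda^*_\beta$ is below the first Dirichlet eigenvalue of $-\Delta$ makes $E_\lambda$ strictly convex on the convex set $\{\Psi\in\HH:\ |\Psi|\le 1\}$, and uniqueness of the critical point is immediate. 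This avoids your Steps~2 and~3 entirely: no convergence rate to $\Psi_0$ and no IFT are needed. Your approach, on the other hand, is more robust---it would apply equally to a non-variational perturbation of \eqref{DP}---and gives the extra quantitative information $\|\Psi-\Psi_0\|_{H^1}=O(\lambda)$.

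One minor correction to your commentary: the ``main obstacle'' you flag in Step~1 is not actually present. Summing your componentwise identities gives
\[
\Delta|\Psi|^2 = 2|\nabla\Psi|^2 - 2\lambda(1-|\Psi|^2)|\Psi|^2 + 2\lambda\beta\bigl(|\psi_+|^2-|\psi_-|^2\bigr)^2,
\]
and since the $\beta$-term carries a favorable sign, the maximum principle applied directly to $|\Psi|^2$ yields $|\Psi|\le 1$ at an interior maximum (this is the content of Lemma~4.2 of \cite{AB2}). Your componentwise detour is unnecessary, though harmless.
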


\begin{proof}
First, define the convex set $\mathcal B=\{\Psi\in\HH: \ |\Psi(x)|\le 1 \ \text{in ${\mathbb D_1}$}\}$.  By Lemma~4.2 of \cite{AB2}, any solution of \eqref{DP} lies in $\mathcal B$.
The second variation of the energy around $\Psi=(\psi_+,\psi_-)\in\HH$ in direction $\Phi=(\varphi_+,\varphi_-)\in H^1_0({\mathbb D_1};\CC^2)$ is:
\begin{multline*}
  E''_\lambda(\Psi)[\Phi] = 
    \int_{{\mathbb D_1}} \left\{ |\nabla\Phi|^2 + \lambda (|\Psi|^2-1)|\Phi|^2
       + 2\lambda \llan \Psi,\Phi\rran^2 \right. \\ 
       \left.
        + {\beta\lambda} \left[ |\psi_+|^2-|\psi_-|^2\right]
                     \left[ |\varphi_+|^2-|\varphi_-|^2\right]
         + {2\beta\lambda} \left[
                \llan\psi_+,\varphi_+\rran
                - \llan \psi_-,\varphi_-\rran\right]^2 \right\} .
\end{multline*}
For any $\Psi\in \mathcal B$, we have
$$  E''_\lambda(\Psi)[\Phi] \ge  \int_{{\mathbb D_1}} \left\{ |\nabla\Phi|^2 
- {C(\beta)\lambda}|\Phi|^2\right\} dx,  $$
with constant $C(\beta)\ge 0$ independent of $\lambda,\Phi$.  By choosing
$\lambda^*_\beta$ sufficiently small that ${C(\beta)\lambda}$ is smaller than the first Dirichlet eigenvalue of the Laplacian in ${\mathbb D_1}$ we may conclude that $E''_\lambda(\Psi)$ is a strictly positive definite quadratic form on $H^1_0({\mathbb D_1};\CC)$, for any $\Psi\in\mathcal B$.
Thus, $E_\lambda$ is strictly convex on $\mathcal B$, and hence it has a unique critical point.
\end{proof}

We also observe that the symmetric vortex solution $U_\la$ is the unique solution to \eqref{DP} for which both components vanish at the origin:

\begin{prop}\label{unique2}
Suppose $\Psi$ is a solution to \eqref{DP} with $\psi_+(0)=0=\psi_-(0)$. Then $\Psi=U_\la$.
\end{prop}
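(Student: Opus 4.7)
The plan is to adapt the Mironescu-style Pohozaev argument from the proof of Proposition~\ref{unique} to the bounded-domain setting. Let $f_\la$ be the Ginzburg--Landau profile so that $u_\la=f_\la(r)e^{i\theta}$, write $\rho(r):=f_\la(r)/\sqrt{2}$ so that $U_\la=(\rho e^{i\theta},\rho e^{i\theta})$, and define the quotients
\[
  w_\pm(x) := \frac{\psi_\pm(x)}{\rho(|x|)}, \qquad x\in {\mathbb D}_1\setminus\{0\}.
\]
By elliptic regularity $\psi_\pm$ is smooth. Since $f_\la(r)\sim f_\la'(0)\,r$ near $r=0$ with $f_\la'(0)>0$ and $\psi_\pm(0)=0$, the Taylor expansion $\psi_\pm(x)=D\psi_\pm(0)\cdot x + O(|x|^2)$ shows $w_\pm$ is bounded on ${\mathbb D}_1\setminus\{0\}$ with angular limit
\[
  W_\pm(\theta) := \lim_{r\to 0} w_\pm(r,\theta) = \frac{\sqrt{2}}{f_\la'(0)}\bigl(a_\pm\cos\theta+b_\pm\sin\theta\bigr),\qquad (a_\pm,b_\pm):=D\psi_\pm(0)\in\CC^2.
\]
A direct substitution shows that $w_\pm$ satisfies the same system \eqref{quo} as in Proposition~\ref{unique}.

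Multiplying each equation of \eqref{quo} by $x\cdot\nabla w_\pm$ (in the $\llan\,\cdot\,,\,\cdot\,\rran$ sense) and integrating over the annulus ${\mathbb D}_1\setminus {\mathbb D}_{R_0}$ produces exactly the Pohozaev-type identity $F(1)-F(R_0)=G(1,R_0)$, with $F,G$ the boundary and bulk quantities defined in the proof of Proposition~\ref{unique}. Since $\rho,\rho'>0$ on $(0,1)$ (standard for the disk profile), the integrand of $G$ is pointwise nonnegative, so $G(1,R_0)\ge 0$.

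The crux is the boundary analysis. At $r=1$, the Dirichlet condition $w_\pm|_{\partial {\mathbb D}_1}=e^{i\theta}$ gives $|w_\pm|=1$ and $|\partial_\tau w_\pm|=1$, so the potential and tangential pieces of $F$ cancel, leaving
\[
  F(1) \;=\; -\tfrac{1}{2}\sum_\pm\int_{\partial {\mathbb D}_1}|\partial_r w_\pm|^2\,d\sigma \;\le\; 0.
\]
At the inner boundary, the scalings $\rho(R_0)=O(R_0)$ together with the angular limit $w_\pm(R_0,\theta)\to W_\pm(\theta)$ kill the potential and $|\partial_r w_\pm|^2$ contributions, so that
\[
  F(R_0) \;\longrightarrow\; -\tfrac{1}{2}\sum_\pm \int_0^{2\pi}\bigl(|W_\pm(\theta)|^2 - |W_\pm'(\theta)|^2\bigr)\,d\theta.
\]
The decisive cancellation is that for any $W(\theta)=a\cos\theta+b\sin\theta$ with $a,b\in\CC$ one has $\int_0^{2\pi}|W|^2\,d\theta=\pi(|a|^2+|b|^2)=\int_0^{2\pi}|W'|^2\,d\theta$, hence $F(R_0)\to 0$.

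Letting $R_0\to 0$ gives $F(1)=G(1,0)$; since $F(1)\le 0\le G(1,0)$, both quantities must vanish. From $G(1,0)=0$ together with $\rho,\rho'>0$ we read off $\partial_r w_\pm\equiv 0$ and $|w_\pm|\equiv 1$ throughout ${\mathbb D}_1$. Therefore $w_\pm$ depends only on $\theta$ and is unimodular, and the Dirichlet trace forces $w_\pm(r,\theta)\equiv e^{i\theta}$. Hence $\psi_\pm(x)=\rho(|x|)e^{i\theta}=u_\la(x)/\sqrt{2}$, i.e.\ $\Psi=U_\la$. The main obstacle is the outer-boundary analysis at $r=1$: unlike the entire-space setting of Proposition~\ref{unique}, where the decay estimates \eqref{est1}--\eqref{est3} force the boundary terms to vanish at infinity, here $F(1)$ is not zero a priori and one must exploit the precise form of the Dirichlet data to arrange the cancellation that reduces $F(1)$ to a manifestly nonpositive quantity, which can then be paired against $G\ge 0$ to yield the desired rigidity.
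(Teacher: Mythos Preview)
Your proof is correct and follows essentially the same approach as the paper: form the quotients $w_\pm=\psi_\pm/\rho$, derive the Pohozaev identity $F(1)-F(R_0)=G(1,R_0)$ with $G\ge 0$, use the Dirichlet data to see that $F(1)=-\tfrac12\sum_\pm\int_{\partial\mathbb D_1}|\partial_r w_\pm|^2\le 0$, and conclude $G(1,0)=0$. Your treatment of the inner boundary is in fact more explicit than the paper's (which simply says ``arguing as above''); your observation that the angular limit $W_\pm$ is a first Fourier mode, so that $\int_0^{2\pi}|W_\pm|^2=\int_0^{2\pi}|W_\pm'|^2$ and hence $F(R_0)\to 0$, is exactly the mechanism underlying that step.
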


\begin{proof}
The proof is exactly as for Proposition~\ref{unique}:  as before, let $w_\pm={\psi_\pm\over \rho(|x|)}$, with $\rho(r)={1\over\sqrt{2}}|u_\la|$,
 so $w_\pm=e^{i\theta}$ on $\partial\mathbb{D}_1$.  In particular, $|w_\pm|^2=|\partial_\tau w_\pm|^2=1$ on $\partial\mathbb{D}_1$, so 
$$  F(1)= -\frac12\int_{\partial\mathbb{D}_1} |\partial_r w_+|^2 + |\partial_r w_-|^2\le 0. $$
Arguing as above, we may conclude that $G(1,0)=0$, and hence $|w_\pm|=1$ and 
$\partial_r w_\pm=0$ in $\mathbb{D}_1$, with $w_\pm=e^{i\theta}$ on $\partial\mathbb{D}_1$.  Thus, $w_\pm(x)= e^{i\theta}$ on $\mathbb{D}_1$, that is $\psi_\pm=\rho(r)e^{i\theta}={1\over\sqrt{2}}u_\la$.
\end{proof}

It is important to recognize the role played by two groups of symmetry acting on the problem.  First, the problem is invariant under the action of  the group $\mathbb{S}^1$, with the following representation:
for $\psi\in H^1_{e^{i\theta}}(\ddd;\mathbb{C})$, writing the independent variable in complex form $z=x+iy$, we represent $\xi\in \mathbb{S}^1\subset\CC$ via
$$   (R_\xi \psi) (z)= \overline{\xi} \psi(\xi\, z).
$$
By abuse of notation, for 
$\Psi=(\psi_+,\psi_-)\in \HH$ we
define
$$   R_\xi \Psi = \left( R_\xi\psi_+\, , \, R_\xi\psi_-\right).  $$
  Our problem is also invariant with respect to the involution,
\be\label{T}  TV = \left( -v_-(-x) \, , \,  -v_+(-x) \right), \qquad V=(v_+(x),v_-(x)).
\ee
The equivariant solution $U_\lambda$ is fixed by both groups, $R_\xi U_\lambda= U_\lambda = TU_\lambda$, for all $\xi\in \mathbb{S}^1$.

With the goal of studying bifurcations from the symmetric solutions, our first task is to study the spectrum of the linearization 
of the energy around $U_\lambda$,
$$   E''_\la (U_\la)[\Phi] =
     \int_{{\mathbb D_1}} \left[
      |\nabla\Phi|^2 +\la (|u_\la|^2-1) |\Phi|^2 
      + \la\llan u_\la, \varphi_+ + \varphi_-\rran^2
       + \la\beta \llan u_\la, \varphi_+ - \varphi_-\rran^2\right],
$$
for $\Phi=(\varphi_+,\varphi_-)\in H^1_0(\ddd; \CC^2)$.  The quadratic form $E''_\la(U_\la)$ is associated to the linearized operator,
$$  L_\lambda \Phi =
\left[ \begin{matrix}
   L_\lambda^+ \Phi \\ L_\lambda^- \Phi
\end{matrix}
\right]  =
\left[\begin{matrix}
 -\Delta \varphi_+ + \lambda(|u_\lambda|^2 -1) \varphi_+
  +\lambda \lan u_\lambda, \varphi_+ + \varphi_-\ran u_\lambda
    + \beta\lambda\lan u_\lambda, \varphi_+ - \varphi_-\ran u_\lambda \\
     -\Delta \varphi_- + \lambda(|u_\lambda|^2 -1)\varphi_-
  +\lambda \lan u_\lambda, \varphi_+ + \varphi_-\ran u_\lambda
    - \beta\lambda\lan u_\lambda, \varphi_+ - \varphi_-\ran u_\lambda
\end{matrix}\right].
$$
 For each $\lambda>0$, $L_\la$ defines a self-adjoint operator acting on its domain $H^2\cap H^1_0({\mathbb D_1};\mathbb C^2)\subset L^2({\mathbb D_1};\mathbb C^2)$ (with real scalar product).  Moreover, by elliptic regularity theory, for all $\la>0$ is has compact resolvent, and thus discrete spectrum, consisting of eigenvalues, 
$$ \sigma(L_\la) = \{ \mul \le \mu_2(\lambda) \le
\mu_3(\la) \le \cdots\},  $$
repeated according to their (finite) multiplicities, and ordered by the min-max principle for each fixed $\la>0$. .
We  seek the critical value $\la=\lab$ at which the radial solution loses stability with increasing $\la$.  
The group invariance under $R_\xi$, $\xi\in\mathbb{S}^1$, and $T$ are inherited by the linearized operator, and each of the eigenspaces is invariant under these two symmetries.

We will uncover the spectral properties of the linearized operator $L_\la$ via a sequence of reductions, and prove analytic dependence on $\la$ of eigenvalues and eigenfunctions provided they are simple modulo the action of the group $\mathbb{S}^1$.

%The following proposition summarizes the behavior of the ground state eigenvalue $\mul$ and the associated eigenspaces: 
%\begin{prop}\label{m2}
%\begin{enumerate}
%\item[(1)]  For every $\beta\ge 1$, $\mu_1(\lambda)>0$ for all $\la>0$.
%\item[(2)]
%For every $\beta\in (0,1)$, there exists $\lab>0$ so that 
%$\mul>0$ if $\la<\lab$, and $\mul<0$ when $\la>\lab$.  
%\item[(3)] For every $\beta\in (0,1)$, there exists $\la^*< \lab$ so that 
%for each $\la>\la^*$, the ground state eigenvalue $\mul$ of $L_\la$ is simple modulo the $\mathbb{S}^1$ action:  its eigenspace  is given by
%$$   \XL = \left\{ s\, (w_\xi, -w_\xi), \  w_\xi = R_\xi\, w_1 =
% \overline{\xi} \, a_0(r) - \xi\, a_2(r)\, e^{2i\theta}\ : \ s\in \RR, \ \xi\in \so\right\},$$
%for unique $a_0=a_0(r;\la)$, $a_2=a_2(r;\la)$, with $a_0(r;\la)\ge a_2(r;\la)\ge 0$ and $a_0(0;\la)=1$.  Moreover, $a_0(r;\la)>0$ in $[0, 1)$ and  $a_2(r;\la)\sim r^2$ near $r=0$, for all $\la>0$.
%\item[(4)]  There exists an open interval $\Lambda\ni\lab$ and $\eta>0$ so that the second eigenvalue $\mu_2(\lambda)$ of $L_\la$ satisfies $\mu_2(\lambda)>\eta$ for all $\lambda\in\Lambda$.
%\item[(5)]  The ground-state eigenvalue $\mu_1(\lambda)$ and the associated eigenvectors $W_\xi=(w_\xi,-w_\xi)\in\XL$ are real analytic functions of $\lambda\in\Lambda$.
%
%\end{enumerate}
%\end{prop}
%
\subsection*{Reduction of $L_\la$}

First, we reduce to a scalar problem:  let
$$  \LLL \varphi := -\Delta \varphi + \la\, (|u_\la|^2-1)\varphi
    + 2\beta \la\,\langle   u_\la,\varphi \rangle u_\la,  $$
as an operator acting on $L^2({\mathbb D_1}; \mathbb C)$, with associated quadratic form
$$  {\mathcal Q}_\la(w):= \int_{{\mathbb D_1}}\left[ |\nabla w|^2 + \la(|u_\la|^2-1)|w|^2
      + 2\beta\la \langle  u_\la,w  \rangle^2\right] . $$
We also define the linearization of the classical Ginzburg--Landau energy,
$$  G''_\la(u_\la)[\varphi] = \int_{{\mathbb D_1}} \left[
      |\nabla\varphi|^2 +\la (|u_\la|^2-1) |\varphi|^2 
      + 2\la\llan u_\la, \varphi\rran^2\right].
$$
and
$$  \LGL \varphi:= -\Delta\varphi
 + \la (|u_\la|^2-1)\varphi + 2\la \langle u_\la,\varphi   \rangle u_\la.  $$
 By  \cite{M1}, $\LGL$ is a  positive definite operator for all $\la>0$.

\begin{lem}\label{step1}
\begin{enumerate}
\item[(1)]
For any $\beta\ge 1$, $L_\la$ is  positive definite for all $\la>0$.  
\item[(2)]
For any $\beta>0$,  $\mu$ is an eigenvalue of $L_\la$ with eigenfunction $\Phi$ if and only if:
\begin{enumerate}
\item[(i)] either 
$\mu\in\sigma(\LGL)$; 
\item[(ii)] or $\mu\in\sigma(\LLL)$ and $\Phi=(\varphi,-\varphi)$, where $\varphi$ is an eigenfunction for $\LLL$.
\end{enumerate}
% $L_\la$ is strictly positive definite if and only if the quadratic form
%$$  Q_R(w):= \int_{D_R}\left[ |\nabla w|^2 + (|u_R|^2-1)|w|^2
%      + 2\beta (u_R,w)^2\right]  $$
%is positive definite over $w\in  H_0^1(D_R;\CC)$.  
%\item[(c)] $W$ is a ground-state eigenfunction for $E''_R(U_R)$ if and only if $W=(w, -w)$ with $w$ a ground-state eigenfunction for $Q_R$.
\end{enumerate}
\end{lem}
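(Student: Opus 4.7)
The plan is to diagonalize the quadratic form $E''_\la(U_\la)[\Phi]$ by an orthogonal change of basis in the target $\CC^2$. Specifically, for $\Phi=(\varphi_+,\varphi_-)\in H^1_0(\ddd;\CC^2)$, set
$$
s = \rstwo(\varphi_+ + \varphi_-), \qquad d = \rstwo(\varphi_+ - \varphi_-),
$$
so that $(\varphi_+,\varphi_-)\leftrightarrow (s,d)$ is a linear isometry of $L^2(\ddd;\CC^2)$ that preserves $H^1_0$. Since $|\Phi|^2=|s|^2+|d|^2$, $|\nabla\Phi|^2=|\nabla s|^2+|\nabla d|^2$, $\llan u_\la,\varphi_++\varphi_-\rran=\sqrt{2}\llan u_\la,s\rran$, and $\llan u_\la,\varphi_+-\varphi_-\rran=\sqrt{2}\llan u_\la,d\rran$, substituting into the expression for $E''_\la(U_\la)[\Phi]$ given in the excerpt should yield a clean decoupling
$$
E''_\la(U_\la)[\Phi] \;=\; G''_\la(u_\la)[s] \;+\; \QQQ(d),
$$
with no cross terms in $s$ and $d$. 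Equivalently, under the change of variables, the operator $L_\la$ becomes the direct sum $\LGL \oplus \LLL$.

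Once this decoupling is in place, part (1) is immediate. Indeed, a direct comparison gives
$$
\QQQ(d) - G''_\la(u_\la)[d] \;=\; 2(\beta-1)\la\int_{\ddd}\llan u_\la,d\rran^2 \;\ge\; 0 \quad\text{when } \beta\ge 1,
$$
so $\QQQ(d)\ge G''_\la(u_\la)[d]$. Invoking the positive-definiteness of $\LGL$ established by Mironescu \cite{M1} for the degree-one equivariant Ginzburg--Landau solution, both $G''_\la(u_\la)[s]$ and $\QQQ(d)$ are strictly positive for nonzero $s$, resp.\ nonzero $d$, and hence $E''_\la(U_\la)[\Phi] > 0$ for every $\Phi\not\equiv 0$.

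For part (2), the direct-sum structure $L_\la \cong \LGL \oplus \LLL$ immediately identifies $\sigma(L_\la) = \sigma(\LGL)\cup\sigma(\LLL)$. An eigenvalue $\mu\in\sigma(\LGL)$ corresponds to an eigenfunction with $d\equiv 0$, i.e.\ $\Phi=(\varphi,\varphi)$ with $\LGL\varphi=\mu\varphi$; similarly, $\mu\in\sigma(\LLL)$ corresponds to $s\equiv 0$, i.e.\ $\Phi=(\varphi,-\varphi)$ with $\LLL\varphi=\mu\varphi$. If $\mu$ happens to lie in both spectra, then linear combinations produce eigenfunctions of mixed form, but both alternatives in the lemma are simultaneously realized, so the ``either/or'' statement still holds.

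The only thing requiring care is the algebra producing the decoupling, specifically checking that the mixed potential term $\la(|u_\la|^2-1)|\Phi|^2$ splits diagonally and that the two rank-one quadratic contributions, $\la\llan u_\la,\varphi_++\varphi_-\rran^2$ and $\beta\la\llan u_\la,\varphi_+-\varphi_-\rran^2$, become exactly the $s$- and $d$-parts of $G''_\la$ and $\QQQ$ respectively. This is the crux of the reduction but is essentially a one-line computation; no analytic obstacle is expected.
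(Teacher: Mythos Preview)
Your proposal is correct and takes essentially the same approach as the paper: both exploit the sum/difference decomposition $\varphi_+\pm\varphi_-$ to decouple $L_\la$ into a Ginzburg--Landau piece and the operator $\LLL$, and both invoke Mironescu's positive-definiteness of $\LGL$ for part~(1). Your presentation is in fact slightly cleaner: you state the direct-sum identity $L_\la\cong\LGL\oplus\LLL$ via an explicit orthogonal change of basis, whereas the paper reaches the same conclusion for part~(2) more indirectly, by adding the two component equations $L_\la^+\Phi+L_\la^-\Phi=\LGL(\varphi_++\varphi_-)$ and arguing that if $\mu\notin\sigma(\LGL)$ then $\varphi_++\varphi_-=0$.
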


\begin{proof}
A simple calculation shows that
\be\label{id1}  E''_\la(U_\la)[\Phi] = G''_\la(u_\la)[\varphi_+] + G''_\la(u_\la)[\varphi_-]
      + (\beta-1)\la\int_{{\mathbb D_1}}   \llan u_\la, \varphi_+-\varphi_-\rran^2.  
\ee
By \cite{M1}, $G''_\la(u_\la)$ is positive definite, and thus \eqref{id1} implies that $L_\la$ is positive definite for $\beta\ge 1$, proving {\it (1)}.

Now let $\beta>0$ be arbitrary, and assume $\Phi=(\varphi_+,\varphi_-)$ solves $L_\la\Phi =\mu\Phi$.
We then have
\begin{align*}  
L_\la^+\Phi &= \LGL\varphi_+ + 
    (\beta-1) \la\llan u_\la,\varphi_+ - \varphi_-\rran u_\la \\
L_\la^-\Phi &= \LGL\varphi_- -
    (\beta-1) \la\llan u_\la,\varphi_+ - \varphi_-\rran u_\la .
\end{align*}
Adding these two identities together,
$$ \LGL(\varphi_+ +\varphi_-) = L_\la^+\Phi + L_\la^-\Phi = \mu(\varphi_+ +\varphi_-) .  $$
Thus, if $\mu\notin\sigma(\LGL)$, we must have $-\varphi_-=\varphi_+=:\varphi$, and moreover $L_\la^\pm\Phi = \LLL \varphi =\mu\varphi$.\\
This proves the only if part. The ``if'' part is obvious.
\end{proof}

Denote the eigenvalues (repeated by multiplicity) of $\LLL$ by
$$  \sigma(\LLL)=\{\tilde\mu_1(\la)\le\tilde\mu_2(\la)\le\cdots\}.  $$

%By the result of \cite{M1}, $\inf\sigma(\LGL)=:\tau(\lambda)>0$ for all $\la>0$, and is monotone decreasing in $\lambda$.  We thus have the following consequence of Lemma~\ref{step1}:
%
%\begin{cor}\label{corstep1}
%Suppose $\mu_1(\lambda_0)\le 0$ for some $\lambda_0>0$.  Then,
%for $\tau_0=\tau(\lambda_0)$ we have $\mu_1(\la)=\tilde\mu_1(\la)$ for every $\lambda\ge\lambda_0$.  The eigenfunctions of $L_\la$ have the form $\Phi=(\varphi,-\varphi)$, where $\varphi$ is an eigenfunction corresponding to $\LLL$.
%\end{cor}

%In particular, it will suffice to consider the operator $\LLL$ and its quadratic form $\mathcal{Q}_\la$ in the proof of Proposition~\ref{m2}.

%%%%%%%%%%

The next step is to decompose $w\in  H_0^1(D;\CC)$ in its Fourier modes in $\theta$:
$$
w=\sum_{n\in\ZZ} b_n(r) e^{in\theta}.
$$
  Using Parseval's identity, we have
$$  \int_{\ddd} \llan u_\la,w\rran^2 \, dx = {\pi\over 2} \int_0^1 
         f_\la^2(r) \sum_{n\in\ZZ} | b_{n+1} +\overline{b}_{1-n}|^2\,  r\, dr.  
$$
Consequently, the operator $\LLL$ can be identified to a direct sum in Fourier modes,
\begin{equation}\label{direct}
   \LLL w \cong \bigoplus_{n=0}^\infty \LLL^{(n)}(b_{n+1},b_{1-n}),
\end{equation}
where the operators $\LLL^{(n)}$ are associated to the quadratic forms
\begin{multline*}  \QQQ^{(n)}(b_{n+1},b_{1-n}):= \pi
\int_0^1 \biggl[ |b'_{n+1}|^2 + |b'_{1-n}|^2 
  + {(n+1)^2\over r^2}|b_{n+1}|^2 + {(1-n)^2\over r^2}|b_{1-n}|^2
   \\  
   + \la(f_\la^2-1)\left(|b_{n+1}|^2 + |b_{1-n}|^2\right)
   + \la\beta f_\la^2| b_{n+1} +\overline{b}_{1-n}|^2
\biggr] r\, dr,
\end{multline*}
for $n\ne 0$, and 
$$ \QQQ^{(0)}(b_1):= \pi\int_0^1 \left[ |b'_1|^2 + {1\over r^2} |b_1|^2
+ \la(f_\la^2-1)|b_1|^2 + \la\beta f_\la^2|b_1 +\overline{b_1}|^2
\right] r\, dr.
$$
If we let 
$$
\begin{aligned}
&X=\{ f:(0,1]\to\CC ; f(r)/r\in L^2, f'\in L^2,  f(1)=0\},\\
&Y=\{  f:(0,1]\to\CC ;  f'\in L^2,  f(1)=0\},
\end{aligned}
$$
then: $\QQQ^{(0)}$ acts on $X$, $\QQQ^{(1)}$ acts on $X\oplus Y$ and, for $n\neq 0, 1$, $\QQQ^{(1)}$ acts on $X\oplus X$.

The spectrum of this direct sum is given by the union of the eigenvalues of the operators $\LLL^{(n)}$.  As we show below, only one of these operators may contribute an eigenvalue near zero.  Define (in Fourier Space)
$$   \tilde\LLL \tilde w:= \bigoplus_{n\neq 1} \LLL^{(n)}(b_{n+1},b_{1-n}),
$$
where $\tilde w= \sum_{n\neq 0,2} b_n(r) e^{in\theta}$, and
so $\LLL w \cong \LLL^{(1)}(b_2,b_0) \oplus \tilde\LLL \tilde w.$  Let
$\tilde\QQQ$ denote the quadratic form associated to $\tilde\LLL$.

\begin{lem}\label{step2}  There exists a function $m_0(\lambda)>0$  such that $\inf \sigma(\tilde\LLL)\ge m_0(\lambda)>0$ for every $\lambda>0$ and, in addition, $\displaystyle \inf_I m_0(\la)>0$ for each compact interval $I\subset (0, \infty)$.
\end{lem}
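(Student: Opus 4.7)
The plan is to exploit the Fourier-mode decomposition $\tilde\LLL=\bigoplus_{n\ne 1}\LLL^{(n)}$ so that $\inf\sigma(\tilde\LLL)=\inf_{n\ne 1}\mu_1(\LLL^{(n)})$, and to show that each block $\LLL^{(n)}$ with $n\ne 1$ is positive definite, with a lower bound that is uniform over $n$ and locally uniform in $\la$. The key input is the strict positivity of the Ginzburg--Landau linearization $\LGL$ in the degree-one symmetric case, proved by Mironescu \cite{M1}.

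\textbf{Step 1 (comparison with Ginzburg--Landau).} A direct computation, parallel to the one carried out in the paper for $\LLL$, yields the identity
$$\QQQ^{(n)}(b_{n+1},b_{1-n})=Q^{GL,(n)}(b_{n+1},b_{1-n})+\pi\la(\beta-1)\int_0^1 f_\la^2\,|b_{n+1}+\overline{b_{1-n}}|^2\, r\,dr,$$
for $n\ge 1$, with the analogous identity in the single-profile case $n=0$. Since $Q^{GL,(n)}>0$ by \cite{M1}, this immediately settles the regime $\beta\ge 1$: $\LLL^{(n)}\ge \LGL^{(n)}\ge \mu_1(\LGL)>0$, so $m_0(\la)=\mu_1(\LGL)$ works.

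\textbf{Step 2 (high modes for $0<\beta<1$).} For $n\ge 2$, use $f_\la\le 1$ and $|b_{n+1}+\overline{b_{1-n}}|^2\le 2(|b_{n+1}|^2+|b_{1-n}|^2)$ to bound the negative perturbation by $2\pi\la(1-\beta)(\|b_{n+1}\|^2+\|b_{1-n}\|^2)$ in the weighted $L^2(r\,dr)$ norm. On the other hand the angular kinetic term contributes at least $\pi\int_0^1[(n+1)^2 r^{-2}|b_{n+1}|^2+(n-1)^2 r^{-2}|b_{1-n}|^2]\,r\,dr$, which by the weighted Poincaré inequality grows like $n^2(\|b_{n+1}\|^2+\|b_{1-n}\|^2)$. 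Hence there exists $N(\la)$ such that for all $n\ge N(\la)$ one has $\mu_1(\LLL^{(n)})\ge c(\la)n^2\to\infty$, so the high-mode contribution is not the obstruction.

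\textbf{Step 3 (low modes for $0<\beta<1$).} It remains to treat the finitely many blocks $n=0$ and $2\le n<N(\la)$. Each $\mu_1(\LLL^{(n)},\la)$ is continuous in $\la$ on $(0,\infty)$, and for $\la\to 0^+$ converges to the first Dirichlet eigenvalue of $-\Delta$ in that mode, which is strictly positive. I would rule out a crossing through zero as $\la$ varies as follows. For $n=0$ write $b_1=g+ih$ with $g,h$ real; the imaginary part $h$ decouples with a Ginzburg--Landau-type radial form containing no spin contribution, hence positive, while the real part $g$ solves a radial Sturm--Liouville problem whose positivity is obtained by testing against $f_\la'$ or $f_\la$ and using the radial Ginzburg--Landau equation, in the spirit of \cite{M1}. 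For each fixed $n\ge 2$, a nontrivial kernel element of $\LLL^{(n)}$ would, via the same Sturm--Liouville reasoning and the symmetry action of $R_\xi$ and $T$, have to correspond to a translation or rotation mode of $u_\la$; but these live entirely in the $n=1$ block, giving a contradiction.

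\textbf{Step 4 (local uniformity).} Combining the $n^2$ growth from Step 2 with the continuity and positivity from Step 3 yields a function $m_0(\la)>0$, continuous in $\la$, so $\inf_I m_0(\la)>0$ on any compact $I\subset(0,\infty)$.

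The main obstacle is Step 3: ruling out the existence of a zero eigenvalue of $\LLL^{(n)}$ for $n=0$ and the small values $n\ge 2$ when $0<\beta<1$. This is the spectral core of the lemma and requires either the explicit Sturm--Liouville/Pohozaev-type computations following \cite{M1}, or a symmetry argument identifying the would-be kernel with an $n=1$ mode.
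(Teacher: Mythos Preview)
Your Step~3 is the genuine gap, and you correctly flag it as such: the sketch you give for $n=0$ and for the finitely many $2\le n<N(\la)$ is not a proof.  For $n\ge 2$ the assertion that a kernel element ``would have to correspond to a translation or rotation mode of $u_\la$'' is exactly what needs to be shown, and nothing in your outline supplies it; the Sturm--Liouville/symmetry considerations you allude to do not obviously force the kernel into the $n=1$ block once the $\beta$-coupling is present.  As written, the argument does not close.

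The paper's proof avoids this difficulty entirely by a much simpler observation that you overlook: in the quadratic form $\QQQ^{(n)}$ the $\beta$-dependent term is $\la\beta f_\la^2\,|b_{n+1}+\overline{b_{1-n}}|^2\ge 0$ for \emph{every} $\beta>0$, so it can simply be discarded.  There is no need for your Step~1 splitting into $\beta\ge 1$ versus $\beta<1$, and no $(\beta-1)$-correction to control.  After dropping that term and using $n^2\ge 1$ for the relevant Fourier indices, the paper collapses the whole sum to a single scalar form via the diamagnetic-type substitution $a_1:=i\bigl(\sum_{n\ne 0,2}|b_n|^2\bigr)^{1/2}$, obtaining
\[
\tilde\QQQ(\tilde w)\ \ge\ \pi\int_0^1\Bigl\{|a_1'|^2+\frac{1}{r^2}|a_1|^2+\la(f_\la^2-1)|a_1|^2\Bigr\}\,r\,dr,
\]
which is a $\beta$-independent Ginzburg--Landau radial form whose strict positivity is exactly what is proved in \cite{M1}.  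This single reduction replaces your Steps~1--3 in one stroke.

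For the local uniformity (your Step~4) the paper does not appeal to continuity in $\la$ of the infimum; instead it rescales to the fixed equation on $\mathbb{D}_R$ with $R=\sqrt{\la}$, shows the rescaled bound $\hat m_0(R)$ is strictly decreasing in $R$ (using that $F(\cdot,R')<F(\cdot,R)$ for $R'>R$ together with domain monotonicity), and then reads off $m_0(\la)=\la\,\hat m_0(\sqrt{\la})\ge \Lambda_0\,\hat m_0(\sqrt{\Lambda_1})>0$ on $[\Lambda_0,\Lambda_1]$.  Your continuity approach would also work, but requires an argument you have not supplied.
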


\begin{proof}  This follows from the reductions described on page 337 of \cite{M1}.  Let $\tilde w\in  H_0^1(\ddd;\CC)$ with associated coefficients $\{b_n(r)\}_{n\neq 0,2}$, such that $\sum_{n\neq 0,2}\|b_n\|_{L^2((0, 1) ; r\, dr)}^2=1$.  We define $a_1:=i(\sum_{n\neq 0,2} |b_n|^2)^{1/2}$.  Then, $a_1(r)$ is purely imaginary,
$\|a_1\|_{L^2((0, 1) ; r\, dr)}=1$, and 
\begin{align*} \tilde\QQQ(\tilde w)&\ge
 \pi\sum_{n\neq 0,2} \int_0^1 \left\{ |b'_n(r)|^2
          + {n^2\over r^2} |b_n(r)|^2 +\lambda (f_\la^2(r)-1)|b_n(r)|^2
                                \right\} r\, dr \\
        & \ge \pi\int_0^1 \left\{ |a'_1(r)|^2 + {1\over r^2} a_1^2(r) 
             + \la(f_\la^2(r)-1)|a_1(r)|^2 \right\} r\, dr 
              \\ &= \frac12 Q_\la^{(0)}(a_1,a_1).
\end{align*}
Now set $m_0(\lambda):=\inf \frac12 Q_\lambda^{(0)}(a_1,a_1)$, where the infimum is taken over all $a_1\in H^1_0((0,1))$ with $\|a_1\|_{L^2((0,1);r\,dr)}=1$.  In  \cite{M1} it is proven that $m_0(\la)>0$ for all $\la>0$. 
By the min-max principle, $\inf\sigma(\tilde\LLL)\ge m_0(\la)>0$. 

It remains to show that $m_0(\lambda)$ can be bounded away from zero when $\la$ is bounded away from zero.  
 To do that, we make a transformation as in \cite{M1} in order to consider a fixed system of equations on an increasing family of disks ${\mathbb D}_R$, with $R=\sqrt{\lambda}$.  We define 
$\hat u_R(x):= u_\lambda(x/\sqrt{\lambda})=F(r,R)e^{i\theta}$, and $\hat a_1(r)=a_1(r/\sqrt{\lambda}), $
for $r\in [0,R]$.  In this way,
$$  {m_0(\lambda)\over\la}=\hat m_0(R):=
  \inf_{\|\hat a_1\|_{L^2((0,R) ; r\, dr)}=1}
    \pi \int_0^R \left\{ |\hat a'_1|^2 + {1\over r^2} \hat a_1^2(r) 
             + (F(r,R)^2(r)-1)|\hat a_1|^2 \right\} r\, dr .
$$
We now show $\hat m_0(R)$ is  decreasing in $R>0$.
First, the radial profiles $f_R(r)$ are pointwise decreasing in $R$:
whenever $R<R'$, we have $F(r,R')<F(r,R)$ for all $r\in (0,R)$. (Indeed, on $[0,R]$, $ F(\cdot,R')$ is a subsolution for  the equation satisfied by $F(\cdot,R)$.)  Using this fact and the inclusion $H^1_0({\mathbb D}_R)\subset H^1_0({\mathbb D}_{R'})$, we find that 
$\hat m_0(R')\le \hat m_0(R)$. In case the two are equal, this would imply that the minimizers of $Q_\lambda^{(0)}$ in ${\mathbb D}_{R'}$ (which are nonnegative,) vanish identically in ${\mathbb D}_{R'}\setminus{\mathbb D}_{R}$. This is impossible, by the maximum principle. Thus $\hat m_0(R)$ is strictly decreasing.

Finally, given a fixed interval $I=[\Lambda_0,\Lambda_1]$, whenever $\la\in I$,
$$  m_0(\la)= \la\hat m_0(\sqrt{\la}) \ge \Lambda_0 \hat m_0(\sqrt{\Lambda_1}):= C_I>0.  $$
\end{proof}

Combining the results of Lemmas~\ref{step1} and \ref{step2}, we may conclude:

\begin{cor}\label{cor1}
If $\mu_k(\la)\in\sigma(L_\la)$ and $\mu_k(\la)\le 0$ for some $\la>0$, then $\mu_k(\la)\in\sigma(\LL_\la^{(1)})$.
\end{cor}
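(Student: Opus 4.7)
The plan is to simply string together the two preceding lemmas, noting that each of the two ``exits'' from a nonpositive eigenvalue is closed off by a positivity statement already established.

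First I would invoke Lemma~\ref{step1}(2): any eigenvalue $\mu_k(\lambda)$ of $L_\lambda$ lies either in $\sigma(\LGL)$ or in $\sigma(\LLL)$. Since \cite{M1} shows that $\LGL$ is positive definite for every $\lambda>0$, the first alternative forces $\mu_k(\lambda)>0$. Under the hypothesis $\mu_k(\lambda)\le 0$, this alternative is excluded, so $\mu_k(\lambda)\in\sigma(\LLL)$.

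Next I would use the Fourier decomposition \eqref{direct}, which splits $\LLL$ as the orthogonal direct sum $\LLL^{(1)}\oplus\tilde\LLL$ (isolating the $n=1$ block from the rest). Consequently $\sigma(\LLL)=\sigma(\LLL^{(1)})\cup\sigma(\tilde\LLL)$. By Lemma~\ref{step2}, $\inf\sigma(\tilde\LLL)\ge m_0(\lambda)>0$, so the second summand contributes only strictly positive eigenvalues. Again, the nonpositivity of $\mu_k(\lambda)$ rules this out, leaving $\mu_k(\lambda)\in\sigma(\LLL^{(1)})$.

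There is no genuine obstacle here: the work has all been done in Lemmas~\ref{step1} and \ref{step2}, and the corollary is merely their conjunction. The only minor point worth spelling out is that the direct-sum decomposition of $\LLL$ is orthogonal in $L^2(\ddd;\CC)$ (equivalently, the quadratic form $\tilde\QQQ$ and $\QQQ^{(1)}$ act on complementary invariant subspaces), so that the spectrum of $\LLL$ really is the union of the spectra of the two summands — which is immediate from the Fourier expansion used to derive \eqref{direct}.
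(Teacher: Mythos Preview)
Your proposal is correct and follows exactly the route the paper intends: the corollary is stated immediately after Lemmas~\ref{step1} and~\ref{step2} with the remark ``Combining the results of Lemmas~\ref{step1} and~\ref{step2}, we may conclude,'' and your argument spells out precisely that combination. There is nothing to add.
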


In particular, if $\mu_1(\lambda)$ is to cross zero it must be because of the ground state eigenvalue $\mu_1^{(1)}(\lambda)$ of $\LLL^{(1)}$.  We perform one final reduction of the operator $\LLL^{(1)}$:
  We define a quadratic form $Q^{(1)}_\la$ on real-valued radial functions $(a_0,a_2)$ by
$$   Q^{(1)}_\la(a_0,a_2):= 
\pi\int_0^1 \left[ (a'_0)^2 + (a'_2)^2 + {4\over r^2}a_2^2
 + \la (f_\la^2-1)\left(a_0^2+a_2^2\right) 
   + \beta\lambda f_\la^2 \left(a_0 - a_2\right)^2\right] r\, dr.
$$
The  self-adjoint operator associated to $Q^{(1)}_\la$ is
\begin{equation}\label{Msys}
\mathcal{M}_\la\left[ \begin{matrix} a_0 \\ a_2 \end{matrix}\right]
=
\left[ \begin{gathered}
-a''_0 - {1\over r}a'_0 + \la (f_\la^2-1) a_0 + \beta\la f_\la^2 (a_0-a_2)
\\
-a''_2 - {1\over r}a'_2 + {4\over r^2}a_2 + \la (f_\la^2-1) a_2 - \beta\la f_\la^2 (a_0-a_2)
\end{gathered}\right].
\end{equation}

\begin{lem}\label{step3}
$\mu\in\RR$ is an eigenvalue of $\LLL^{(1)}$ over $L^2(([0,1]; r\, dr) ; \CC^2)$ if and only if it is an eigenvalue of $\mathcal{M}_\la$ over $L^2(([0,1]; r\, dr) ; \RR^2)$.  Moreover, if $\mu$ is a \underbar{simple} eigenvalue of $\mathcal{M}_\la$ with eigenspace spanned by $(a_0,a_2)$, then 
$$  \ker(\LLL^{(1)}-\mu I) =\{ t\, (\bar\xi a_0 \, , \, -\xi a_2): \ \xi\in \so, \ t\in\RR\}.  $$
\end{lem}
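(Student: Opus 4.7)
The plan is to split $\LLL^{(1)}$ into its action on the real and imaginary parts of the complex pair $(b_2,b_0)$ and identify each piece with a copy of $\mathcal{M}_\la$. Writing $b_0=x_0+iy_0$ and $b_2=x_2+iy_2$ with real-valued $x_j,y_j$, the key identity
$$|b_2+\overline{b_0}|^2=(x_0+x_2)^2+(y_0-y_2)^2$$
decouples the coupling term in $\QQQ^{(1)}$ across real and imaginary parts (the remaining terms are already diagonal in $(x_j,y_j)$). Thus
$$\QQQ^{(1)}(b_2,b_0)=A(x_0,x_2)+B(y_0,y_2)$$
for two real quadratic forms $A,B$. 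Inspection gives $B(a_0,a_2)=Q^{(1)}_\la(a_0,a_2)$ directly, while the sign flip $a_2\mapsto -a_2$ (which converts $(a_0-a_2)^2$ into $(x_0+x_2)^2$ and leaves the $4/r^2$ and kinetic terms unchanged) yields $A(x_0,x_2)=Q^{(1)}_\la(x_0,-x_2)$. This exhibits $\LLL^{(1)}$ as the direct sum $\mathcal{M}_\la\oplus\mathcal{M}_\la$ under a real-orthogonal change of coordinates, and so $\sigma(\LLL^{(1)})=\sigma(\mathcal{M}_\la)$ with every eigenvalue doubled in multiplicity. This proves the equivalence of spectra.

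For the refined statement, suppose $\mu$ is a simple eigenvalue of $\mathcal{M}_\la$ with real eigenvector $(a_0,a_2)$. By the decomposition above, the $\mu$-eigenspace of $\LLL^{(1)}$ has real dimension two and is spanned (in $(b_2,b_0)$-ordering) by
$$v_1:=(-a_2,a_0)\quad\text{(from the $A$-copy via $(x_0,x_2)=(a_0,-a_2)$)}$$
and
$$v_2:=(ia_2,ia_0)\quad\text{(from the $B$-copy via $(y_0,y_2)=(a_0,a_2)$).}$$
A general real combination reads
$$sv_1+tv_2=\bigl((-s+it)a_2,\,(s+it)a_0\bigr)=(-\bar\eta a_2,\,\eta a_0),\qquad \eta:=s+it\in\CC.$$
Setting $\eta=t\bar\xi$ with $t\in\RR$ and $\xi\in\so$, this becomes $t(-\xi a_2,\bar\xi a_0)$ in $(b_2,b_0)$-ordering, i.e.\ $t(\bar\xi a_0,-\xi a_2)$ once the pair is listed in the natural $(a_0,a_2)$-order of the $\mathcal{M}_\la$-eigenvector, matching the claim exactly. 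Equivalently, this set is the orbit of the single real eigenvector $v_1$ under the $\so$-action $R_\xi:(b_2,b_0)\mapsto(\xi b_2,\bar\xi b_0)$ combined with real scaling, which is consistent with $\LLL^{(1)}$ commuting with $R_\xi$.

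I do not expect any serious obstacle; the argument is essentially linear algebra once one notices the sign convention $a_2\mapsto -a_2$ that identifies $A$ with $Q^{(1)}_\la$. The only bookkeeping point to check is that the map $(t,\xi)\mapsto t\bar\xi$ is surjective from $\RR\times\so$ onto $\CC$, which ensures the $\so\times\RR$ orbit of a single real eigenvector sweeps out the entire two-dimensional real eigenspace with no gaps.
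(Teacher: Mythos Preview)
Your proof is correct and takes essentially the same approach as the paper: both split the complex pair $(b_0,b_2)$ into real and imaginary parts and identify each with a copy of $\mathcal{M}_\la$ (one directly, one via the sign flip $a_2\mapsto -a_2$). Your presentation via the quadratic form, making the direct sum $\mathcal{M}_\la\oplus\mathcal{M}_\la$ explicit, is slightly cleaner than the paper's, which works at the level of the eigenvalue equations by taking $(\Im b_0,\Im b_2)$ and $(\Re b_0,-\Re b_2)$ separately, but the content is the same.
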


\begin{proof} 
Let $\mu\in\sigma(\LLL^{(1)})$ with (complex-valued) eigenvector $(b_0,b_2)$, that is:
\begin{equation}\label{Lsys}
\LLL^{(1)}\left[ \begin{matrix} b_0 \\ b_2 \end{matrix}\right]
=
\left[ \begin{gathered}
-b''_0 - {1\over r}b'_0 + \la (f_\la^2-1) b_0 + \beta\la f_\la^2 (b_0-\overline{b_2})
\\
-b''_2 - {1\over r}b'_2 + {4\over r^2}b_2 + \la (f_\la^2-1) b_2 + \beta\la f_\la^2 (b_2 - \overline{b_0})
\end{gathered}\right]
=\mu \left[ \begin{matrix} b_0 \\ b_2 \end{matrix}\right].
\end{equation}
We observe that $a_0=\Im b_0$, $a_2=\Im b_2$ will be eigenvectors of $\mathcal{M}_\la$ with the same eigenvalue $\mu$.
On the other hand, it is clear that if $(a_0,a_2)$ are (real-valued) eigenvectors of $\mathcal{M}_\la$, then $(b_0,b_2)=(ia_0,ia_2)$ will be eigenvectors of $\LLL^{(1)}$ with the same eigenvalue.  Thus,
$\sigma(\LLL^{(1)})=\sigma(\mathcal{M}_\la)$.

Finally, suppose $\mu$ is a simple eigenvalue of $\mathcal{M}_\la$ with eigenspace spanned by $(a_0,a_2)$.  If $(b_0,b_2)$ is an eigenfunction of $\LLL^{(1)}$, then (by the observation above) $(\Im b_0, \Im b_2)= -q(a_0,a_2)$ for $q\in\RR$.  Similarly, $(\Re b_0, -\Re b_2)$ is an eigenfunction of  $\mathcal{M}_\la$, and so $(\Re b_0, -\Re b_2)= p(a_0, a_2)$ for $p\in\RR$.  Setting
$t=\sqrt{p^2+q^2}$ and $\displaystyle\xi={p+iq\over t}\in \so$, 
we have 
$(b_0, b_2) = t\left(\bar\xi\, a_0 \, , \, -\xi\, a_2\right)$, as claimed.
\end{proof}

We conclude this part with the following essential fact about the ground state eigenvalue of $\mathcal{M}_\la$:

\begin{lem}\label{step4}
The ground state eigenvalue $\mu^{(1)}_1(\la)$ of $\mathcal{M}_\la$ 
is simple.  It is generated by $(a_0(r;\la), a_2(r;\la))$ with $0\le a_2(r;\la)\le a_0(r;\la)$, $a_0(r;\la), a_2(r;\la)>0$ in $(0,1)$, $a_0(0;
\la)>0$, and $a_2(r;\la)=O(r^2)$ for $r\to 0$, for all $\la>0$.
\end{lem}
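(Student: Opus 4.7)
The plan is to construct a ground state by direct minimization, extract its sign, ordering, and regularity properties via two successive symmetrizations and the strong maximum principle, and then establish simplicity by a scaling argument in the spirit of Perron--Frobenius.

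Existence is routine. On the natural weighted Sobolev space where $a_0'\in L^2((0,1);r\,dr)$ with $a_0(1)=0$, and $a_2',a_2/r\in L^2((0,1);r\,dr)$ with $a_2(1)=0$, the form $Q^{(1)}_\la$ is coercive modulo a shift since $\la(f_\la^2-1)\ge -\la$, and the embedding into $L^2((0,1);r\,dr;\RR^2)$ is compact (it is the radial part of the standard compact embedding $H^1_0(\ddd)\hookrightarrow L^2(\ddd)$). Thus the Rayleigh quotient attains its minimum $\mu^{(1)}_1(\la)$ at some real pair $(a_0,a_2)$. I then apply two symmetrizations, each preserving the $L^2$ norm. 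Passing from $(a_0,a_2)$ to $(|a_0|,|a_2|)$ leaves every term of $Q^{(1)}_\la$ invariant except $\beta\la f_\la^2(a_0-a_2)^2$, which can only decrease since $||a_0|-|a_2||\le|a_0-a_2|$, with equality iff $a_0\, a_2\ge 0$ a.e. Passing next to $(\max(|a_0|,|a_2|),\min(|a_0|,|a_2|))$ leaves every term invariant except $\frac{4}{r^2}a_2^2$, which can only decrease, with equality iff $|a_0|\ge |a_2|$ a.e. As both rearrangements produce ground states, equality must hold, and, after a possible global sign change, $0\le a_2\le a_0$ pointwise. Positivity and the behaviour at the origin then follow from elliptic regularity: reading the first equation of \eqref{Msys} as $-\Delta a_0+c(r)a_0=\beta\la f_\la^2 a_2\ge 0$, with $a_0$ viewed as a radial function on $\ddd$, the strong maximum principle yields $a_0\equiv 0$ or $a_0>0$ on the open disk. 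The former alternative forces $a_2\equiv 0$ through the same equation (since $f_\la>0$ on $(0,1)$), contradicting nontriviality, so $a_0>0$ on $[0,1)$, in particular $a_0(0)>0$. The analogous argument on the second equation gives $a_2>0$ on $(0,1)$, and the indicial analysis at $r=0$ (exponents $\pm 2$), equivalently the smoothness of the underlying 2D function with angular dependence $e^{\pm 2i\theta}$, yields $a_2(r)=O(r^2)$ as $r\to 0$.

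Simplicity is the main obstacle, since a direct scalar Perron--Frobenius argument does not apply to this coupled system. Suppose the $\mu^{(1)}_1(\la)$-eigenspace were at least two-dimensional. By the preceding steps, every nontrivial ground state, up to sign, has strictly positive components on its natural open domain and satisfies $a_0\ge a_2$. Pick two linearly independent positive eigenvectors $\phi_1=(a_0,a_2)$ and $\phi_2=(b_0,b_2)$. The ratios $a_0/b_0$ and $a_2/b_2$ extend continuously to strictly positive functions on $[0,1]$: at $r=0$ because $a_0(0),b_0(0)>0$ and $a_2,b_2\sim c_\pm r^2$; at $r=1$ via l'H\^opital together with the Hopf lemma, which ensures nonvanishing radial derivatives there. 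Set $t^*:=\min\bigl(\min_{[0,1]}a_0/b_0,\,\min_{[0,1]}a_2/b_2\bigr)>0$, attained at some $r_0\in[0,1]$. The combination $(c_0,c_2):=(a_0-t^*b_0,\,a_2-t^*b_2)$ is a nonnegative ground state with at least one component vanishing at $r_0$; say $c_0(r_0)=0$. The strong maximum principle (when $r_0\in[0,1)$) or the Hopf lemma (when $r_0=1$, where the attained value $t^*=a_0'(1)/b_0'(1)$ forces $c_0'(1)=0$) then forces $c_0\equiv 0$, and then the first equation of \eqref{Msys} forces $\beta\la f_\la^2 c_2\equiv 0$, hence $c_2\equiv 0$ on $(0,1)$. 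This gives $\phi_1=t^*\phi_2$, contradicting linear independence. The delicate point throughout this last step is to combine the strong maximum principle and the Hopf lemma uniformly for both components of the system as $r_0$ ranges over the closed interval $[0,1]$.
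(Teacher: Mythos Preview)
Your proof is correct and follows essentially the same approach as the paper: both use the $\max/\min$ rearrangement of $(|a_0|,|a_2|)$ to force $0\le a_2\le a_0$ for any ground state, then appeal to the strong maximum principle for strict positivity and to the ensuing positivity for simplicity. The paper compresses the simplicity step into the single phrase ``by a standard argument,'' whereas you spell out the Perron--Frobenius scaling with $t^*$; this is welcome added detail rather than a different method (one small point to watch is the case where the infimum is attained by the $a_2/b_2$ ratio at $r_0=0$, which is cleanly handled by the substitution $h=c_2/r^2$, turning the operator into a radial Laplacian in dimension six so that $r=0$ becomes an interior point).
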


\begin{proof}
This fact follows as in \cite{M1}:  we claim that, up to a change of sign,  $0\le a_2(r)\le a_0(r)$ holds for all $r$.  Indeed, if not we define
$\tilde a_2(r)=\min\{|a_0(r)|,|a_2(r)|\}$, and similarly $\tilde a_0(r)=\max\{|a_0(r)|,|a_2(r)|\}$.  Replacing $a_0$, $a_2$ respectively by $\tilde a_0$, $\tilde a_2$ does not change  the quantity 
$\displaystyle \int_0^1 [|a_0(r)|^2 + |a_2(r)|^2]\, r\, dr$, 
and the first, second, and fourth terms in $Q^{(1)}_{\la}$ are unchanged. However, the third and last terms are reduced, contradicting the minimality of the Rayleigh quotient at $(a_0,a_2)$.  By a standard argument, nonnegativity of the eigenfunctions and of $a_0-a_2$ implies simplicity of the eigenvalue.  Each function is strictly positive in $(0,1)$ by the strong maximum principle (or the uniqueness theorem for ordinary differential equations).

The behavior of $a_0,a_2$ at $r=0$ follows from the ordinary differential equations (see (16) of \cite{M1}) satisfied by $a_0, a_2$ in $(0,1)$.
\end{proof}

\subsection*{Analyticity}

In this part, we prove that the radial profile $f_\la$ is analytic in both $\la$ and $r$, and conclude real analytic dependence on $\la$ of the simple eigenvalues of $L_\la$.

\begin{prop}
\label{fanalytic}
$f_\lambda(r)$ is real-analytic in $r\in [0,1]$ and $\lambda>0$.
\end{prop}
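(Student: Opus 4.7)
The plan is to combine the analytic implicit function theorem (for analyticity in $\la$) with classical analytic regularity for elliptic PDEs (for analyticity in the spatial variable), and then restrict to the positive real axis. First, I would recall that $u_\la=f_\la(r)e^{i\theta}$ is the unique minimizer of $G_\la$ in $H^1_{e^{i\theta}}(\ddd;\CC)$, and hence solves $-\Delta u = \la(1-|u|^2)u$ in $\ddd$ with $u|_{\partial\ddd}=e^{i\theta}$. Consider the map
\[
F:(H^2\cap H^1_{e^{i\theta}})(\ddd;\CC)\times(0,\infty)\to L^2(\ddd;\CC),\qquad
F(u,\la)=-\Delta u-\la(1-|u|^2)u,
\]
which is real-analytic because it is polynomial in $(u,\la)$. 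We have $F(u_\la,\la)=0$, and the Fr\'echet derivative $D_uF(u_\la,\la)=\LGL$ is positive definite (by Mironescu~\cite{M1}), hence an isomorphism $H^2\cap H^1_0\to L^2$. The analytic implicit function theorem then yields that $\la\mapsto u_\la$ is real-analytic as a map $(0,\infty)\to H^2(\ddd;\CC)$.

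Next, I would upgrade this to joint analyticity in $(x,\la)$. The nonlinearity is polynomial in $u$ and affine in $\la$, the domain $\ddd$ is analytic, and the Dirichlet datum $e^{i\theta}$ is analytic on $\partial\ddd$; thus by the classical Morrey--Nirenberg analytic regularity theorem up to the boundary, applied in the version for elliptic problems depending analytically on a parameter, $u_\la(x)$ is jointly real-analytic in $(x,\la)\in\overline{\ddd}\times(0,\infty)$. Evaluating on the positive real axis, $f_\la(r)=u_\la(r,0)$ is jointly real-analytic in $(r,\la)\in[0,1]\times(0,\infty)$. (Real-analyticity at $r=0$ is not an issue because $u_\la$ is analytic as a function of the Cartesian coordinates on the whole closed disk; the $r=0$ behavior $f_\la(r)\sim cr$ is just the Taylor expansion of $u_\la(x,0)$ about $x=0$.)

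The main obstacle is the passage to joint analyticity in $(x,\la)$: the implicit function theorem produces analytic dependence on $\la$ only with values in a Banach space of functions, while for each fixed $\la$ one has analyticity in $x$. One must invoke the parametric version of interior and boundary analytic regularity (alternatively, complexify and use Hartogs). A purely ODE-based alternative would substitute $f(r)=r\phi(r^2)$ to transform the radial equation into the regular ODE $4s\phi''+8\phi'+\la(1-s\phi^2)\phi=0$ on $[0,1]$ with analytic parameter $\la$, and then apply analytic dependence theorems for ODEs at a regular singular point, with the shooting parameter $\phi(0)$ being determined analytically in $\la$ via nondegeneracy coming from $\LGL>0$.
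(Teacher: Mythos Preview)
Your approach is correct but takes a genuinely different route from the paper. The paper works entirely at the ODE level: it rescales to $F(r,R)=f_\la(r/\sqrt{\la})$ on $(0,R)$ with $R=\sqrt{\la}$, recasts the radial equation as a Cauchy-type problem $\phi(0,b)=0$, $\phi'(0,b)=b$, and obtains analyticity of $\phi$ in $(r,b)$ by Picard iteration on an explicit integral equation (handling the regular singular point at $r=0$ by hand). Then $F(r,R)=\phi(r,b_R)$, and the shooting parameter $b_R$ is shown to be analytic in $R$ via the scalar implicit function theorem, using the ODE fact $\partial_b\phi(R,b_R)>0$. Your PDE strategy---analytic implicit function theorem in $H^2$ via invertibility of $\LGL$, followed by parametric analytic elliptic regularity (or complexification plus Hartogs)---is more conceptual and, granted the cited theorems, shorter. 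The paper's ODE method has two advantages in context. First, it is self-contained and sidesteps the passage to \emph{joint} analyticity in $(x,\la)$ that you rightly flag as the delicate step; the parametric Morrey--Nirenberg statement you need is true but not entirely off-the-shelf. Second, and more importantly for the paper's architecture, the intermediate ODE lemmas (positivity of $\partial_b\phi$, the strict inequality $\partial_R b_R<0$, the pointwise lower bound $F(r,R)\ge C_1 r$) are not scaffolding to be discarded: they are reused verbatim in the proof of Proposition~\ref{minmax} on the strict monotonicity of the eigenvalues $\hat\mu_n(R)$. Your argument establishes Proposition~\ref{fanalytic} but would leave those later steps without the fine structure they rely on.
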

As in the proof of Lemma~\ref{step2}, we rescale our problem to study a fixed equation in a variable domain.  We define 
$$F(r,R)=f_\la(r/\sqrt{\la}),$$
 for  $r\in (0,R)$, with $R=\sqrt{\lambda}$.  Then, $F$ solves
\begin{equation}\label{bvp}
   -F''-\frac1r F' + {1\over r^2} F = (1-F^2)F, \quad F(0)=0, \ F(R)=1.  
\end{equation}
While the existence of such an $F(r)$ may easily be done by minimization of its energy functional, to obtain the desired properties of $F$ it will be necessary to relate the solution $F(r,R)$ of the boundary value problem to solutions $\phi(r,b)$ of a Cauchy-type problem,
\begin{equation}\label{cauchy}
-\phi_{rr} - \frac1r \phi_r + {1\over r^2} \phi = (1-\phi^2)\phi,  \qquad
\phi(0,b)=0, \ \phi'(0,b)=b,
\end{equation}
where the value of $b$ is chosen (by ``shooting'') to achieve the boundary condition $\phi(R,b)=1$.  Indeed, it has been shown (see \cite{CEQ}) that $F(r,R)=\phi(r,b_R)$ for a unique value of $b=b_R>0$.
We note that the equation being singular at $r=0$, this is not a regular initial-value problem, and thus the existence and analyticity of the solution do not follow directly from the Picard existence theorem.  (See Theorems~ 8.1 and 8.3 of \cite{CL}.)  

In the remainder of this section it will be convenient to extend $\phi(r,b)$ to $r\in\CC$, with complex parameter $b\in\CC$.  
The following equivalence follows easily from the variation of parameters formula, and may be established by direct calculation.

\medskip

\begin{lem}\label{varpar} Let $g(r)$ be continuous for $|r|\le r_0$, $r_0>0$.  If $f$ is continuous on $|r|\le r_0$ and solves
\be\label{IE}   f(r) = b r + \frac12 \int_0^r \left({s\over r}-{r\over s}\right) s\,g(s)\, ds,  
\ee
then $f\in C^2(\{r\in\CC: \ 0<|r|<r_0\})$ and solves
\be\label{DE}  -f_{rr} - \frac1r f_r + {1\over r^2} f = g(r), \quad \text{for} \ r\in (0,r_0), \qquad
f(0,b)=0, \ f'(0,b)=b.
\ee
Conversely, if $f\in C^1(\mathbb{D}_{r_0})\cup C^2(\mathbb{D}_{r_0}\setminus\{0\})$ is a solution of \eqref{DE}, then it also solves \eqref{IE}.  
Moreover, if $g$ is analytic in $\mathbb{D}_{r_0}$, then $f$ will be analytic for $r\in \mathbb{D}_{r_0}$ and $b\in\CC$.
\end{lem}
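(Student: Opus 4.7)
My plan is to treat the two implications separately and then establish analyticity by a direct power series computation.

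For the first direction (integral $\Rightarrow$ differential), I would start by splitting the integral kernel:
\begin{equation*}
f(r) = br + \frac{1}{2r}\int_0^r s^2 g(s)\,ds - \frac{r}{2}\int_0^r g(s)\,ds.
\end{equation*}
Since $g$ is continuous on $|r|\le r_0$, each piece is differentiable for $r\neq 0$; a direct computation gives
\begin{equation*}
f'(r) = b - \frac{1}{2r^2}\int_0^r s^2 g(s)\,ds - \frac{1}{2}\int_0^r g(s)\,ds,
\end{equation*}
and differentiating once more produces $-f''-f'/r+f/r^2 = g(r)$, as the nonlocal terms cancel. The estimates $|I(r)| \le Cr^2\,\|g\|_\infty$ and $|I'(r)|\le Cr\,\|g\|_\infty$ give $f(0)=0$ and $f'(0)=b$.

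For the converse, suppose $f\in C^1(\mathbb{D}_{r_0})\cap C^2(\mathbb{D}_{r_0}\setminus\{0\})$ solves the ODE with $f(0)=0$, $f'(0)=b$. Define $h(r) = f(r) - br - I(r)$, where $I(r)$ is the integral in \eqref{IE}. By the first part, $h$ satisfies the homogeneous equation $-h'' - h'/r + h/r^2 = 0$ on $(0,r_0)$, whose two-dimensional kernel is spanned by $r$ and $1/r$; so $h(r) = Ar + B/r$ on $(0,r_0)$. The $C^1$-regularity of $f$ at the origin (together with $f(0)=0$, which forces $I(r)\to 0$ and hence $B/r$ to remain bounded) rules out the $1/r$ mode, giving $B=0$. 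Comparing first derivatives at $r=0$, using $f'(0)=b$ and $I'(0)=0$ from the estimate above, forces $A=0$, so $h\equiv 0$ and \eqref{IE} holds.

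For analyticity, expand $g(s)=\sum_{k=0}^\infty c_k s^k$ on $\mathbb{D}_{r_0}$ and substitute into the integral:
\begin{equation*}
I(r) = \sum_{k=0}^\infty c_k\left[\frac{1}{2(k+3)}r^{k+2} - \frac{1}{2(k+1)}r^{k+2}\right] = -\sum_{k=0}^\infty \frac{c_k}{(k+1)(k+3)}\,r^{k+2}.
\end{equation*}
The factors $1/((k+1)(k+3))$ do not shrink the radius of convergence, so this power series converges on $\mathbb{D}_{r_0}$; thus $f(r,b)=br+I(r)$ is holomorphic in $r\in\mathbb{D}_{r_0}$, and trivially entire in $b\in\CC$, giving joint analyticity.

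The main subtlety is the reverse direction, where the singular coefficient $1/r^2$ in the ODE means one cannot appeal directly to Picard's theorem; the argument instead leans on identifying the two-dimensional homogeneous kernel and using the hypothesized regularity and initial data at $r=0$ to eliminate both modes. The analyticity step itself is routine once the closed-form power series is written down.
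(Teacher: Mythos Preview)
Your proposal is correct and is precisely the ``direct calculation'' via variation of parameters that the paper alludes to but does not write out; the paper gives no detailed proof beyond that remark, so your argument supplies exactly the intended verification, including the identification of the Euler kernel $\{r,\,1/r\}$ and the power-series expansion for analyticity.
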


The integral in \eqref{IE} is a complex path integral, and the path is the straight-line segment joining $0$ to $r$ in $\CC$.  Note that in the analytic case, the singularity of the solution of \eqref{DE} is removable, and thus $f$ is analytic in the entire disk $\mathbb{D}_{r_0}$.

Using Lemma~\ref{varpar} we may obtain an analytic solution to the Cauchy-like problem \eqref{cauchy} by Picard iteration of the integral equation,
\begin{equation}\label{inteq}  \phi(r,b) = 
  b r + \frac12 \int_0^r \left({s\over r}-{r\over s}\right) s\,
   (1-\phi(s,b)^2)\phi(s,b)\, ds.  
\end{equation}
Let $b_0\in\CC$ be fixed, and define a rectangle in $\CC\times\CC$,
$$  \mathcal{R}:=\left\{ (r,b) : \  |r|\le \rho, \ |b-b_0|\le\rho\right\}\quad
\text{with $\rho:=\min\left\{\frac12, {1\over 2|b_0|}\right\}$.} $$

\begin{lem}\label{picard}  For any fixed $b_0\in\CC$, there exists a unique solution to \eqref{cauchy}, which is analytic for $(r,b)\in\mathcal{R}$.  
\end{lem}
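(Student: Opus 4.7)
The plan is to prove existence and uniqueness by a standard Picard/contraction argument applied to the integral equation \eqref{inteq}, and then deduce analyticity from the fact that the iterates are all polynomial in $r$ and $b$. By Lemma~\ref{varpar}, a $C^1$ solution of \eqref{cauchy} on $|r|<\rho$ coincides with a continuous solution of \eqref{inteq}, so it suffices to work with the integral equation.

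First, I would parametrize the straight-line path from $0$ to $r$ by $s=tr$, $t\in[0,1]$, which turns the kernel into an explicit polynomial:
\[
 \frac12\int_0^r\left(\frac{s}{r}-\frac{r}{s}\right) s\, h(s)\,ds
  = \frac{r^2}{2}\int_0^1 (t^2-1)\, h(tr)\, dt.
\]
With this identity, define the operator $T$ on the Banach space $X$ of functions $\phi\in C(\mathcal R;\CC)$ that are analytic in the interior of $\mathcal R$, equipped with the sup norm, by
\[
(T\phi)(r,b) := br + \frac{r^2}{2}\int_0^1 (t^2-1)\bigl(1-\phi(tr,b)^2\bigr)\phi(tr,b)\, dt.
\]
Since $T$ sends analytic functions to analytic functions (the parameter integral of a jointly analytic integrand), $T:X\to X$ is well-defined. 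Note $(T\phi_0)(r,b)=br$ for $\phi_0\equiv 0$.

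Next I would verify that $T$ is a contraction on a suitable closed ball $B_M\subset X$ of functions with $\|\phi\|_\infty\le M$, for $M$ chosen somewhat larger than $\sup_{\mathcal R}|br|\le |b_0|\rho+\rho^2\le 1$ (using $\rho\le\tfrac12$ and $\rho|b_0|\le\tfrac12$). The key estimates are the pointwise bound
\[
\bigl|T\phi(r,b)-br\bigr|\le \tfrac12 |r|^2\, M(1+M^2)\cdot \tfrac23,
\]
and, using $|(1-\phi^2)\phi-(1-\psi^2)\psi|\le(1+3M^2)|\phi-\psi|$ on $B_M$, the Lipschitz bound
\[
\|T\phi-T\psi\|_\infty \le \tfrac13 \rho^2\,(1+3M^2)\,\|\phi-\psi\|_\infty.
\]
Choosing $M$ large enough that $T B_M\subset B_M$ and then, if necessary, shrinking $\rho$ to make $\tfrac13\rho^2(1+3M^2)<1$ (which is compatible with the stated $\rho=\min\{1/2,1/(2|b_0|)\}$ after possibly replacing it by a smaller value---here one has some flexibility), the Banach fixed point theorem yields a unique fixed point $\phi\in B_M$. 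Since the Picard iterates are jointly analytic in $(r,b)$ and converge uniformly, the limit $\phi$ is analytic on the interior of $\mathcal R$.

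Finally, the uniqueness claim needs a brief supplement beyond the contraction argument, since the latter only gives uniqueness within $B_M$. But any analytic solution $\tilde\phi$ of \eqref{cauchy} on $\mathcal R$ satisfies $\tilde\phi(0,b)=0$, hence is bounded on a slightly smaller rectangle; applying the contraction argument on that smaller rectangle with $M$ chosen to dominate $\|\tilde\phi\|_\infty$ forces $\tilde\phi=\phi$ there, and analyticity then extends the identity to all of $\mathcal R$. The only real obstacle is bookkeeping the constants so that both the self-mapping and contraction properties hold simultaneously with the stated $\rho$; this is routine once the factor $r^2$ in the kernel is made visible by the substitution $s=tr$.
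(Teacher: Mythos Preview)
Your approach is correct and is essentially the same as the paper's: both run a Banach contraction on the integral equation \eqref{inteq}, then deduce analyticity from uniform convergence of analytic iterates. The only tactical difference is that the paper first substitutes $h(r,b)=\phi(r,b)/r$ and contracts on the ball $\{h:|h-b_0|\le 1\}$, which gives $|rh|\le 1$ immediately and makes the constants fall out with no bookkeeping (contraction factor $\rho^2\le\tfrac14$); your substitution $s=tr$ achieves the same extraction of the $r^2$ factor while working with $\phi$ directly. One remark: your hedge about ``possibly replacing $\rho$ by a smaller value'' would not prove the lemma as stated, and it is in fact unnecessary---taking $M=1$ in your scheme, the stated $\rho$ gives $\|T\phi\|_\infty\le \tfrac34+\tfrac{2\rho^2}{3}\le\tfrac{11}{12}$ and Lipschitz constant $\tfrac{\rho^2}{3}(1+3)\le\tfrac13$, so both self-mapping and contraction hold without modification.
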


\begin{proof}
Let $h(r,b)=\phi(r,b)/r$.  Then, $f$ solves \eqref{inteq} if and only if $h$ solves the fixed-point equation,
$$  h(r,b) = T[h](r,b):=
  b  + \frac12 \int_0^r \left({s^2\over r^2}-1\right)
   s(1-s^2h(s,b)^2)h(s,b)\, ds.  
$$
Define a class of continuous functions,
$$  M:=\left\{ h\in C(\mathbb{D}_\rho;\CC): \ |h(r)-b_0|\le 1 \ \forall r\in \mathbb{D}_\rho\right\}.  $$
We observe that if $h\in M$, then for any $r\in \mathbb{B}_\rho,$
\be\label{rh}
|rh(r)| \le |r(h-b_0)| + |rb_0| \le \frac12 + \frac12 =1.
\ee
Thus, $|1-s^2h^2(s)|\le 2$ for $0<|s|<|r|\le\rho$, and 
$$   \left| T[h](r) - b_0\right| \le |b-b_0|+
  \frac12 \int_0^{|r|} |1-s^2h^2(s)| \, |sh(s)|\, d|s| \le \frac12+|r|\le 1, 
$$
and hence $T: \ M\to M$.  If $h_1,h_2\in M$, then
\begin{align*}
|T[h_2](r)-T[h_1](r)| &\le
   \frac12 \int_0^{|r|} \left[ 1 + |s|^2( |h_1|^2 + |h_1h_2| + |h_2|^2)\right] |s|\, |h_2(s)-h_1(s)|\, d|s| \\
   &\le  2 \|h_2-h_1\|_\infty \int_0^{|r|} |s|\, d|s| \\
   &= |r|^2 \,  \|h_2-h_1\|_\infty \le \frac14 \|h_2-h_1\|_\infty,
\end{align*}
for any $r\in \mathbb{D}_\rho$, where we have used \eqref{rh} to estimate the integrand.  Thus, $T$ is a contraction on $M$, and there exists a unique continuous solution of the fixed point equation for $r\in \mathbb{D}_\rho$.  Since the solution may be characterized as the uniform limit of the iterates $h_n:= T[h_{n-1}]$, $h_0=b$, and by induction each $h_n$ is analytic for $(r,b)\in \mathcal{R}$, the solution $h$ so obtained is analytic for $(r,b)\in\mathcal{R}$.  Setting $\phi(r,b)=rh(r,b)$, we obtain a unique analytic solution to \eqref{inteq}, which by Lemma~\ref{varpar} provides a unique analytic solution of \eqref{cauchy} for $(r,b)\in \mathcal{R}$.  
\end{proof}

For $r\neq 0$, the initial value problem for the differential equation \eqref{cauchy} is regular, and the existence, uniqueness, and analytic dependence of solutions in the complex plane follow from Theorem~8.3 of \cite{CL}.  Using this observation, we may extend the solution $\phi(r,b)$ analytically along the real axis in $r$ to obtain a maximally defined analytic solution.  Indeed, for any $b$, $|b-b_0|<\rho$, define $R_b$ to be the supremum of all real values $R>0$ for which there exists an analytic solution $\phi(r,b)$ of \eqref{cauchy} in a $\CC$-neighborhood of the real interval $[0,R]$.  By Lemma~\ref{picard}, we know that $R_b>\rho$.

\begin{prop}\label{extension}  For any $b\in\RR$ there exists a unique solution $\phi(r,b)$ to \eqref{cauchy} for $r\in [0,R_b)$, which is real analytic in $(r,b)$.  
Either $R_b=\infty$, or $R_b<\infty$ and $\lim_{r\to R_b^-} |\phi(r,b)|=\infty$.
\end{prop}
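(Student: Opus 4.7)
The plan is to combine the local analytic solution near $r=0$ provided by Lemma~\ref{picard} with standard analytic ODE theory on the regular region $r>0$, and then to establish the blow-up alternative by an elementary continuation argument.

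Fix $b_0\in\RR$. By Lemma~\ref{picard} there is a unique solution $\phi(r,b)$ of \eqref{cauchy} which is jointly real-analytic in $(r,b)$ on the rectangle $\mathcal R$. For any $r_0\in(0,\rho)$, the values $(\phi(r_0,b),\phi_r(r_0,b))$ are then real-analytic in $b$ near $b_0$. Since \eqref{cauchy} has real-analytic coefficients on $\{r>0\}$, Theorem~8.3 of \cite{CL} yields a unique solution of the regular initial-value problem at $r=r_0$ which depends analytically on $r$ and on the initial data. Patching this solution to the local one from Lemma~\ref{picard} produces a unique extension of $\phi(\cdot,b_0)$ along the positive real axis, jointly real-analytic in $(r,b)$ in a complex neighborhood of $[0,R_b)\times\{b_0\}$, where $R_b$ is defined as in the statement. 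Uniqueness along $[0,R_b)$ follows from Lemma~\ref{picard} near $r=0$ combined with standard uniqueness for the regular ODE on $r>0$.

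For the blow-up alternative, assume $R_b<\infty$ and argue by contradiction: suppose $|\phi(r,b_0)|\le M$ for all $r\in[0,R_b)$. Writing $\psi=\phi'$, the second-order equation reads $\psi'+r^{-1}\psi=r^{-2}\phi-(1-\phi^2)\phi$, equivalently $(r\psi)'=r^{-1}\phi-r(1-\phi^2)\phi$. Integrating from $r_0=\rho/2$ to $r\in[r_0,R_b)$ and using $|\phi|\le M$ and $s\le R_b$ gives
\begin{equation*}
|r\psi(r)|\le r_0\,|\psi(r_0)|+\int_{r_0}^{R_b}\Bigl[\frac{M}{s}+sM(1+M^2)\Bigr]ds=:C_1(M,R_b,r_0),
\end{equation*}
so that $|\psi(r)|\le C_1/r_0$ on $[r_0,R_b)$. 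Therefore $(\phi(\cdot,b_0),\psi(\cdot,b_0))$ remains in a bounded set as $r\to R_b^-$ and extends continuously to $r=R_b$. Applying Theorem~8.3 of \cite{CL} at the regular point $(R_b,b_0)$ to the associated first-order system then yields a solution on a strictly larger interval, contradicting the maximality of $R_b$.

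The main technical concern is the bookkeeping required to ensure joint analyticity in $(r,b)$ across the junction between the local piece at $r=0$ supplied by Lemma~\ref{picard} and the Cauchy--Levinson extension valid for $r>0$; these pieces agree on their overlap by uniqueness, and each is jointly analytic in $(r,b)$ on its own domain. The blow-up step itself is a standard continuation argument once one has the a priori bound $|\phi'|\le C_1/r_0$ on $[r_0,R_b)$ derived above from the explicit integrating factor $r$.
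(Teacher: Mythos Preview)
Your proof is correct and follows essentially the same strategy as the paper: combine Lemma~\ref{picard} near $r=0$ with standard analytic ODE continuation for $r>0$, and establish the blow-up alternative by bounding $\phi'$ on $[r_0,R_b)$ and invoking local existence at $R_b$. The only cosmetic difference is that the paper reads off the boundedness (and existence of limits) of $\phi$ and $\phi'$ directly from the integral equation~\eqref{inteq}, whereas you obtain the same conclusion via the integrating factor $r$; both routes feed into the identical continuation contradiction.
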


\begin{proof}
The proof is a standard extension argument from the theory of ODE (see \cite{CL}.)  By the definition of $R_b$, for each real value $s\in [0,R_b)$ there exists a neighborhood $N_s$ of $[0,s)$ for which $\phi(r,b)$ is analytic for $r\in N_s$.  By the uniqueness of solutions to \eqref{cauchy} these sets are nested, and in fact $\phi(r,b)$ is analytic in $\mathcal{N}=\cup_{s\in [0,R_b)} N_s$.
Assume $R_b<\infty$, but 
$$  \limsup_{r\to R_b^-\atop r\in\mathcal{N}} |\phi(r,b)|<\infty.
$$
Then, from the integral equation \eqref{inteq}, it is easy to see that $\phi, \phi'$ both have limits as $r\to R_b^-$.   Using these limits as initial conditions for the differential equation, and applying Theorem~8.1 of \cite{CL}, there exists a $\CC$-neighborhood of $R_b$ and an analytic solution of the equation, which extends $\phi(r,b)$.  This contradicts the definition of $R_b$ as the supremum.  By restricting $b$ and $r$ to $\RR$, the complex analytic solution is real analytic on the desired domains.
\end{proof}

We next prove some additional properties of $F(r,R)=\phi(r,b_R)$.  In the following we restrict to $r\in\RR$.

\begin{lem}\label{L123}
Suppose $F(r)$ is a solution of \eqref{cauchy} with $0<F(r)<1$ on $(0,r_0)$.  Then:
\begin{equation}
F''(0)=0; \qquad  \left[{F(r)\over r}\right]'<0; \qquad
  {2F(r)\over r^3}>{F'(r)\over r^2}, \quad r\in (0,r_0).
\end{equation}
\end{lem}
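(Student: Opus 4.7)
The plan is to prove the three claims in order, using the analyticity of $F$ established in Proposition \ref{fanalytic} to control the behavior at $r=0$.

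For the first claim $F''(0)=0$, I would expand $F$ in a Taylor/power series near the origin, $F(r) = br + a_2 r^2 + a_3 r^3 + \cdots$, and substitute into \eqref{cauchy}. A direct computation shows that the linear operator $F'' + r^{-1}F' - r^{-2}F$ applied to this expansion produces $3a_2 + 8a_3 r + 15 a_4 r^2 + \cdots$, while $-(1-F^2)F = -br + O(r^2)$. Matching the constant term forces $3a_2=0$, and hence $F''(0)=2a_2=0$.

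For the monotonicity of $F/r$, the key move is the substitution $g(r) := F(r)/r$. Then $F=rg$, $F'=g+rg'$, and $F''=2g'+rg''$, so equation \eqref{cauchy} transforms into
\begin{equation*}
   r g''(r) + 3 g'(r) = F(r)\bigl(F(r)^2 - 1\bigr).
\end{equation*}
Since $0 < F < 1$ on $(0,r_0)$, the right-hand side is strictly negative, which gives $(r^3 g')' = r^2(rg''+3g') < 0$ on $(0,r_0)$. The first step yields $F(r) = br + O(r^3)$, so $g(r) = b + O(r^2)$ and $g'(r) = O(r)$ as $r \to 0^+$; in particular $r^3 g'(r) \to 0$. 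Integrating the strict inequality $(r^3 g')' < 0$ from $0$ to $r$ then produces $r^3 g'(r) < 0$, and dividing by $r^3$ gives $g'(r) < 0$ throughout $(0, r_0)$, which is exactly $(F/r)' < 0$.

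The third claim falls out of the second. Multiplying $2F/r^3 > F'/r^2$ by $r^3 > 0$ reduces it to $F'(r) < 2F(r)/r$; meanwhile $(F/r)' < 0$ is equivalent to $F'(r) < F(r)/r$. Since $F(r), r > 0$, we have $F(r)/r < 2F(r)/r$, so the previously obtained inequality trivially strengthens to the desired one.

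I expect no serious obstacle: the only subtlety is establishing $r^3 g'(r) \to 0$ at the origin so that integration of $(r^3 g')' < 0$ gives a strict sign, and this is taken care of by the Taylor expansion from step one.
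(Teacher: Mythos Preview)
Your argument is correct and follows essentially the same route as the paper: the substitution $g=F/r$, the identity $(r^3 g')'=r^2\bigl(rg''+3g'\bigr)=r^2 F(F^2-1)<0$, and the observation $r^3 g'(r)\to 0$ at the origin are exactly what the paper uses for the second claim, and the third claim is an immediate consequence in both treatments. The only minor difference is in the first step: you obtain $F''(0)=0$ by matching coefficients in the power series (legitimate, given the analyticity already established), whereas the paper argues via L'H\^opital, writing $\lim_{r\to 0^+}F''(r)=\lim_{r\to 0^+}\frac{rF'(r)-F(r)}{r^2}=\tfrac12\lim_{r\to 0^+}F''(r)$ directly from the equation.
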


\begin{proof}
Since $F(r)/r\to b$ as $r\to 0$, by the equation and L'H\^opital's rule,
$$   \lim_{r\to 0^+} F''(r) = 
 \lim_{r\to 0^+} {r F'(r) - F(r)\over r^2} = \frac12 \lim_{r\to 0^+}F''(r). $$
Hence, $F''(0)=0$.  To verify the second conclusion, let $h(r)=F(r)/r$, and calculate 
$$  {1\over r^3}\left(r^3 h'(r)\right)'=h''(r) + {3\over r} h'(r) = {1\over r} F(r)(F^2(r)-1) <0.  $$
In particular, $r^3 h'(r)$ is strictly decreasing.  Since $r^3 h'(r)|_{r=0}=0$, we have $h'(r)=(F(r)/r)'<0$, as claimed.

Finally, for the third statement, 
$$  {F'\over r^2} - {2F\over r^3} = \left({F\over r^2}\right)'
  = \left({h\over r}\right)' = {h'\over r} - {h\over r^2} <0, $$
by the second conclusion.
\end{proof}

As the boundary-value problem \eqref{bvp} does admit a unique solution for each $R>0$ (obtained as an energy minimizer,)  for each $R>0$, there is a unique choice of $b=b_R>0$ for which $\phi(R,b_R)=1$.  The following facts are well-known, but we sketch the proof here:

\begin{lem}\label{P-lem5}
Let $\phi(r,b_R)$ solve \eqref{bvp}.  Then $b_R>0$,  $0<\phi(r,b_R)<1$ and $\partial_r \phi(r,b_R)\ge 0$ for $r\in (0,R)$.
\end{lem}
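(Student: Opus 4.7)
The plan is to take $F(r) := \phi(r, b_R)$ as the unique solution of the boundary-value problem \eqref{bvp}, which coincides with the minimizer of the radial Ginzburg--Landau energy
\[
E(F) = \int_0^R \Bigl[\tfrac12(F')^2 + \tfrac{1}{2r^2}F^2 + \tfrac14(1-F^2)^2\Bigr]\, r\, dr
\]
over $F$ with $F(0)=0$, $F(R)=1$. Truncation by $\min(|F|,1)$ strictly decreases $E$ unless $0\le F\le 1$ already, so by uniqueness $F\in[0,1]$ on $[0,R]$. To upgrade to strict inequalities on $(0,R)$, I apply the strong maximum principle to the linear form $-F''-F'/r+(1/r^2 - 1 + F^2)F = 0$ (with bounded coefficients on compact subsets of $(0,R]$), ruling out an interior zero of $F$ since $F\not\equiv 0$. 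For the upper bound, set $v = 1-F \ge 0$; at any interior zero $r_0\in(0,R)$ of $v$ one would have $v'(r_0)=0$ and $v''(r_0)\ge 0$, but the ODE satisfied by $v$ evaluates at $r_0$ to $v''(r_0) = -1/r_0^2 < 0$, a contradiction.

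Next, to show $b_R>0$: near $r=0$ the ODE is regular-singular with Frobenius exponents $\pm 1$, and the boundary condition $F(0)=0$ selects the exponent $+1$, so $F(r) = b_R r + O(r^3)$ and $b_R = F'(0)$. Since $F\ge 0$ near zero, $b_R\ge 0$; if $b_R=0$, iterating the Taylor expansion in the ODE forces all coefficients of $F$ at $0$ to vanish, and then analytic continuation via Lemma~\ref{picard} yields $F\equiv 0$ on $[0,R]$, contradicting $F(R)=1$. Hence $b_R>0$.

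For monotonicity $F'\ge 0$, I argue by contradiction. Because $F'(0^+) = b_R>0$, if $F'(r_0)<0$ for some $r_0\in(0,R)$, then $F$ attains a first local maximum at some $r_*\in(0,R)$ with $F'(r_*)=0$ and $F''(r_*)\le 0$. Evaluating the ODE at $r_*$ yields
\[
0\le -F''(r_*) = F(r_*)\bigl[\, 1 - F(r_*)^2 - 1/r_*^2\,\bigr],
\]
and hence $1/r_*^2 + F(r_*)^2\le 1$. Since $F$ decreases just past $r_*$ but must return to $F(R)=1$, there is a subsequent local minimum at some $r_{**}\in(r_*,R)$ with $F(r_{**})<F(r_*)$, at which the same computation gives $1/r_{**}^2 + F(r_{**})^2\ge 1$. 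But $r_{**}>r_*$ and $F(r_{**})<F(r_*)$ together force
\[
1/r_{**}^2 + F(r_{**})^2 \;<\; 1/r_*^2 + F(r_*)^2 \;\le\; 1,
\]
a contradiction. Hence $F'\ge 0$ on $(0,R)$.

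The main obstacle is this monotonicity step: differentiating the ODE and attempting a maximum principle directly on $G=F'$ produces an inhomogeneous term $2F/r^3$ that is singular at the origin and does not have a useful sign, so a one-point argument seems unavailable. The two-point comparison above, which exploits that the auxiliary quantity $1/r^2 + F^2$ must be $\le 1$ at a local maximum and $\ge 1$ at a local minimum of $F$, is what makes the contradiction sharp.
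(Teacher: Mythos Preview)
Your proof is correct and follows essentially the same route as the paper: existence via energy minimization, the bounds $0<F<1$ via truncation and the maximum principle, $b_R>0$ from uniqueness of the Picard fixed point at $b=0$, and the monotonicity via the two-point comparison at a local maximum $r_*$ and a subsequent local minimum $r_{**}$ using the quantity $1/r^2+F^2$ (the paper writes the equivalent inequalities $R_1^{-2}\le 1-F^2(R_1)$ and $R_2^{-2}\ge 1-F^2(R_2)$). The only cosmetic difference is that the paper obtains $F<1$ by applying the maximum principle to $F^2-1$ rather than to $v=1-F$, but this is the same argument.
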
  

\begin{proof} 
The existence of a solution of \eqref{bvp} follows from minimizing the energy,
$$   E(F)= \int_0^R \left[ \frac12 (F'(r))^2 + {1\over r^2} F^2(r)
       + \frac14 (F^2(r)-1)^2\right] r\, dr,  $$
over $F\in H^1((0,R) ; r\, dr)$ with $F(0)=0$, $F(R)=1$.   Since  $E(|F|)=E(F)$, $|F|$ is a minimizer if $F$ is. By the strong maximum principle, we find that   $|F|(r)>0$, and thus $F>0$ in $(0,R]$.  Applying the maximum principle to the equation satisfied by $F^2(r)-1$ we may also conclude that $F(r)<1$ for $r\in (0,R)$. We next prove that this $F$ is  monotone.  First, we claim $b=F'(0)>0$.  Indeed, if $F'(0)=0$, then $F$ is a fixed point of the equation \eqref{inteq} with $b=0$.  Since the solution is unique for each fixed $b$, we must have $F(r)\equiv 0$, which is a contradiction.  Thus, $b>0$. Suppose that $F$ is not monotone.  Then, there exist $0<R_1<R_2<R$ such that $F$ has a local maximum at $R_1$ and a local minimum at $R_2$, with $F(R_1)>F(R_2)$.  Therefore, we have $F'(R_1)=0=F'(R_2)$, and 
$F''(R_1)\le 0\le F''(R_2)$, and so
$$  {F(R_1)\over R_1^2} \le F(R_1)\left(1-F^2(R_1)\right), \qquad
\text{and}\qquad
  {F(R_2)\over R_2^2} \ge F(R_2)\left(1-F^2(R_2)\right).
$$
Consequently, we have
$$  R_1^{-2} \le 1-F^2(R_1) \le 1-F^2(R_2) \le R_2^{-2}, $$
a contradiction.  Hence, $F(r)$ is monotone, and $F'(r)=\partial_r \phi(r,b_R)\ge 0$ for all $r\in (0,R)$.

Finally, uniqueness of the solution of \eqref{bvp} is proved in \cite{M2}.

\end{proof}

We now consider the linearized operator,
$$  Lg(r):= -g'' -\frac1r g' + {1\over r^2} g - (1-3F^2(r,R)) g,
$$
around $F(r,R)=\phi(r,b_R)$, the solution of \eqref{bvp}.

\begin{lem}\label{P-lem4}
Let $\lambda_1$ denote the smallest eigenvalue of $L$ with Dirichlet boundary conditions on $(0,R)$. Then $\lambda_1>0$.
\end{lem}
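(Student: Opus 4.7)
The plan is to prove $\lambda_1>0$ by a Lagrange--identity comparison between the first Dirichlet eigenfunction of $L$ and the profile $F(\cdot,R)$ itself, exploiting the fact that although $F$ fails the Dirichlet condition at $r=R$, it is a strict positive supersolution of $L$ on $(0,R]$. The starting algebraic observation is that since $F$ solves $-F''-\frac1r F'+\frac{1}{r^2}F=(1-F^2)F$, a one-line computation yields
\be\label{LFcomp}
LF = -F''-\frac1rF'+\frac{1}{r^2}F-(1-3F^2)F = 2F^3,
\ee
which is strictly positive on $(0,R]$ by Lemma~\ref{P-lem5}.

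Next I would normalize a lowest eigenfunction $g_1$ of $L$ on $(0,R)$ with Dirichlet conditions so that $g_1>0$ on $(0,R)$ and $g_1(0)=g_1(R)=0$; positivity/simplicity of $\lambda_1$'s eigenspace follows from standard Sturm--Liouville arguments applied to the weighted problem on $((0,R);r\,dr)$. Two boundary facts are then needed: (a) $g_1'(R)<0$ (Hopf lemma, or just the uniqueness for the linear ODE applied to the point $R$), and (b) near $r=0$ the indicial analysis of $Lg=\lambda_1g$ forces the admissible solution to satisfy $g_1(r)=O(r)$, $g_1'(r)=O(1)$, so that $r(g_1'F-g_1F')\to 0$ as $r\to 0^+$ (recall $F(r)\sim b_R r$ near $0$).

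The operator $L$ is formally self-adjoint in $L^2((0,R);r\,dr)$, which gives the Lagrange identity
\be\label{lagrange}
\int_0^R\bigl[(Lu)\,v-u\,(Lv)\bigr]\,r\,dr = -\bigl[r(u'v-uv')\bigr]_0^R
\ee
for any smooth $u,v$. Taking $u=g_1$ and $v=F$ in \eqref{lagrange}, the $r=0$ boundary contribution vanishes by the asymptotics above, while at $r=R$ we have $g_1(R)=0$ and $F(R)=1$, so the right-hand side equals $-R\,g_1'(R)>0$. Substituting $Lg_1=\lambda_1 g_1$ and \eqref{LFcomp} into the left-hand side gives
\be\label{final}
\lambda_1\int_0^R g_1 F\,r\,dr \;=\; 2\int_0^R g_1 F^3\,r\,dr \;-\; R\,g_1'(R).
\ee
The right-hand side is a sum of two strictly positive quantities, and $\int_0^R g_1 F\,r\,dr>0$, so $\lambda_1>0$.

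I do not expect a substantive obstacle here; the only points requiring care are the soft ones, namely the regularity/positivity of $g_1$ and the justification that the boundary term at $r=0$ in \eqref{lagrange} truly vanishes (using the $r\sim 0$ asymptotics of both $F$ and $g_1$ dictated by the singular coefficient $1/r^2$). Everything else is a direct plug-in of \eqref{LFcomp} together with the sign of $g_1'(R)$ coming from the Hopf lemma.
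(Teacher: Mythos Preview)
Your argument is correct. The paper uses the same Lagrange-identity comparison strategy but with a different positive supersolution: instead of $F$ itself, it takes $h:=F'$, computes $Lh=\dfrac{2F}{r^3}-\dfrac{F'}{r^2}$, and invokes Lemma~\ref{L123} to see that this quantity is strictly positive on $(0,R)$; the Lagrange identity then gives $\lambda_1\int_0^R gh\,r\,dr>0$ with no need to isolate the boundary term at $r=R$ explicitly (it has the right sign, or may be dropped). Your choice $v=F$ is more direct in that $LF=2F^3>0$ is a one-line computation requiring no auxiliary lemma; the price is the nonzero boundary term $-R\,g_1'(R)$ at $r=R$ (since $F(R)=1$), which you then handle via the Hopf lemma. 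Both routes are instances of the classical ``positive supersolution forces positive principal eigenvalue'' principle, and yours is marginally more self-contained.
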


\begin{proof}
Let $g\in H^1_0((0,R))$, $g(r)>0$, be an eigenfunction associated to the first eigenvalue $\lambda_1$, $Lg=\lambda_1 g$.
Now, $h:= F'(r)>0$ solves a similar equation, and in fact
$$ Lh = {2F\over r^3}-{F'\over r^2} >0,
$$
by Lemma~\ref{L123}.  Multiplying the equation for $g$ by $rh$, the equation for $h$ by $rg$, and integrating by parts, we have
$$ \lambda_1 \int_0^R g(r)\, h(r)\,r\, dr = 
   \int_0^R \left( {2F\over r^3}-{F'\over r^2}\right)g(r)\, r\, dr  >0.
$$
In particular, $\lambda_1>0$.
\end{proof}

We may now examine the dependence of the solution $\phi(r, b)$ on the shooting parameter $b$:

\begin{lem}\label{P-lem6}
Let $b=b_R$, with $\phi(R,b_R)=1$.  Then $\partial_b \phi(r, b_R)>0$.
\end{lem}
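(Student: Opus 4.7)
The plan is a standard shooting/linearization argument, using the positivity of the first eigenvalue established in Lemma~\ref{P-lem4}. Since Proposition~\ref{extension} gives real-analytic dependence of $\phi(r,b)$ on the shooting parameter $b$, the function $\eta(r):=\partial_b\phi(r,b_R)$ is well defined and smooth on $[0,R]$. Differentiating the Cauchy problem \eqref{cauchy} with respect to $b$, I find that $\eta$ solves the linearized equation
$$  -\eta''-\tfrac1r\eta'+\tfrac1{r^2}\eta-(1-3F^2(r,R))\eta=0,\qquad r\in(0,R),
$$
that is $L\eta=0$, with initial data $\eta(0)=0$ and $\eta'(0)=1$. (Strictly speaking, the analyticity at $r=0$ is inherited from Lemma~\ref{varpar} applied to the differentiated integral equation obtained from \eqref{inteq}.)

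Since $\eta'(0)=1>0$, there is $\delta>0$ with $\eta(r)>0$ on $(0,\delta)$. I will argue by contradiction: suppose $\eta$ has a first zero $r_0\in(0,R]$, i.e.\ $\eta>0$ on $(0,r_0)$ and $\eta(r_0)=0$. Then $\eta$ is a positive solution of $L\eta=0$ on $(0,r_0)$ with Dirichlet conditions $\eta(0)=\eta(r_0)=0$, hence $0$ is a Dirichlet eigenvalue of $L$ on $(0,r_0)$ with a positive eigenfunction, so $0$ must be the smallest Dirichlet eigenvalue on that interval, $\lambda_1(L;(0,r_0))=0$.

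Now the potential $3F^2(r,R)-1$ appearing in $L$ is the same on every subinterval $(0,r_0)\subseteq(0,R]$, so by the variational characterization of the first eigenvalue (extending test functions in $H^1_0((0,r_0))$ by zero to $(0,R)$) one has the domain monotonicity $\lambda_1(L;(0,R))\le\lambda_1(L;(0,r_0))=0$, which contradicts Lemma~\ref{P-lem4}. Therefore $\eta>0$ on $(0,R]$, which is the desired conclusion $\partial_b\phi(r,b_R)>0$.

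The only conceptual step that required work is Lemma~\ref{P-lem4} (comparison of the linearized operator $L$ with the equation satisfied by $F'$ via the pointwise inequality of Lemma~\ref{L123}), and the analytic dependence of $\phi$ on $b$ (Lemma~\ref{picard} and Proposition~\ref{extension}); given those, the remaining argument is routine. The main subtlety to be careful about is verifying that $\eta$ and the integration-by-parts required in the eigenvalue comparison make sense up to $r=0$, which is handled by the $\eta(0)=0$ and the $r\,dr$ measure absorbing the apparent singularity.
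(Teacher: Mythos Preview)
Your proof is correct and follows essentially the same route as the paper: both set $\eta=\partial_b\phi$, derive $L\eta=0$ with $\eta(0)=0$, $\eta'(0)=1$, assume a first zero $r_0\in(0,R]$, and contradict Lemma~\ref{P-lem4}. The only cosmetic difference is that the paper carries out the contradiction by pairing $\eta$ directly against the ground-state eigenfunction $g$ of $L$ on $(0,R)$ via an explicit integration by parts on $(0,r_0)$, whereas you phrase the same step as domain monotonicity of the first Dirichlet eigenvalue; these are equivalent formulations of the same Sturm-type comparison.
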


\begin{proof}
Let $h(r)=\partial_b \phi(r,b)$.  Then we have 
\be\label{heq}
  -h'' - \frac1r h' + {1\over r^2} h = (1-3\phi^2(r,b_R))h, \qquad h(0)=0, \quad h'(0)=1. 
\ee
Suppose that $h$ vanishes somewhere in $(0,R]$.  Let $R_0\in (0,R)$ be the smallest value of $r>0$ for which $h(R_0)=0$.  Since $h'(0)=1$, we must have $R_0>0$ and $h(r)>0$ in $(0,R_0)$.  Choosing $g(r)>0$ the eigenfunction associated to the smallest eigenvalue $\lambda_1$ of $L$, 
$$  -g'' -\frac1r g' + {1\over r^2} g = (1-3\phi^2(r,b_R)) g + \lambda_1 g,
$$
we multiply by $rh$ and integrate by parts on $(0,R_0)$ to obtain:
$$  \lambda_1\int_0^{R_0} g\, h\, r\, dr = 0.  $$
This is impossible, as $g,h>0$ in $(0,R_0)$, and hence $h(r)=\partial_b \phi(r,b_R)>0$ in $(0,R]$.
\end{proof}

\begin{lem}\label{lem7}
$b_R$ is analytic in $R>0$, and $\partial_R b_R<0$.
\end{lem}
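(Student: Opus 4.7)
The plan is to apply the analytic implicit function theorem to the defining relation $\phi(R,b_R)=1$ and then differentiate it to obtain the sign of $b_R'(R)$.

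First, for analyticity, consider the map $G(R,b):=\phi(R,b)-1$. By Proposition~\ref{extension}, $\phi$ is real-analytic in both variables on its domain of existence, so $G$ is real-analytic in a neighborhood of any point $(R,b_R)$. By Lemma~\ref{P-lem6},
$$\partial_b G(R,b_R)=\partial_b\phi(R,b_R)>0,$$
so the analytic implicit function theorem yields a unique real-analytic function $R\mapsto b_R$ on any interval where $b_R$ is defined, satisfying $\phi(R,b_R)\equiv 1$.

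Differentiating this identity with respect to $R$ gives
\begin{equation*}
\partial_r\phi(R,b_R)+\partial_b\phi(R,b_R)\,b_R'(R)=0,
\end{equation*}
hence
\begin{equation*}
b_R'(R)=-\frac{\partial_r\phi(R,b_R)}{\partial_b\phi(R,b_R)}.
\end{equation*}
The denominator is positive by Lemma~\ref{P-lem6}, so it remains to show that $\partial_r\phi(R,b_R)=F'(R)>0$ strictly.

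By Lemma~\ref{P-lem5}, $F=\phi(\cdot,b_R)$ is nondecreasing on $(0,R)$, with $F(r)<1$ on $(0,R)$ and $F(R)=1$, so in any case $F'(R)\ge 0$. Suppose for contradiction that $F'(R)=0$. Then evaluating the ODE \eqref{bvp} at $r=R$ gives
\begin{equation*}
-F''(R)-\frac{F'(R)}{R}+\frac{F(R)}{R^2}=F(R)(1-F(R)^2)=0,
\end{equation*}
so $F''(R)=1/R^2>0$. On the other hand, since $F(r)\le F(R)=1$ for $r$ near $R$ and $F'(R)=0$, the function $F$ attains a local maximum at $R$, forcing $F''(R)\le 0$. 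This contradiction shows $F'(R)>0$, and therefore $b_R'(R)<0$, as claimed. The main (minor) obstacle was simply verifying the strict Hopf-type inequality $F'(R)>0$; all other steps follow directly from the analytic framework already established.
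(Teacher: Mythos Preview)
Your proof is correct and follows essentially the same route as the paper: apply the analytic implicit function theorem to $\phi(R,b)=1$, using Lemma~\ref{P-lem6} for the nondegeneracy condition, and then read off $b_R'(R)=-\partial_r\phi(R,b_R)/\partial_b\phi(R,b_R)$. The paper's proof simply asserts the sign, whereas you supply the missing justification that $F'(R)>0$ strictly; your Hopf-type argument via the ODE at $r=R$ is a nice way to do this (an alternative is to note that $F''(R)>0$ together with $F'(R)=0$ would force $F(r)>1$ for $r$ just below $R$ by Taylor expansion, contradicting Lemma~\ref{P-lem5}).
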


\begin{proof}
Since  $b_R$ is defined as the solution to the equation $\phi(R,b)=1$, and from Lemma~\ref{P-lem6} we have $\partial_b \phi(R, b_R)>0$, applying the analytic version of the Implicit Function Theorem (see \cite{Nir}), we conclude that both 
$\partial_R b_R=-\displaystyle{\partial_R \phi(R,b_R)\over \partial_b \phi(R,b_R)}<0$ and the dependence of $b_R$ on $R$ is real-analytic.
\end{proof}

Proposition~\ref{fanalytic} now follows trivially from Lemma~\ref{extension} and Lemma~\ref{lem7}, as $f_\la(r)=F(\sqrt{\la}r,\sqrt{\la})$, and $F(r,R)=\phi(r,b_R)$ is the composition of analytic maps in a neighborhood of the positive real axis $\la>0$.

\medskip

We may now apply analytic perturbation theory to simple eigenvalues of the operator $\mathcal M_\la$ associated to the quadratic form $Q_\la$.

\begin{lem}\label{step6} Assume that, for some $\la_0>0$ and $n\ge 1$, $\mu_n^{(1)}(\la_0)$ is a simple eigenvalue of $\mathcal{M}_{\la_0}$.  Then:
\begin{enumerate}
\item[(a)]  There exist $\delta,\eta>0$ such that for $\la$ in a (complex) neighborhood $\mathbb{D}_\delta(\la_0)$, the operator $\mathcal{M}_\la$ has exactly one isolated simple eigenvalue $\mu_n^{(1)}(\la)\in \mathbb{D}_\eta(\mu_n^{(1)}(\la_0))$.
\item[(b)]  There exists a normalized eigenvector $(a_0(\cdot;\la),a_2(\cdot;\la))$ of $\mathcal{M}_\la$ with eigenvalue $\mu_n^{(1)}(\la)$, each depending analytically on $\lambda\in \mathbb{D}_\delta(\la_0)$.
\end{enumerate}
\end{lem}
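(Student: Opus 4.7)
The plan is to apply the analytic perturbation theory of Kato, viewing $\{\mathcal{M}_\la\}$ as a self-adjoint holomorphic family of type (A) on the fixed Hilbert space $\mathcal{H}:=L^2((0,1);r\,dr;\RR^2)$. I would decompose $\mathcal{M}_\la = T_0 + V(\la)$, where
$$ T_0\begin{pmatrix} a_0\\a_2\end{pmatrix} = \begin{pmatrix} -a_0''-\tfrac1r a_0'\\ -a_2''-\tfrac1r a_2'+\tfrac{4}{r^2}a_2\end{pmatrix} $$
is $\la$-independent and self-adjoint on a fixed domain $D\subset\mathcal{H}$ (Dirichlet at $r=1$, natural regularity at $r=0$), and
$$ V(\la)\begin{pmatrix} a_0\\a_2\end{pmatrix}= \la\begin{pmatrix} (f_\la^2-1)a_0+\beta f_\la^2(a_0-a_2)\\(f_\la^2-1)a_2-\beta f_\la^2(a_0-a_2)\end{pmatrix} $$
is a bounded symmetric multiplication operator. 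By Proposition~\ref{fanalytic}, $f_\la(r)$ is jointly real-analytic on $[0,1]\times(0,\infty)$, and hence extends holomorphically to a complex neighborhood of $[0,1]\times\{\la_0\}$; a uniform-in-$r$ bound on this extension then makes $\la\mapsto f_\la^2\in L^\infty((0,1))$ holomorphic on some disk $\mathbb{D}_{\delta_1}(\la_0)\subset\CC$. Since $V(\la)$ is bounded and the domain is fixed, $\{\mathcal{M}_\la\}_{\la\in\mathbb{D}_{\delta_1}(\la_0)}$ is a self-adjoint holomorphic family of type (A) in Kato's sense.

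Since $\mu_n^{(1)}(\la_0)$ is a simple isolated eigenvalue of $\mathcal{M}_{\la_0}$, I would choose $\eta>0$ so that the closed disk $\overline{\mathbb{D}_{2\eta}(\mu_n^{(1)}(\la_0))}$ intersects $\sigma(\mathcal{M}_{\la_0})$ only at $\mu_n^{(1)}(\la_0)$. Kato's theorem (Kato, \emph{Perturbation theory for linear operators}, Ch.~VII) then provides $\delta\in(0,\delta_1]$ such that for every $\la\in\mathbb{D}_\delta(\la_0)$ the circle $\Gamma:=\partial\mathbb{D}_\eta(\mu_n^{(1)}(\la_0))$ lies in the resolvent set of $\mathcal{M}_\la$ and encloses exactly one eigenvalue of $\mathcal{M}_\la$, which is moreover simple and depends holomorphically on $\la$. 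This yields part (a).

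For part (b), I would use the Riesz spectral projection
$$ P(\la) := -\frac{1}{2\pi i}\oint_{\Gamma} (\mathcal{M}_\la - z)^{-1}\, dz, $$
which has rank one and depends holomorphically on $\la\in\mathbb{D}_\delta(\la_0)$. Taking a unit eigenvector $e_0$ of $\mathcal{M}_{\la_0}$ at $\mu_n^{(1)}(\la_0)$, the map $\la\mapsto P(\la)e_0$ is holomorphic and nonzero on a (possibly smaller) disk. For real $\la$ near $\la_0$, the normalized vector $(a_0(\cdot;\la),a_2(\cdot;\la)) := P(\la)e_0/\|P(\la)e_0\|_{L^2}$ is then a real-analytic family of unit eigenvectors of $\mathcal{M}_\la$ associated with $\mu_n^{(1)}(\la)$, which is exactly what is claimed.

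The main technical point is verifying that $V(\la)$ is genuinely analytic as a $\mathcal{B}(\mathcal{H})$-valued map, which reduces to extending $f_\la^2(r)$ holomorphically in $\la$ with control that is uniform in $r\in[0,1]$. Both assertions follow at once from Proposition~\ref{fanalytic} (joint real-analyticity on the compact set $[0,1]\times\{\la_0\}$ extends to a joint holomorphic extension in a polydisk neighborhood). After that, the conclusions are a direct citation of Kato's theorem for isolated simple eigenvalues of an analytic family.
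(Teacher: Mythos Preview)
Your proof is correct and follows essentially the same route as the paper: invoke Proposition~\ref{fanalytic} to extend $f_\la$ holomorphically in $\la$, conclude that $\mathcal{M}_\la$ is an analytic (Kato) family that is self-adjoint for real $\la$, and then apply Kato--Rellich. The paper's own argument is terser---it simply asserts that $\mathcal{M}_\la$ is an analytic family in the sense of Kato with compact resolvent and cites Theorem~XII.8 of \cite{RS}---whereas you spell out the type~(A) decomposition $T_0+V(\la)$ and the Riesz-projection construction of the eigenvector, but the underlying mechanism is identical.
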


We remark that, by Lemma~\ref{step4}, the ground state eigenvalue $\mu_1^{(1)}(\la)$ is a simple eigenvalue of $\mathcal{M}_\la$ for each fixed $\la>0$, and so the conclusions of Lemma~\ref{step6} apply to $\mu_1^{(1)}(\la)$ in particular.

\begin{proof}
First, by Proposition~\ref{fanalytic}, $f_\la(r)$ is real-analytic in both $r$ and $\la>0$, and thus may be extended to complex $\la$ as an analytic function of both $r,\la$, for $\la\in \mathbb{D}_{\delta_0}(\la_0)$ for some $\delta_0>0$.

Next, we observe that, for $\la\in \mathbb{D}_{\delta_0}(\la_0)$, $\mathcal{M}_\la$ is an analytic family in the sense of Kato, which has compact resolvent for all $\la\in \mathbb{D}_{\delta_0}(\la_0)$ and is self-adjoint for $\la>0$.    The conclusions {\it (a)} and {\it (b)} of the lemma then follow from the Kato--Rellich Theorem (Theorem XII.8 of \cite{RS}).  
\end{proof}

\subsection*{Monotonicity of simple eigenvalues}

In order to study the dependence on $\la$ of the eigenvalues of the linearized operator $\mathcal{M}_\la(a_0,a_2)$ (defined in \eqref{Msys}), we proceed as in the proof of Lemma \ref{step2}: we replace the dependence on $\la$ by a dependence on the domain $(0,R)$, via a change of variables.  In this way, we define the quadratic form
$$  \hat Q_R(\hat a_0,\hat a_2)= Q^{(1)}_\la(a_0,a_2), \qquad
\text{where} \ \hat a_0(r)= a_0(rR), \ \hat a_2(r)=a_2(rR), \ R=\sqrt{\la}.
$$
The associated operator is then denoted by $\widehat{\mathcal M}_R$.
We observe that the eigenvalues 
$\sigma(\widehat{\mathcal M}_R)=\{\hat\mu_n(R)\}_{n\in\NN}$ (ordered by the min-max principle, and repeated by multiplicity,)
are related to the eigenvalues $\mu_n^{(1)}(\la)$ via 
$\mu_n^{(1)}(\la)=R^2 \hat\mu_n(R)$.

\begin{prop}\label{minmax}
Suppose $\hat\mu_n(R_0)$ is a simple eigenvalue of $\widehat{\mathcal M}_{R_0}$ for some $R_0>0$.  Then $\hat\mu'_n(R_0)<0$.
\end{prop}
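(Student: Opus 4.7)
My plan is to derive a Hadamard-type formula for $\hat\mu_n'(R_0)$ by differentiating the eigenvalue equation in $R$, and then to observe that both terms in this formula are non-positive, with the boundary contribution being strictly negative.

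First, by applying Lemma~\ref{step6} to the family $\widehat{\mathcal M}_R$ (via the analytic change of parameter $\la=R^2$), the simplicity of $\hat\mu_n(R_0)$ provides a real-analytic family of $L^2((0,R);r\,dr)$-normalized eigenvectors $\phi^R=(\hat a_0^R,\hat a_2^R)$ on $(0,R)$ with Dirichlet data $\phi^R(R)=0$.  Write $\dot\phi=\partial_R\phi^R$ and $\phi'=\partial_r\phi^R$.  Differentiating the identity $\phi^R(R)=0$ in $R$ yields the crucial compatibility condition $\dot\phi(R)=-\phi'(R)$, which encodes the motion of the boundary.

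Next, I would differentiate the eigenvalue equation $\widehat{\mathcal M}_R\phi^R=\hat\mu_n(R)\phi^R$ in $R$ and pair with $\phi^R$ in $L^2((0,R);r\,dr)$.  The radial Laplacian (together with the $4/r^2$ term on the second component) is formally self-adjoint, but $\dot\phi$ fails the Dirichlet condition at $r=R$, and integration by parts therefore produces a boundary contribution from the endpoint $r=R$; the endpoint $r=0$ contributes nothing because the boundary behavior there is natural and $R$-independent.  Using $\phi^R(R)=0$ together with $\dot\phi(R)=-\phi'(R)$ collapses this boundary term to $R|\phi'(R)|^2$, and the $\hat\mu_n(R)\langle\dot\phi,\phi^R\rangle$ contributions cancel, yielding the Hadamard formula
\begin{equation*}
\hat\mu_n'(R)=\langle \partial_R \widehat{\mathcal M}_R\,\phi^R,\phi^R\rangle_{L^2((0,R);r\,dr)}-R\,|\phi'(R)|^2.
\end{equation*}

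To conclude, I would estimate each term.  Since the only $R$-dependence of $\widehat{\mathcal M}_R$ is through $F(r,R)^2$, the bulk term is a weighted integral of $2F\,F_R$ against the non-negative expression $(\hat a_0^R)^2+(\hat a_2^R)^2+\beta(\hat a_0^R-\hat a_2^R)^2$, and the pointwise monotonicity $F(\cdot,R')\le F(\cdot,R)$ for $R'\ge R$ (proved in Lemma~\ref{step2}) gives $F_R\le 0$, so this term is $\le 0$.  For the boundary term, ODE uniqueness for the coupled second-order linear system forces $\phi^R\equiv 0$ on $(0,R)$ if both $\phi^R(R)$ and $\phi'(R)$ vanish, so nontriviality of $\phi^R$ forces $|\phi'(R)|^2>0$ and the boundary contribution is strictly negative.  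Adding the two yields $\hat\mu_n'(R_0)<0$.  The main obstacle I expect is the careful identification of the Hadamard boundary term in the presence of both a moving domain and an $R$-dependent potential: one must correctly track signs in the integration by parts and verify the compatibility $\dot\phi(R)=-\phi'(R)$, which is precisely what couples the two sources of $R$-dependence.
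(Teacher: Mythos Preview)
Your approach is correct and genuinely different from the paper's.  The paper proves the proposition via the Courant--Fischer min-max principle: the first $n$ eigenfunctions of $\widehat{\mathcal M}_{R_0}$, extended by zero, are used as test functions on $(0,R)$ for $R>R_0$, and the paper invests considerable effort (the shooting estimates \eqref{N1}--\eqref{N3}) to obtain the quantitative lower bound $F^2(r,R_0)-F^2(r,R)\ge C_3(R-R_0)r^2$, which then yields $\hat\mu_n(R)\le\hat\mu_n(R_0)-C_4(R-R_0)$.  Strict negativity thus comes entirely from the bulk, and no boundary contribution is used.

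Your Hadamard variation formula is cleaner and more direct: you split $\hat\mu_n'(R)$ into a bulk term (nonpositive by the qualitative monotonicity $F_R\le 0$ alone, with no need for the quantitative shooting bounds) and a boundary term $-R|\phi'(R)|^2$, and you extract the strict sign from the boundary via ODE uniqueness at the regular endpoint $r=R$.  This trades the paper's quantitative potential estimate for a simple observation about the Cauchy problem.  One point worth making explicit in a full write-up: the analyticity of $\phi^R$ jointly in $(r,R)$ really does go through the pullback to the fixed interval $(0,1)$ (where Lemma~\ref{step6} applies to $\mathcal M_\la$) and then the rescaling $r\mapsto r/R$; once that is in hand, your compatibility $\dot\phi(R)=-\phi'(R)$ follows cleanly from differentiating $\phi^R(R)=0$, and the vanishing of the $r=0$ boundary terms follows from the indicial behavior $\hat a_0^R=O(1)$, $\hat a_2^R=O(r^2)$ together with the same asymptotics for $\dot\phi$.
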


\begin{proof}
We use the family $\phi(r,b)$ of solutions to the Cauchy problem \eqref{cauchy} above, and recall that $\phi(r,b)$ is real analytic in both $(r,b)$.

Let $R_0>0$ be fixed, and $\eta>0$ given.  We recall that for any $R>0$ there exists a unique $b=b_R>0$ such that the solution to the boundary-value problem \eqref{bvp} is $F(r,R)=\phi(r,b_R)$.  Furthermore, we assert that:
\begin{gather}
\label{a} \text{there exists $\eps>0$ so that $|b_R - b_{R_0}| <\eta$ whenever $|R-R_0|<\eps$;} \\
\label{b} \text{there exists $C_0>0$ so that $b'(R)\le -C_0<0$ whenever
$|R-R_0|<\eps$.}
\end{gather}
Indeed, both follow from the conclusions of Lemma~\ref{lem7} and Proposition~\ref{fanalytic}.

Next, we claim that there exists a constant $C_1>0$ so that
\be\label{N1}
\phi(r,b_R)\ge C_1 \, r,\qquad\text{for all $r\in [0,R],$ $R\in (R_0-\eps,R_0+\eps)$.}
\ee
From statement \eqref{a} above, and the analyticity of $\phi$, there exists $\delta>0$ such that
$$
\phi'(r,b_R) \ge {b_{R_0}\over 2} \ \text{ for all $r\in (0,\delta)$ and $|R-R_0|<\eps.$}
$$
Since $\phi(r, b_R)>0$ for $r>0$ and for all $R>0$, 
$$  k:=\min_{r\in[\delta,R]\atop |R-R_0|\le\eps} \phi(r, b_R) >0. $$
Let $C_1:= \min\left\{ {b_{R_0}\over 2}, {k\over R_0}\right\}$.  Then, 
putting the previous two estimates together we conclude that $F(r,R)=\phi(r,b_R)\ge C_1\, r$, for all $r\in [0,R]$ and $|R-R_0|<\eps$, and the claim \eqref{N1} is established.

The next step involves the derivative $\partial_b \phi(r, b_r)=: h(r,R)$.  By Lemma~\ref{P-lem6}, $h$ is analytic in both $(r,R)$, and $h(r,R)>0$ for all $r\in (0,R]$, $b>0$.  We recall that $h$ solves \eqref{heq}, and $h'(0,R)=1$ for all $R$.  

Following exactly the same arguments used in proving \eqref{N1}, we  obtain that the existence of a constant $C_2>0$ so that
\be\label{N2}
h(r,R)=\partial_b \phi(r,b_R) \ge C_2\, r, \qquad \text{for all $r\in [0,R],$ $R\in (R_0-\eps,R_0+\eps)$.}
\ee
As a consequence of \eqref{N2} and \eqref{b}, we have
\begin{align*}
\partial_R F(r,R) = \partial_R \phi(r, b_R) = 
\partial_b \phi(r, b_R)\, \partial_R b_R 
\le -C_0 C_2 r,
\end{align*}
for all $r\in [0,R]$ and $|R-R_0|<\eps$, using statement \eqref{b} above.
By the mean-value theorem, for any $R\in (R_0,R_0+\eps)$ and for all $r\in [0,R_0]$, there exists $\tilde R\in (R_0,R)$ with
\begin{align}\nonumber
F^2(r,R_0) - F^2(r,R) &= -2 (R-R_0) F(r,\tilde R)\partial_R F(r,\tilde R) \\
&\ge (R-R_0) C_3\, r^2,
\label{N3}
\end{align}
with constant $C_3=2C_0C_1 C_2>0$.

We are now ready to bound the eigenvalue from below.  Let $R\in (R_0, R_0+\eps)$ be fixed.  By Lemma~\ref{step6}, there exists $\delta>0$ for which $\hat\mu_n(R)$ is simple for $|R-R_0|<\delta$, and $\hat\mu_n(R)$ is analytic in that interval.
Let $E_n^0$ denote the linear span of the first $n$ eigenfunctions,
$w_1=(a_0^1,a_2^1),\dots,w_n=(a_0^n, a_2^n)$, of  $\widehat {\mathcal{M}}_{R_0}$, each normalized with $\|w_k\|_{L^2}=1$, $k=1,\dots,n$.  As each $w_k(R_0)=0$, extending their definition by zero for $r>R_0$, each lies in the domain of the the operator $\widehat {\mathcal{M}}_{R}$ for all $R>R_0$.
Thus, by the Courant-Fischer min-max principle, 
\begin{align*}
\hat\mu_n(R) &= \inf_{\dim E=n} \max_{w\in E\atop \|w\|_2=1}
     \hat Q_R (w) \\
&\le \max_{w\in E_n^0\atop \|w\|_2=1}
     \hat Q_R (w)  \\
 &=\max_{w=(a_0,a_2)\in E_n^0\atop \|w\|_2=1}
 \left[
 \hat Q_{R_0}(w) +
   \int_0^{R_0} \left[ F^2(r,R)-F^2(r,R_0)\right] 
     \left[ (a_0^2 +a_2^2) + \beta(a_0-a_2)^2\right] r\, dr 
 \right] \\
 &\le \hat\mu_n(R_0) -  C_3 (R-R_0) \min_{w=(a_0,a_2)\in E_n^0\atop \|w\|_2=1}
          \int_0^{R_0} (a_0(r,R_0)^2 + a_2(r,R_0)^2) \, r^3\, dr \\
 &\le \hat\mu_n(R_0) - C_4 (R-R_0),
\end{align*}
with constant $C_4>0$ independent of $R$,
using \eqref{N3} and the finite dimensionality of $E_n^0$.  Since $\hat\mu(R)$ is isolated and simple in a neighborhood of $R_0$,  by Kato-Rellich it is differentiable at $R_0$.  By the above estimate, we conclude that $\hat\mu'_n(R_0)\le -C_4 <0$.

\end{proof}

 From Lemma~\ref{step4} we thus have:

\begin{cor}\label{muRneg}
Denote by $\hat\mu_1(R)$ the smallest eigenvalue of $\hat Q_R$.  Then $\hat\mu'_1(R)<0$ for all $R>0$.
\end{cor}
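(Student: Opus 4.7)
The plan is to deduce this directly from Proposition~\ref{minmax} together with the simplicity statement in Lemma~\ref{step4}. The key point is that Proposition~\ref{minmax} establishes $\hat\mu_n'(R_0)<0$ for \emph{any} $R_0>0$ at which $\hat\mu_n(R_0)$ is a simple eigenvalue of $\widehat{\mathcal{M}}_{R_0}$. Thus to prove the corollary it suffices to verify that the \emph{smallest} eigenvalue $\hat\mu_1(R)$ is simple for every $R>0$.

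First, I would recall the precise connection between $\widehat{\mathcal{M}}_R$ and $\mathcal{M}_\lambda$: by the change of variables $\hat a_k(r)=a_k(rR)$ with $R=\sqrt\lambda$ used to define $\hat Q_R$, an eigenfunction pair of $\mathcal{M}_\lambda$ with eigenvalue $\mu$ corresponds bijectively to an eigenfunction pair of $\widehat{\mathcal{M}}_R$ with eigenvalue $\mu/R^2$. Hence the spectra satisfy $\hat\mu_n(R)=\mu_n^{(1)}(R^2)/R^2$, and multiplicities are preserved. In particular, the ground-state eigenvalue $\hat\mu_1(R)$ is simple if and only if $\mu_1^{(1)}(R^2)$ is simple.

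Next, I would apply Lemma~\ref{step4}, which states that for every $\lambda>0$ the ground-state eigenvalue $\mu_1^{(1)}(\lambda)$ of $\mathcal{M}_\lambda$ is simple. Transporting this through the scaling above, $\hat\mu_1(R)$ is simple for every $R>0$.

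Finally, fixing an arbitrary $R_0>0$, Proposition~\ref{minmax} applied with $n=1$ gives $\hat\mu_1'(R_0)<0$. Since $R_0>0$ was arbitrary, we conclude $\hat\mu_1'(R)<0$ for all $R>0$, as claimed. There is no real obstacle here beyond ensuring the scaling preserves simplicity, which is immediate from the bijection of eigenfunctions; all the hard analytic work has already been absorbed into Proposition~\ref{minmax} (the monotonicity of $F^2(r,R)$ in $R$ through $b_R$) and Lemma~\ref{step4} (the strict positivity and ordering of the ground-state pair $(a_0,a_2)$).
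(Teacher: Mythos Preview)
Your proof is correct and follows essentially the same approach as the paper, which simply notes ``From Lemma~\ref{step4} we thus have'' before stating the corollary. You have just made explicit the scaling correspondence between $\mathcal{M}_\lambda$ and $\widehat{\mathcal{M}}_R$ that justifies transferring simplicity from $\mu_1^{(1)}(\lambda)$ to $\hat\mu_1(R)$, which the paper leaves implicit.
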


In order to return to the problem on a fixed ball $\mathbb{D}_1$, with parameter $\la=R^2$,  we recall that the eigenvalues of $\mathcal{M}_\la$ and $\widehat{\mathcal{M}}_R$ are related via $\mu_n^{(1)}(\la)=R^2\hat\mu_n(R)$, $\la=R^2$.  So, 
${d\over d\la} \mu_n^{(1)}(\la)= \frac12 R\hat\mu'_n(R) + \hat\mu_n(R)$ is negative at a simple eigenvalue at the point at which $\mu_n(R)=0$ (that is, exactly at a bifurcation point):

\begin{cor}\label{vanish}
Suppose $\mu_n^{(1)}(\la)$ is a simple eigenvalue of $\mathcal{M}_\la$ for $|\la-\la_0|<\delta$, and $\mu_n^{(1)}(\la_0)=0$.  Then, 
${d\over d\la} \mu_n^{(1)}(\la_0)<0$.
\end{cor}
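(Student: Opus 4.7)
The plan is to reduce Corollary~\ref{vanish} to a direct computation using the change of variables $\la=R^2$ that was set up just before the statement, together with the monotonicity result of Proposition~\ref{minmax} and the simplicity that ensures differentiability.

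First I would recall the relation $\mu_n^{(1)}(\la)=R^2\,\hat\mu_n(R)$ with $R=\sqrt{\la}$, which came from rescaling the disk $\mathbb{D}_1$ to $\mathbb{D}_R$ in the definition of $\widehat{\mathcal M}_R$. Since $\mu_n^{(1)}(\la)$ is assumed simple on $(\la_0-\delta,\la_0+\delta)$, Lemma~\ref{step6} gives its real analyticity there; transferring this via the analytic change of variable $R=\sqrt{\la}$ (valid for $\la>0$) shows $\hat\mu_n(R)$ is real analytic and simple near $R_0=\sqrt{\la_0}$, so both derivatives below make classical sense.

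Next I would differentiate. By the chain rule,
\begin{equation*}
\frac{d}{d\la}\mu_n^{(1)}(\la)=\frac{d}{d\la}\bigl(R^2\hat\mu_n(R)\bigr)
=\hat\mu_n(R)+\frac{R}{2}\,\hat\mu'_n(R),
\end{equation*}
where I used $\frac{dR}{d\la}=\frac{1}{2R}$. The hypothesis $\mu_n^{(1)}(\la_0)=0$ together with $R_0^2>0$ forces $\hat\mu_n(R_0)=0$, so the first term drops out and
\begin{equation*}
\frac{d}{d\la}\mu_n^{(1)}(\la_0)=\frac{R_0}{2}\,\hat\mu'_n(R_0).
\end{equation*}

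Finally, since $\hat\mu_n(R_0)$ is simple (being the image of the simple eigenvalue $\mu_n^{(1)}(\la_0)$), Proposition~\ref{minmax} applies and yields $\hat\mu'_n(R_0)<0$. Combined with $R_0>0$, this gives $\frac{d}{d\la}\mu_n^{(1)}(\la_0)<0$, as claimed. There is no real obstacle here beyond making sure the simplicity hypothesis transfers between $\mathcal{M}_\la$ and $\widehat{\mathcal M}_R$ so that the derivatives exist and Proposition~\ref{minmax} is applicable; this is immediate from the fact that the rescaling is an analytic bijection of the spectra on positive $\la,R$.
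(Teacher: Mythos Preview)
Your proof is correct and follows exactly the argument the paper gives in the paragraph immediately preceding the corollary: use the relation $\mu_n^{(1)}(\la)=R^2\hat\mu_n(R)$, differentiate via the chain rule, kill the $\hat\mu_n(R_0)$ term using the hypothesis $\mu_n^{(1)}(\la_0)=0$, and then invoke Proposition~\ref{minmax} for the sign. Your added remarks on analyticity and the transfer of simplicity between $\mathcal{M}_\la$ and $\widehat{\mathcal M}_R$ make the justification of the derivative slightly more explicit than the paper's version.
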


Finally, we show that at least one eigenvalue of $\mathcal{M}_\la$ must cross through zero as $\la$ increases:  the ground state $\mu_1^{(1)}(\la)$.

\begin{lem}\label{step5}  
 There exists a unique $\la_\beta>0$ so that $\mu_1^{(1)}(\la)>0$ for $\la<\la_\beta$ and $\mu_1^{(1)}(\la)<0$ for $\la>\la_\beta$.
\end{lem}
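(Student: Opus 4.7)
The plan is to combine the strict monotonicity of $\hat\mu_1(R)$ from Corollary~\ref{muRneg} with sign calculations at the two endpoints. Since $\mu_1^{(1)}(\la)=\la\,\hat\mu_1(\sqrt\la)$, the two functions share zeros and signs, and since $\hat\mu_1$ is analytic (Lemma~\ref{step6}) and strictly decreasing, it admits at most one zero. Uniqueness of $\la_\beta$ therefore follows automatically, and the problem reduces to showing that $\hat\mu_1$ changes sign from positive to negative on $(0,\infty)$.

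Positivity for small $\la$ is immediate: Proposition~\ref{BBHtype} asserts that $E_\la$ is strictly convex on $\mathcal{B}$ for $\la<\la_\beta^*$, so $L_\la$ is positive definite at $U_\la$, and by Lemmas~\ref{step1}(2) and \ref{step3} the eigenvalue $\mu_1^{(1)}(\la)$ belongs to $\sigma(L_\la)$, giving $\mu_1^{(1)}(\la)>0$.

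The main step, and the anticipated main obstacle, is constructing a direction of instability for $\la$ large. My plan is to use a cut-off translation mode of the entire scalar Ginzburg--Landau vortex. Let $u_\infty(y)=f_\infty(|y|)e^{i\theta}$ denote the degree-one equivariant entire solution of $-\Delta u=(1-|u|^2)u$ in $\RR^2$. The field $v:=\partial_x u_\infty$ lies in the kernel of the linearization of the scalar GL functional on $\RR^2$ by translation invariance, and its Fourier decomposition
\[
v = \tfrac12\bigl(f_\infty'+f_\infty/s\bigr) + \tfrac12\bigl(f_\infty'-f_\infty/s\bigr)e^{2i\theta}
\]
consists exactly of the modes $n=0$ and $n=2$ that drive $\LLL^{(1)}$. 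Setting $v_R:=\chi_R v$ for a smooth radial cut-off satisfying $\chi_R\equiv 1$ on $\{|y|\le R\}$ and $\supp\chi_R\subset\{|y|\le 2R\}$, the distributional identity $-\Delta v+(|u_\infty|^2-1)v+2\langle u_\infty,v\rangle u_\infty=0$ together with the classical decay $|v|=O(|y|^{-1})$ and $|\nabla v|=O(|y|^{-2})$ yields, via a standard commutator computation, the bound $\int_{\RR^2}[|\nabla v_R|^2+(|u_\infty|^2-1)|v_R|^2+2\langle u_\infty,v_R\rangle^2]=O(R^{-2})$ as $R\to\infty$. A direct calculation gives $\langle u_\infty,v\rangle=f_\infty f_\infty'\cos\theta$, so by the decay $f_\infty'=O(r^{-3})$ the number $C_*:=\int_{\RR^2}\langle u_\infty,v\rangle^2\,dy$ is finite and strictly positive. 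Using the identity
\[
\QQQ(w) = G''_\la(u_\la)[w]+2(\beta-1)\la\int \langle u_\la,w\rangle^2\,dx,
\]
valid on $H^1_0({\mathbb D_1};\CC)$, and setting $w_\la(x):=v_R(x\sqrt\la)$ for $\la$ large enough that $\supp w_\la\subset{\mathbb D_1}$, the rescaling $y=x\sqrt\la$ together with the uniform convergence $f_\la(s/\sqrt\la)\to f_\infty(s)$ on compact sets (from Proposition~\ref{fanalytic} and Lemma~\ref{lem7}) produces
\[
\QQQ(w_\la) \;\longrightarrow\; 2(\beta-1)C_*+O(R^{-2}) \quad \text{as } \la\to\infty.
\]
Fixing $R$ large enough that this limit is strictly negative, which is possible because $\beta<1$, forces $\QQQ(w_\la)<0$ for all $\la$ sufficiently large. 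Since $w_\la$ only carries Fourier modes $n=0,2$, it lies in the subspace on which $\LLL_\la$ reduces to $\LLL^{(1)}$, so by the Rayleigh principle combined with Lemmas~\ref{step2} and \ref{step3}, $\mu_1^{(1)}(\la)<0$ for all $\la$ sufficiently large.

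With both sign facts established, the intermediate value theorem applied to the analytic, strictly decreasing $\hat\mu_1$ produces a unique $R_\beta\in(0,\infty)$ with $\hat\mu_1(R_\beta)=0$; setting $\la_\beta:=R_\beta^2$ and reading off signs from $\mu_1^{(1)}(\la)=\la\,\hat\mu_1(\sqrt\la)$ completes the proof.
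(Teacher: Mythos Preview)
Your proof is correct and takes essentially the same approach as the paper: the test function $v=\partial_x u_\infty$ you use is precisely the translation mode the paper employs, which in the paper's coordinates $(A,B)=(\hat a_0-\hat a_2,\hat a_0+\hat a_2)$ reads $A=F'_\infty$, $B=F_\infty/r$. The paper evaluates the quadratic form by appealing directly to the ODE system satisfied by $(A,B)$, obtaining $\breve Q_\infty(A,B)=8\pi(\beta-1)\int_0^\infty F_\infty^2(F_\infty')^2\,r\,dr$, rather than via your identity $\QQQ(w)=G''_\lambda(u_\lambda)[w]+2(\beta-1)\lambda\int\langle u_\lambda,w\rangle^2$ together with the translation-kernel property of $v$; but the underlying computation is identical.
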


\begin{proof}  By Corollary~\ref{muRneg} above, $\hat\mu_1(R)$ is strictly decreasing.  From Corollary~\ref{vanish}, it suffices to show that $\hat\mu_1(R)<0$ for some sufficiently large $R$. Thus $\hat\mu_1(R_\beta)=0$ at a unique $R_\beta>0$,  whence $\mu_1^{(1)}(\la_\beta)$ crosses through zero at a unique $\la_\beta=R_\beta^2$.  
To do this we argue as in Theorem~2 of \cite{M1}.  As $R\to\infty$, the radial profile
$ F(\cdot,R)\to F_\infty(\cdot)$ in $C^k([0,R])$ for all $R>0$ and $k\in\NN$, with $F_\infty$ the modulus of the unique entire equivariant solution of the form $u_\infty=F_\infty(r) e^{i\theta}$.  We have already shown that for $0<\beta<1$, the entire equivariant solution $U_\infty={1\over\sqrt{2}}(u_\infty,u_\infty)$ is {\em not} a local minimizer (in the sense of de Giorgi) in $\RR^2$, so there exists $R>0$ and $\Phi\in C_0^\infty(\mathbb{D}_R)$ for which 
$E(U_\infty+\Phi; \mathbb{D}_R)< E(U_\infty; \mathbb{D}_R)$.  
By an approximation argument, we could then conclude that $E''_R(U_R)[\Phi]<0$ for some $\Phi\in C_0^\infty(\mathbb{D}_R)$, and hence $\mu_1^{(1)}(R^2)<0$ for that value of $R$.

Instead, we give a more direct proof, using $\hat Q_R$.  Let $\hat a_0(r),\hat a_2(r)$ be the ground-state eigenfunctions.  Define $A=\hat a_0-\hat a_2$ and $B=\hat a_0+ \hat a_2$.   In terms of $A,B$ we have:
\begin{align*}  \hat Q_R(\hat a_0, \hat a_2) &=
4\pi\int_0^R\left\{
       (A')^2 + (B')^2 + {2\over r^2}(B-A)^2
           + (F^2-1)(A^2+B^2) + 2\beta A^2\right\} r\, dr  \\
&=:\breve Q_R(A,B).
\end{align*}
We would like to make the choice $A=F'_\infty$ and $B=F_\infty/r$, but  these functions are not admissible as test functions, since they do not vanish at $r=R$.  Nevertheless, since $F_\infty$ vanishes linearly at $r=0$, $A(r)$ and $B(r)$ are regular near $r=0$, and since $A-B=r(F_\infty/r)'$, the second term in $\breve Q_R$ is well-defined.  Moreover, it is well-known (see \cite{HH}) that the  derivatives of $F_\infty$ decay sufficiently rapidly  as $r\to\infty$ in order to have $A$ and $B$ in the domain of definition of $ Q_\infty$, and 
$$  \breve Q_\infty(A,B)= \lim_{R\to\infty} \breve Q_R(A,B).  $$
This last quantity we can evaluate exactly, using the equations which $A,B$ solve (see (18') of \cite{M1}):
$$  \left\{ \begin{aligned}
&-A'' - {1\over r}A' + {2\over r^2} (A-B) - (1-3F_\infty^2)A =0 \\
&-B'' - {1\over r}B' - {2\over r^2} (A-B) - (1-F_\infty^2)B =0
\end{aligned}
\right. .$$
We multiply the first equation by $A$, the second by $B$, add, and integrate by parts to obtain
$$  \breve Q_\infty(A,B) = 8\pi(\beta-1)\int_0^\infty F_\infty^2A^2\, r\, dr
          < 0,  $$
when $0<\beta<1$.  By approximation, we may find a large $R$ and $\breve A, \breve B\in C^\infty(0,R)$ such that 
$\breve Q_R(\breve A, \breve B) = \breve Q_\infty(\breve A, \breve B)<0$. Thus, for sufficiently large $R$, $\mu_1^{(1)}(R^2)= R^2\hat\mu_1(R)<0$.
\end{proof}

\subsection*{Bifurcation at simple eigenvalues}

We are finally ready to prove bifurcation of symmetric solutions to \eqref{DP} at simple eigenvalues of $L_\la$.
The following lemma summarizes the previous results on the eigenvalues of $L_\la$, obtained in the previous parts:

\begin{prop}\label{summary}
Assume $0<\beta<1$.
\begin{enumerate}
\item[(a)]  Suppose $\mu_n(\la_0)$ is a simple eigenvalue of 
$\mathcal{M}_{\la_0}$.  Then:
\begin{enumerate}
\item[(i)] $\mu_n(\la)$ is an eigenvalue of $L_\la$, with eigenspace
$$   \XL = \left\{ s\, W_\xi: \  \ s\in \RR, \ \xi\in \so\right\},$$
with $W_\xi=(w_\xi, -w_\xi)$, $w_\xi = R_\xi\, w_1 =
 \overline{\xi} \, a_0(r;\la,n) - \xi\, a_2(r;\la,n)\, e^{2i\theta}$.
 \item[(ii)]  Both the eigenvalue $\mu_n(\la)$ and normalized eigenvectors are analytic in a (complex) neighborhood of $\la_0$.
 \item[(iii)] If $\mu_n(\la_0)=0$, then $\mu_n'(\la_0)<0$.
\end{enumerate}
\item[(b)]  There exists a unique $\la_\beta>0$ for which the ground state eigenvalue is given by $\mu_1(\la_\beta)=0=\mu_1^{(1)}(\la_\beta)$, and $0$ is a simple eigenvalue of 
$\mathcal{M}_{\la_\beta}$.
\end{enumerate}
\end{prop}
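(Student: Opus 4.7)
The plan is to assemble the claim from the spectral reductions and monotonicity results already established in this section. For part (a)(i), let $\mu:=\mu_n(\la_0)$ and let $(a_0,a_2)$ span $\ker(\mathcal{M}_{\la_0}-\mu I)$. Lemma~\ref{step3} identifies $\mu$ as a simple eigenvalue of $\LLL^{(1)}$ whose kernel equals $\{t(\bar\xi a_0,-\xi a_2):t\in\RR,\xi\in\so\}$. Pulling this back through the Fourier decomposition \eqref{direct} yields the scalar eigenfunctions $w_\xi(r,\theta)=\bar\xi a_0(r)-\xi a_2(r)e^{2i\theta}$, and a direct unpacking of $(R_\xi\psi)(z)=\bar\xi\psi(\xi z)$ shows $w_\xi=R_\xi w_1$. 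Lemma~\ref{step1}(2), combined with the positive definiteness of $\LGL$ from \cite{M1} (which excludes $\mu\in\sigma(\LGL)$ in the cases $\mu\le 0$ of primary interest here), then yields the corresponding eigenvectors of $L_\la$ in the symmetric form $W_\xi=(w_\xi,-w_\xi)$, producing $\XL$ as described.

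For (a)(ii), I would apply Lemma~\ref{step6} to obtain analytic dependence of $\mu_n(\la)$ and of a normalized eigenvector $(a_0(\cdot;\la),a_2(\cdot;\la))$ of $\mathcal{M}_\la$ in a complex neighborhood of $\la_0$; the identifications above then transfer this analyticity to a branch of eigenvectors of $L_\la$, with the $\xi\in\so$ parameter representing the group action rather than a branch choice. For (a)(iii), Corollary~\ref{vanish} gives the sign $\mu_n'(\la_0)<0$ at a simple zero crossing immediately.

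For part (b), Lemma~\ref{step4} establishes the simplicity of $\mu_1^{(1)}(\la)$ for every $\la>0$, which handles the simplicity assertion at $\la_\beta$. Lemma~\ref{step5} furnishes a unique $\la_\beta>0$ at which $\mu_1^{(1)}$ changes sign, and hence $\mu_1^{(1)}(\la_\beta)=0$ by continuity of the simple eigenvalue (from Lemma~\ref{step6}). It remains to identify this zero with $\mu_1(\la_\beta)$, i.e.\ with the ground state of $L_{\la_\beta}$. By Lemma~\ref{step1}(2), every eigenvalue of $L_\la$ belongs to $\sigma(\LGL)\cup\sigma(\LLL)$; the operator $\LGL$ is positive definite by \cite{M1}; the complement $\tilde\LLL$ satisfies $\inf\sigma(\tilde\LLL)\ge m_0(\la)>0$ by Lemma~\ref{step2}; and the remaining summand $\LLL^{(1)}\cong\mathcal{M}_\la$ has smallest eigenvalue $\mu_1^{(1)}(\la)$. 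Thus for $\la<\la_\beta$ all three inputs are strictly positive and $\mu_1(\la)>0$, while at $\la=\la_\beta$ we have $0=\mu_1^{(1)}(\la_\beta)\in\sigma(L_{\la_\beta})$, whence $\mu_1(\la_\beta)=0$ by continuity.

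The main obstacle is the careful bookkeeping across the chain $\mathcal{M}_\la\leftrightarrow\LLL^{(1)}\subset\LLL\leftrightarrow L_\la$: in particular, verifying that the identification $w_\xi=R_\xi w_1$ correctly matches the $\so$-symmetry of the ambient problem with the Fourier-mode shift $(b_{n+1},b_{1-n})$ appearing in \eqref{direct}, and that ``simple modulo $\so$'' is preserved at each reduction so that the analytic perturbation theory of Lemma~\ref{step6} applies to the eigenvector branches of $L_\la$.
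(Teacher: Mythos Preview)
Your proposal is correct and follows essentially the same route as the paper: both proofs simply assemble the earlier lemmas, with (a)(i) coming from Lemmas~\ref{step1} and \ref{step3} via the decomposition \eqref{direct}, (a)(ii) from Lemma~\ref{step6}, (a)(iii) from Corollary~\ref{vanish}, and (b) from Lemmas~\ref{step4} and \ref{step5}. The only cosmetic difference is that for (b) the paper invokes Corollary~\ref{cor1} directly, whereas you unpack its content (Lemmas~\ref{step1} and \ref{step2} plus the positivity of $\LGL$); your caveat about excluding $\mu\in\sigma(\LGL)$ only for $\mu\le 0$ is a fair acknowledgment of a point the paper leaves implicit.
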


\begin{proof}
Statement {\it (i)} follows from the reductions in Lemma~\ref{step1} and \eqref{direct}, together with the description of the eigenspaces of $\LL_\la^{(1)}$ in Lemma~\ref{step3}.  The analyticity claimed in {\it (ii)} was proven in Lemma~\ref{step6}, and (iii) follows from Corollary~\ref{vanish}.  Part {\it (b)} puts together the results of Lemmas~\ref{step4} and \ref{step5} with that of Corollary~\ref{cor1}.
\end{proof}

\medskip

We are ready to  analyze the bifurcation of solutions at isolated simple eigenvalues of $\mathcal{M}_\la$ which cross zero at $\la_0>0$.
We write 
$\Psi\in \HH$ as $\Psi=U_\lambda + V$ with
$V=(v_+,v_-)\in H^1_0({\mathbb D_1};\CC^2)$.  Then, $\Psi$ solves \eqref{DP} if and only if
\be\label{FV}
 0= F(V,\lambda):= L_\lambda V + \lambda H(V,\lambda) =
     \left[ \begin{matrix}
L_\lambda^+ V + \lambda H^+(V,\lambda)  
   \\
L_\lambda^- V + \lambda H^-(V,\lambda) 
\end{matrix}\right],
\ee
with
\begin{equation}\label{Hdef}
\left.
\begin{aligned}
H^+(V,\lambda) &= (|v_+|^2+|v_-|^2)(v_+ +{u_\lambda\over\sqrt{2}})
      + 2\llan{u_\lambda\over\sqrt{2}},v_+ + v_-\rran v_+
      \\
      &\qquad
      +\beta\left(|v_+|^2-|v_-|^2\right) 
                (v_+ + {u_\lambda\over\sqrt{2}})
          + 2\beta \llan{u_\lambda\over\sqrt{2}}, v_+ - v_-\rran v_+
          \\
          H^-(V,\lambda)&=
          (|v_+|^2+|v_-|^2)(v_- +{u_\lambda\over\sqrt{2}})
      + 2\llan {u_\lambda\over\sqrt{2}},v_+ + v_-\rran v_-
      \\
      &\qquad
      -\beta\left(|v_+|^2-|v_-|^2\right) 
                (v_- + {u_\lambda\over\sqrt{2}})
          - 2\beta \llan{u_\lambda\over\sqrt{2}}, v_+ - v_-\rran v_-
\end{aligned}
\right\}.
\end{equation}
The above defines a smooth map $F: \ H^1_0({\mathbb D_1}; \CC^2)\times \RR^+\to H^{-1}({\mathbb D_1}; \CC^2)$.  If we identify a complex vector 
$V=(v_+,v_-)=(v^1_+ + i v^2_-, \, v^1_- + i v^2_-)\in \CC^2$ with the real vector $(v^1_+ , v^2_+, \, v^1_- , v^2_-)\in\RR^4$, then by Proposition~\ref{fanalytic} we recognize that the symmetric solution $U_\la$ is real-analytic as an $\RR^4$-valued function of $(x,\la)$, and the map $F$ is likewise real-analytic, viewed as a map of $V\in H^1_0(\ddd;\RR^4)$ and $\la$.
 We also note that
$H_V(0,\lambda) = 0$, and so $F_V(0,\la)=L_\la$, the linearization around the symmetric vortex solution $U_\la$.

We now state our bifurcation result.  Denote by $\XL^\perp$ the orthogonal complement of $\XL$ in $H^1_0(\mathbb{D}_1;\CC^2)$.

\begin{thm}\label{bifthm}  Suppose $\la_0>0$ is such that, for some $n\ge 1$, $\mu_n^{(1)}(\la_0)$ is a simple eigenvalue of $\mathcal{M}_{\la_0}$ with $\mu_n^{(1)}(\la_0)=0$. Then $(U_{\lambda_0},\lambda_0)$ is a point of bifurcation for the equations \eqref{DP}.  In particular:
\begin{enumerate}
\item[(1)]  there exists a neighborhood $\mathcal{N}$ of 
$(U_{\lambda_0},\lambda_0)$ in $\HH\times (0,\infty)$, $\delta>0$, and real analytic maps $\Phi: \ (-\delta,\delta)\to \mathcal{X}_{\la_0}^\perp$ and $\phi: \ (-\delta,\delta)\to (0,\infty)$ with $\Phi(0)=0$, $\phi(0)=\la_0$, such that there exists a non-equivariant solution 
 \eqref{DP} of the form
$$  \Psi (t,\xi) = U_{\phi(t)} + t R_\xi(W_1+ \Phi(t)), \quad \la=\phi(t) $$
for all $|t|<\delta$ and $\xi\in \mathbb{S}^1$.  Moreover, $\Psi (-t,\xi)=R_{-1}\Psi (t,\xi)$ and $\phi(-t)=\phi(t)$.
\item[(2)]  Any solution $(\Psi,\la)$ of \eqref{DP} in the neighborhood $\mathcal{N}$ is either an equivariant solution $(U_\la,\la)$ or of the form $(\Psi (t,\xi),\phi(t))$ above.
\item[(3)]  All solutions of \eqref{DP} in $\mathcal{N}$ satisfy $T\Psi=\Psi$ (where the involution $T$ is defined in \eqref{T}.)  Moreover, each component $\psi_\pm(t,\xi)$ of  $\Psi (t,\xi)=(\psi_+(t,\xi),\psi_-(t,\xi))$ has exactly one zero, and their zeros are antipodal and distinct from the origin.
\end{enumerate}
\end{thm}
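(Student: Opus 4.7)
The plan is to carry out an $\so$-equivariant Lyapunov--Schmidt reduction of the analytic map
$F(V,\la)=L_\la V+\la H(V,\la)$
from \eqref{FV}--\eqref{Hdef}, where $V=\Psi-U_\la\in H^1_0(\mathbb D_1;\CC^2)$; the real-analyticity of $F$ in both variables is inherited from Proposition~\ref{fanalytic}. By Proposition~\ref{summary}(a), $\ker L_{\la_0}=\XXo$ is 2-real-dimensional with basis $\{W_1,W_i\}$ (where $W_i:=R_iW_1$), and $L_{\la_0}\colon\XXo^\perp\to\XXo^\perp$ is an isomorphism. The identity $R_\xi U_\la=U_\la$ gives the $\so$-equivariance $F(R_\xi V,\la)=R_\xi F(V,\la)$, which is the crucial tool for collapsing the 2D kernel to an effective 1-parameter bifurcation.

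Decomposing $V=tW_1+sW_i+\eta$ with $\eta\in\XXo^\perp$ and projecting $F=0$ onto $\XXo^\perp$, the analytic IFT yields a unique real-analytic $\eta=\eta(t,s,\la)$ near $(0,0,\la_0)$ with $\eta(0,0,\la_0)=0$. Introducing $\zeta=t+is\in\CC$ and the reduced bifurcation function $g(\zeta,\la):=\llan F,W_1\rran+i\llan F,W_i\rran$ (evaluated at $V=tW_1+sW_i+\eta(t,s,\la)$), the $\so$-equivariance combined with the uniqueness of $\eta$ gives $g(e^{i\alpha}\zeta,\la)=e^{i\alpha}g(\zeta,\la)$, forcing
\begin{equation*}
g(\zeta,\la)=\zeta\,h(|\zeta|^2,\la)
\end{equation*}
for a real-analytic $h$ with $h(0,\la_0)=0$. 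The transversality $h_\la(0,\la_0)\neq 0$ follows from Corollary~\ref{vanish} and the spectral reductions in Lemmas~\ref{step1} and \ref{step3}, which identify $h_\la(0,\la_0)$ with a positive multiple of $\mu_n^{(1)\prime}(\la_0)$. The analytic IFT solves $h(r,\la)=0$ as $\la=\phi_0(r)$ for $r\geq 0$ small, and writing $\zeta=te^{i\alpha}$, $\xi=e^{i\alpha}\in\so$, $\phi(t):=\phi_0(t^2)$, $\Phi(t):=\eta(t,0,\phi(t))/t$ (analytic, vanishing at $0$), we obtain the family $\Psi(t,\xi)=U_{\phi(t)}+t R_\xi(W_1+\Phi(t))$. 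The parity $\phi(-t)=\phi(t)$ is automatic from the $t^2$-dependence, and IFT uniqueness yields both (1) and (2).

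The remaining assertions of (3) reduce to symmetry arguments and Taylor expansion. The involution $T$ commutes with $F$ (since $TU_\la=U_\la$ and $T$ preserves both the boundary data and the equations) and a direct check gives $TW_1=W_1$, $TW_i=W_i$, so $T$ preserves both $\XXo$ and $\XXo^\perp$; by IFT uniqueness $T\eta=\eta$ and hence $T\Psi(t,\xi)=\Psi(t,\xi)$. The relation $\Psi(-t,\xi)=R_{-1}\Psi(t,\xi)$ follows from $R_{-1}W_1=-W_1$ together with IFT uniqueness. For the zeros, the leading-order expansion at $(t,1)$ reads
\begin{equation*}
\psi_\pm(x;t,1)=\tfrac{1}{\sqrt{2}}f_{\la_0}(r)e^{i\theta}\pm t\bigl(a_0(r)-a_2(r)e^{2i\theta}\bigr)+O(t^2),
\end{equation*}
and $f_{\la_0}(0)=0$, $f'_{\la_0}(0)>0$, $a_0(0)>0$, $a_2(r)=O(r^2)$ (Lemma~\ref{step4}) render the Cartesian linearization of $\psi_+=0$ invertible at the origin, so the IFT produces a unique nearby zero $z_+(t)$ with $|z_+(t)|\sim |t|$ and $z_+(t)\neq 0$. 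The $T$-invariance $\psi_-(x)=-\psi_+(-x)$ then yields $z_-(t)=-z_+(t)$. Uniform $C^0_{\mathrm{loc}}$ convergence of $\Psi$ to $U_{\la_0}$ on compact subsets of $\mathbb D_1\setminus\{0\}$ rules out further zeros; for general $\xi$ apply $R_\xi$. The principal technical obstacle is the rigorous verification of the $\so$-equivariance-induced factorization $g(\zeta,\la)=\zeta h(|\zeta|^2,\la)$ and the clean identification of $h_\la(0,\la_0)$ with a positive multiple of $\mu_n^{(1)\prime}(\la_0)$, which combines the reductions in Lemmas~\ref{step1}--\ref{step4} with Corollary~\ref{vanish}.
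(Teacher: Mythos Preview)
Your approach is correct in spirit and reaches the same conclusion, but it is organized differently from the paper and contains one point that needs to be completed.

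\textbf{Comparison of strategies.} The paper does \emph{not} carry out a full two-dimensional equivariant Lyapunov--Schmidt reduction. Instead it first passes to the ``slice'' subspace $\mathcal Y=\RR W_1\oplus\XXo^\perp\subset H^1_0(\ddd;\CC^2)$, observes that every $\so$-orbit of solutions meets $\mathcal Y$, and then applies the Crandall--Rabinowitz theorem directly in $\mathcal Y$, where the kernel of $F_V(0,\la_0)=L_{\la_0}$ is one-real-dimensional. The transversality hypothesis of Crandall--Rabinowitz, namely $F_{V,\la}(0,\la_0)W_1\notin\mathrm{Ran}\,L_{\la_0}$, is checked via the identity $\mu_n'(\la_0)=\llan W_1,F_{V,\la}(0,\la_0)W_1\rran$ together with Corollary~\ref{vanish}. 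Your route---full Lyapunov--Schmidt onto the two-dimensional kernel, followed by the factorization $g(\zeta,\la)=\zeta\,h(|\zeta|^2,\la)$---is the standard alternative and is equally valid; it has the mild advantage of making the role of the $\so$-symmetry completely explicit, while the paper's approach is shorter because Crandall--Rabinowitz is invoked as a black box.

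\textbf{The gap.} The $\so$-equivariance $g(e^{i\alpha}\zeta,\la)=e^{i\alpha}g(\zeta,\la)$ by itself only yields $g(\zeta,\la)=\zeta\,h(|\zeta|^2,\la)$ with $h$ \emph{a priori} complex-valued; your subsequent application of the implicit function theorem to solve $h(r,\la)=0$ for $\la=\phi_0(r)$ tacitly assumes $h$ is real. This does hold, but you must say why: either (i) invoke the variational structure, so that after reduction $g=\nabla_\zeta\widetilde E$ for an $\so$-invariant reduced energy $\widetilde E=\widetilde E(|\zeta|^2,\la)$, whence $h=2\,\partial_r\widetilde E\in\RR$; or (ii) use the additional reflection symmetry $C\Psi(z):=\overline{\Psi(\bar z)}$, which fixes $U_\la$ and $W_1$, sends $W_i\mapsto-W_i$, commutes with $F$, and forces $g(\bar\zeta,\la)=\overline{g(\zeta,\la)}$, hence $h$ real. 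Either argument is short, but one of them must be supplied.

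\textbf{Part (3).} For the location of the zeros the paper argues more softly: $C^2$-closeness to $U_{\la_0}$ gives exactly one (nondegenerate) zero per component, $T$-invariance gives antipodality, and Proposition~\ref{unique2} is then invoked to exclude the possibility that both zeros sit at the origin. Your Taylor-expansion argument is a legitimate alternative and in fact yields the sharper information $|z_\pm(t)|\sim|t|$, bypassing Proposition~\ref{unique2} altogether.
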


By {\it (b)} of Proposition~\ref{summary}, we may apply Theorem~\ref{bifthm} at $\la=\la_\beta$, and obtain bifurcation at the ground state eigenvalue $\mu_1(\la_\beta)=0$, which implies Theorem~\ref{thm3} stated in the Introduction.

\begin{proof}
Given any $V\in H^1_0(\ddd;\CC^2)$, there exists a unique $\alpha>0$,  $\xi\in\so$, and $\tilde Z\in \XXo^\perp$ with 
$V=\alpha W_\xi + \tilde Z= R_\xi\, (\alpha W_1 + Z)$, with $Z=R_{\overline\xi}\tilde Z\in\XXo^\perp$ and $W_1=W_1(\la)$ as in {\it (i)}  of Proposition~\ref{summary}, normalized with $\|W_1\|_{L^2}=1$.  Define the Hilbert space
$\mathcal{Y}= \{\alpha W_1\}\otimes \XXo^\perp$,
as a subspace of $H^1_0(\ddd;\CC^2)$.  We then consider the equation $F(V,\la)=0$ as in \eqref{FV}, restricted to $V\in\mathcal{Y}$.  Since $F$ is equivariant under the $R_\xi$ action, every solution $V\in \mathcal{Y}$ gives rise to an orbit of solutions in $H^1_0(\ddd;\CC^2)$, while any solution $V\in H^1_0(\ddd;\CC^2)$ corresponds to a solution $R_\xi V\in\mathcal{Y}$ by an appropriate choice of $\xi\in\so$.  Thus, it suffices to consider the equation \eqref{FV} in the smaller space $\mathcal{Y}$ to determine the solution space in $H^1_0(\ddd;\CC^2)$.

We next list some properties of $F$ restricted to $\mathcal{Y}$.  First, it remains true that $F(0,\la)=0$ for all $\la>0$, and $F$ is a real analytic function of $V\in\mathcal{Y}$, $\la>0$ (by Proposition~\ref{summary} {\it (ii)}  and Proposition~\ref{fanalytic}, thinking of $U_\la,V\in \CC^2$ as real vectors in $\RR^4$.)  By the restriction to $\mathcal{Y}$, we have
$F_V (0,\la_0) = L_{\la_0}$ with $\text{ker}_{\mathcal{Y}}(L_{\la_0}) = \{\alpha W_1\}$, and thus $\dim\text{ker}_{\mathcal{Y}}(L_{\la_0})=1=\text{codim}\, \text{Ran}(L_{\la_0})$.  Lastly, we calculate the derivative $\mu'_n(\la_0)$ in terms of the function $F$:  as $\mu_n(\la)=\langle W_1(\la), F_V(0,\la)\, W_1(\la)\rangle,$ by {\it (iii)}  of Proposition~\ref{summary} we have
\begin{align} \nnn
0>\mu'_n(\la_0) &= 2\langle W'_1(\la_0), \, F_V(0,\la_0) W_1(\la_0)\rangle
    + \langle W_1, F_{V,\la}(0,\la_0) W_1\rangle  \\
    & = \langle W_1, F_{V,\la}(0,\la_0) W_1\rangle, \label{CRcond}
\end{align}
as $F_V(0,\la_0)W_1=L_{\la_0}W_1=0$ (since $\mu_n(\la_0)=0$).  
We now claim that $F_{V,\la}(0,\la_0)W_1(\la_0)\not\in\text{Ran}_{\mathcal{Y}}\,(L_{\la_0})$.  Indeed, assume the contrary, so there exists $X\in\mathcal{Y}$ with $F_V(0,\la_0)X= F_{V,\la}(0,\la_0)W_1(\la_0)$, and take the scalar product with $W_1(\la_0)$.  We have
$$  0>\mu'_n(\la_0)= \langle W_1(\la_0), F_V(0,\la_0) X\rangle = 0, $$
a contradiction.

The celebrated Crandall--Rabinowitz bifurcation theorem (Theorem~1.7 of \cite{CR}) may then be applied to $F$ in the space $\mathcal{Y}$.  We note that since $F$ is an analytic map, by invoking the analytic version of the Implicit Function theorem in the proof of \cite{CR} the maps obtained will be real analytic.  We conclude that there exists a neighborhood $\tilde{\mathcal{N}}$ of $(0,\la_0)$ in $\mathcal{Y}\times(0,\infty)$, $\delta>0$, and real analytic maps $\Phi$, $\phi$ as in the statement of the theorem, such that 
$$   F^{-1}\{0\} \cap \mathcal{\tilde N} =
  \biggl\{ (V^t,\la)=\left( t[W_1(\la_0)+\Phi(t)], \phi(t)\right): \ |t|<\delta\biggr\} \cup \left\{ (0,\la)\in \tilde{\mathcal{N}}\right\}.  $$
Define the neighborhood $\mathcal{N}\in H^1_0(\ddd;\CC^2)$ as the union of the images of $\mathcal{\tilde N}$ under the action of $R_\xi$, $\xi\in \so$. 
The characterization of the solution set in $\mathcal{N}$ in statements {\it (1)}  and {\it (2)}  then follows.

It remains to verify the symmetry results in {\it (1)}  and {\it (3)}.  First, we note that if $(V^t,\phi(t))$ is a solution in $\mathcal{\tilde N}$ of the above form, then
$$  \tilde V:= R_{-1}V^t = (-t)(W_1 -\Phi(t))\in \mathcal{\tilde N}\subset\mathcal{Y} $$
is also a non-equivariant solution with $\la=\phi(t)$.  Therefore, there exists $s$, $|s|<\delta$ for which $(\tilde V,\phi(t))=(V^s,\phi(s))$.
Since $W_1\perp \XXo^\perp$, we have
$$   (s+t) W_1 = s\Phi(s) - t\Phi(t) = 0, $$
and hence $s=-t$ and $\Phi(s)=\Phi(-t)=-\Phi(t)$.  We conclude that $\phi(-t)=\phi(t)$ and $V^{-t}=R_{-1}V^t$, which finishes the proof of {\it (1)}.

Next, we show $TV^t=V^t$ for the involution $T$.  As above, $TV^t$ is also a solution with the same $\la=\phi(t)$, in the neighborhood $\mathcal{\tilde N}$.  We note that $TW_1=W_1$, and by following the same arguments as above, $T\Phi(t)=\Phi(t)$.  Since $TU_\la=U_\la$, we conclude $T\Psi (t,\xi)=\Psi (t,\xi)$ for the whole family of solutions.

Finally, we consider the zero set of each component of $\Psi (t,\xi)$.  
Since $\Psi (t,\xi)\to U_{\la_0}$ in $C^2$ as $t\to 0$, and the equivariant solutions $U_{\la_0}$ have exactly one non-degenerate zero (the origin) in each component, the same must be true for $\Psi (t,\xi)$ for $|t|$ sufficiently small.  Since $\Psi (t,\xi)$ is fixed by the involution $T$, the zeros of the components $\psi_+(t,\xi)$, $\psi_-(t,\xi)$ must be antipodal.  By 
Proposition~\ref{unique2}, only the equivariant solution $U_\la$ vanishes in both components at the origin, so the zeros of $\psi^t_\pm$ must be antipodal and distinct.
This concludes the proof of Theorem~\ref{bifthm}.
\end{proof}

\begin{rem}\label{CR}\rm
(a) \ Given that the solution curves are analytic, we may expand them around the bifurcation point $\la_0$ and (in principle) obtain further information about the direction and stability of the bifurcating solutions.  For instance, we may calculate higher derivatives of $\la=\phi(t)$, and obtain $\la'=\phi'(0)=0$, and 
$$  \lambda''=\phi''(0) = {\displaystyle\int_{\ddd} \langle L_{\lambda_\beta} V''(0),  V''(0)\rangle - 2\lambda_\beta\int_{\ddd} |W_1|^4 \over \mu'(\lambda_\beta)}. $$
Since $L_{\lambda_\beta}$ is positive definite on $\XXo^\perp$, the  sign of the numerator is not clear {\it a priori}, so numerical approximation may be necessary to determine the details of the bifurcation at $\lambda_\beta$.

\medskip
\noindent
(b) \ In a similar vein, if we compute the quantity in \eqref{CRcond} directly, we obtain
$$  \int_{\ddd} \llan F_{V,\lambda}(0,\lambda_0)W , W \rran  
  = 4\pi \int_0^1 \left[
  \partial_\lambda\left( \lambda (f_\la^2-1)\right) [a_0^2 + a_2^2]
    + \partial_\lambda\left(\lambda f_\la^2\right) [a_0-a_2]^2
  \right] r\, dr \biggr|_{\lambda=\lambda_0}.
$$
Expressed in this form, it is not apparent whether this quantity is non-zero.  Only by recognizing the connection to the derivative $\mu'_n(\la)$ are we able to apply the Crandall-Rabinowitz theorem.

\noindent
(c) \  While we know (from Lemma~\ref{step5}) that the ground state eigenvalue $\mu_1(\la)$ must cross zero for $\beta\in (0,1)$, it is unclear whether any of the higher eigenvalues can lead to other bifurcations of the symmetric solutions.

\medskip

\noindent
(d) \ A very general result by Rabinowitz \cite{R} shows that bifurcation always occurs at eigenvalues of any finite multiplicity in a variational problem.  Although the form of equation assumed in \cite{R} is somewhat different than our $F(V,\la)$, the result nevertheless holds true in our setting, although the conclusions of the bifurcation theorem are weaker than the statement obtained by using the simplicity of the eigenspace as in \cite{CR}.  In particular, one may conclude that non-equivariant solutions exist in any neighborhood of $U_{\la_0}$ when $0=\mu_n(\la_0)$ is a degenerate eigenvalue, and by analyticity (see \cite{Loj}) the continua of solutions form finitely many analytic curves, but there is no complete characterization of the solution set as in {\it (1)}, {\it (2)} of Theorem~\ref{bifthm}. 
\end{rem}

\end{document}